\newtheorem{theorem}{Theorem}
\newtheorem{lemma}{Lemma}
\newtheorem{remark}{Remark}
\newtheorem{example}{Example}
\newtheorem{proposition}{Proposition}
\newtheorem{corollary}{Corollary}
\newcommand{\NN}{\mathbb{N}}
\newcommand{\RR}{\mathbb{R}}
\newcommand{\CC}{\mathbb{C}}
\newcommand{\LL}{\mathbb{L}}
\newcommand{\cB}{\mathcal{B}}
\newcommand{\cP}{\mathcal{P}}
\newcommand{\cN}{\mathcal{N}}
\newcommand{\cE}{\mathcal{E}}
\newcommand{\cL}{\mathcal{L}}
\newcommand{\var}{\mathrm{Var}\,}
\newcommand{\supp}{\mathrm{supp}\,}
\newcommand{\bN}{\mathbb{N}}
\newcommand{\bR}{\mathbb{R}}
\newcommand{\bZ}{\mathbb{Z}}
\newcommand{\bC}{\mathbb{C}}
\newcommand{\law}{\mathcal{L}}
\begin{document}

\title{On the range of exponential functionals of L\'evy processes}
\author{Anita Behme\thanks{Technische Universit\"at Dresden,
Institut f\"ur Mathematische Stochastik, D-01062 Dresden, Germany and  Technische Universit\"at M\"unchen, Zentrum Mathematik,
Boltzmannstra\ss e 3, D-85748 Garching bei M\"unchen,
Germany; email: a.behme@tum.de, tel.: +49/89/28917424,
fax:+49/89/28917435}$\,$  \and Alexander Lindner\thanks{Technische
Universit\"at Braunschweig, Institut f\"ur Mathematische Stochastik,
D-38106 Braunschweig, Germany, and 
Ulm University, Institute of Mathematical Finance, D-89081 Ulm, Germany; 
email:
alexander.lindner@uni-ulm.de} \and
Makoto Maejima\thanks{Keio University, Department of Mathematics,
Hiyoshi, Yokohama 223-8522, Japan, email: maejima@math.keio.ac.jp}}
%
%
\maketitle

\abstract{We characterize the support of the law of the exponential functional $\int_0^\infty e^{-\xi_{s-}} \, d\eta_s$ of two one-dimensional independent L\'evy processes $\xi$ and $\eta$. Further, we  study the range of the mapping $\Phi_\xi$ for a fixed L\'evy process $\xi$, which maps the law of $\eta_1$ to the law of the corresponding exponential functional
$\int_0^\infty e^{-\xi_{s-}} \, d\eta_s$. It is shown that the range of this mapping is closed under weak convergence and in the special case of positive distributions several characterizations of laws in the range are given.}

\section{Introduction}

Given a bivariate L\'evy process $(\xi,\eta)^T=((\xi_t,\eta_t)^T)_{t\geq 0}$, its
{\it exponential functional} is defined as
\begin{equation} \label{eq-expfunc}
V:=\int_0^\infty e^{-\xi_{s-}} d\eta_s,
\end{equation}
provided that the integral converges almost surely.
Exponential functionals of L\'evy processes appear as stationary
distributions of generalized Ornstein-Uhlenbeck (GOU) processes.
In particular, if $\xi$ and $\eta$ are independent and $\xi_t$ tends to $+\infty$ as $t\to\infty$
almost surely,
then the law of $V$ defined in \eqref{eq-expfunc} is the stationary distribution of the GOU process
\begin{equation} \label{GOUdef}
V_t=e^{-\xi_t} \left( \int_0^t e^{\xi_{s-}}d\eta_s +V_0  \right),\quad t\geq 0,
\end{equation}
where $V_0$ is a starting random variable, independent of $(\xi,\eta)^T$, on
the same probability space (cf. \cite[Theorem 2.1]{lindnermaller05}). Hence, when $V_0$ is chosen to have the same distribution as $V$,
then the process $(V_t)_{t\geq 0}$ is strictly stationary.

Unless $\xi_t = at$ with $a>0$, the distribution of $V$ is known only in a few special cases. See e.g. Bertoin and Yor \cite{BertoinYor} for a survey on exponential functionals of the form $V=\int_0^\infty e^{-\xi_{s-}} \, ds$, or Gjessing and Paulsen \cite{GjessingPaulsen1997}, who determine the distribution of $\int_0^\infty e^{-\xi_{s-}} \, d\eta_s$ for some cases. A thorough study of distributions of the form $\int_0^\infty e^{-\xi_{s-}} \, d\eta_s$ when $\eta$ is a Brownian motion is carried out in Kuznetsov et al. \cite{savovetal}.
We state the following example due to Dufresne (e.g. \cite[Equation (16)]{BertoinYor}) of an exponential functional whose distribution has been determined and to which we will refer later. Here and in the following we write ``$\stackrel{d}{=}$'' to denote equality in distribution of random variables.

\begin{example} \label{rem-Dufresne}
For $(\xi_t, \eta_t)=(\sigma B_t+a t, t)$ with $\sigma >0$, $a>0$ and a standard Brownian motion $(B_t)_{t\geq 0}$ it holds
$$
V=\int_0^\infty e^{-(\sigma B_t+a t)} dt\overset{d}= \frac{2}{\sigma^2 \Gamma_{\frac{2a}{\sigma^2}}},
$$
where $\Gamma_r$ denotes a standard Gamma random variable with shape parameter $r$, i.e. with density
$$
P(\Gamma_r \in dx)=\frac{x^{r-1}}{\Gamma(r)}e^{-x} \mathds{1}_{(0,\infty)}(x) dx.
$$
\end{example}

Denote by $\cL(X)$ the law of a random variable $X$ and let $\xi=(\xi_t)_{t\geq 0}$ be a
one-dimensional L\'evy process drifting to $+\infty$. In this paper we will consider the mapping
\begin{align*}
\Phi_\xi : D_\xi  & \to \cP(\bR) := \mbox{the set of probability distributions on $\bR$} ,\\
\cL(\eta_1) & \mapsto \cL \left( \int_0^\infty e^{-\xi_{s-}} \,
d\eta_s \right),
\end{align*}
defined on
\begin{align*}
D_\xi := \{ \cL (\eta_1) : \eta=(\eta_t)_{t\ge 0} \, & \mbox{ one-dimensional L\'evy process independent of }\xi\\
& \hskip 15mm \mbox{ such that }\int_0^\infty
e^{-\xi_{s-}} \, d\eta_s\mbox{ converges a.s.}\}.
\end{align*}
An explicit description of $D_\xi$ in terms of the characteristic triplets (cf. \eqref{levykhintchine}) of $\xi$ and $\eta$ follows from
Theorem 2 in Erickson and Maller \cite{ericksonmaller05}. Denote the range of $\Phi_\xi$ by
$$R_\xi := \Phi_\xi(D_\xi).$$
Although the domain $\Phi_\xi$ can be completely characterized by \cite{ericksonmaller05}, much less is known about the range $R_\xi$
and properties of the mapping $\Phi_\xi$.
In the case that $\xi_t=at, a>0$ is deterministic, it is well known that $D_\xi=\mbox{ID}_{\log}(\RR)$,
the set of real-valued infinitely divisible distributions with finite $\mbox{log}^+$-moment, and that
$\Phi_\xi$ is an algebraic isomorphism between $\mbox{ID}_{\log}(\RR)$ and $R_{\xi}= L(\RR)$,
the set of real-valued selfdecomposable distributions \cite[Proposition 3.6.10]{jurekmason}.

For general $\xi$, the mapping $\Phi_\xi$ has already been studied in \cite{BehmeLindner13},
where it has been shown that $\Phi_\xi$ is injective in many cases,
while injectivity cannot be obtained if $\xi$ and $\eta$ are
allowed to exhibit a dependence structure. Further in \cite{BehmeLindner13} conditions
for continuity (in a weak sense) of $\Phi_\xi$ are given.
These results were then used to obtain some information on the range $R_\xi$. 
In particular it has been shown that centered Gaussian distributions can only be obtained in the
setting of (classical) OU processes, namely, for $\xi$ being deterministic and $\eta$
being a Brownian motion.

In this paper we take up the subject of studying properties of the mapping $\Phi_\xi$ and of distributions in $R_\xi$,
and start in Section \ref{S2} with a classification of possible supports of the laws in $R_\xi$. Section \ref{S3} is devoted to show closedness of the range $R_\xi$ under weak convergence. It also follows that the inverse mapping $\Phi_\xi^{-1}$ is continuous if it is well-defined, i.e. if $\Phi_\xi$ is injective. In Sections \ref{S4} and \ref{S5} we specialize on positive distributions in $R_\xi$. Section \ref{S4} gives a general criterion for positive distributions to belong to  $R_\xi$. In Section \ref{S5} we use this criterion to obtain further results in the case that $\xi$ is a Brownian motion with drift. We derive a differential equation for the Laplace exponent of a positive distribution in $R_\xi$ and from this we gain concrete conditions in terms of L\'evy measure and drift for some distributions to be in $R_\xi$. We end up studying the special case of positive stable distributions in $R_\xi$.

For an $\RR^d$-valued L\'evy process $X=(X_t)_{t\geq 0}$, the {\it characteristic exponent}
is given by its L\'evy-Khintchine formula (e.g. \cite[Theorem 8.1]{sato})
\begin{align} \label{levykhintchine}
\log \phi_X(u) &:= \log E\left[e^{i\langle u, X_1 \rangle } \right]\\
&= i \langle \gamma_X, u\rangle - \frac{1}{2} \langle u, A_X u  \rangle + \int_{\RR^d}
(e^{i \langle u, x \rangle} -1 -i \langle u, x \rangle \mathds{1}_{|x|\leq 1}) \nu_X(dx), \quad u\in \bR, \nonumber
\end{align}
where $(\gamma_X, A_X, \nu_X)$ is the {\it characteristic triplet} of the L\'evy process $X$.
In case that $X$ is real-valued we will usually replace $A_X$ by $\sigma^2_X$.
In the special case of subordinators in $\bR$, i.e. nondecreasing L\'evy processes,
we will also use
the Laplace transform $$\LL_{X} (u) := E [e^{-u X_1}]= e^{\psi_X(u)}, \quad u\geq 0,$$ of $X$ and call
$\psi_X(u)$ the Laplace exponent of the L\'evy process $X$.
We refer to \cite{sato} for  further information on L\'evy processes.
In the following, when the symbol $X$ is regarded as a real-valued random variable, we
also use the notation $\phi_X(u)$ and $\LL_{X} (u)$ for its characteristic function and
Laplace transform, respectively.
The Fourier transform of a finite measure $\mu$ on
$\bR$ is written as $\widehat{\mu}(u) = \int_{\bR} e^{iux} \,\mu(dx)$.
We write ``$\stackrel{d}{\to}$'' to denote convergence in distribution of random variables, and ``$\stackrel{w}{\to}$'' to denote weak convergence of probability measures.
We use the abbreviation ``i.i.d.'' for ``independent and identically distributed''. The set of all twice continuously differentiable functions $f:\bR\to \bR$ which are bounded will be denoted by $C_b^2(\bR)$, and the subset of all $f:\bR\to \bR$ which have additionally compact support by $C_c^2(\bR)$.

\section{On the support of the exponential functional}\label{S2}

In this section we shall  give the support of the distribution of the exponential functional
$V = \int_0^\infty e^{-\xi_{s-}} \, d\eta_s$ when $\xi$ and $\eta$ are independent L\'evy processes.
In particular it turns out that the support will always be a closed interval.
A similar result does not hold for solutions of arbitrary random recurrence equations,
or for exponential functionals of L\'evy processes with dependent $\xi$ and $\eta$, as we shall show in Remark~\ref{rem-support-counter}.

For $\xi$ being spectrally negative, it is well known (e.g. \cite{bertoinlindnermaller08}) that $V$ has a selfdecomposable and hence infinitely divisible distribution. In \cite[Theorem 24.10]{sato} a characterization of the support of infinitely divisible distributions is given in terms of the L\'evy triplet. In particular the support of a selfdecomposable distribution on $\RR$ is either a single point, $\RR$ itself or a one-sided unbounded interval. Unfortunately the characteristic triplet of $V$ is not known in general and also, for not spectrally negative $\xi$ this result can not be applied.

Before we characterize the support of the law of $V=\int_0^\infty e^{-\xi_{s-}} \, d\eta_s$ when $\xi$ and $\eta$
are general independent L\'evy processes, we treat the special case
when $\eta_t=t$ in the following lemma.
Much attention has been paid to this case, and in particular, it has been shown that
the stationary solution has a density under various conditions,
see e.g. Pardo et al. \cite{PardoRiveroSchaik} or Carmona at al. \cite{carmonapetityor}.
Haas and Rivero \cite[Theorem 1.4, Lemma 2.1]{HaasRivero} gave a characterization when this
law is bounded  and obtained that this is the case if and only if $\xi$ is a subordinator
with strictly positive drift, and derived the support then.
So parts of the following lemma follow already from results in \cite{HaasRivero},
nevertheless we have decided to give a detailed proof.

\begin{lemma} \label{lem-support}
Let $\xi$ be a L\'evy process drifting to $+\infty$ and set $V = \int_0^\infty e^{-\xi_{s}} ds$.
Then
$$
\supp \law(V) = \begin{cases} \left\{ \frac{1}{b} \right\},
& \mbox{if $\xi_t = bt$ with $b>0$},\\
\left[0,\frac{1}{b} \right],
& \mbox{if $\xi$ is a non-deterministic subordinator with drift $b>0$},\\
\left[\frac{1}{b}, \infty\right) ,
& \mbox{if $\xi$ is non-deterministic and of finite variation},\\
& \mbox{with drift $b>0$ and $\nu_\xi((0,\infty)) = 0$},\\
[0,\infty), & \mbox{otherwise.}
\end{cases}
$$
\end{lemma}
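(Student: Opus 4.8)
The plan is to analyze the four cases by exploiting the self-similarity (in a distributional sense) of $V$ under the shift of the L\'evy process, combined with a careful study of what the integrand contributes on small time intervals. Write $V = \int_0^\infty e^{-\xi_s}\,ds$. The deterministic case $\xi_t = bt$ is immediate since $V = 1/b$ surely, so assume $\xi$ is non-deterministic. The key structural identity is the following: for any fixed $t>0$,
\begin{equation*}
V = \int_0^t e^{-\xi_s}\,ds + e^{-\xi_t}\,\widetilde V,
\end{equation*}
where $\widetilde V := \int_0^\infty e^{-(\xi_{t+s}-\xi_t)}\,ds$ is independent of $\mathcal F_t$ and has the same law as $V$. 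This will be the workhorse for both the lower and upper bounds on the support.

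First I would establish that $\supp\law(V)$ is always an interval. This follows from the above identity: conditioning on $\mathcal F_t$, the conditional law of $V$ is a location-scale transform (by $e^{-\xi_t}$ and $\int_0^t e^{-\xi_s}\,ds$) of an independent copy of $V$; using that $\int_0^t e^{-\xi_s}\,ds \to 0$ and $e^{-\xi_t}\to 1$ as $t\downarrow 0$ (a.s. and in probability), together with the fact that the support of a mixture contains the closure of the union of the supports of the components, one gets that if $x,y \in \supp\law(V)$ then points arbitrarily close to any convex combination lie in the support as well; a standard connectedness argument then forces the support to be an interval (possibly unbounded or a single point). Next, for the endpoints: the infimum of the support is $\inf \law(V) = \liminf$ over paths of $\int_0^\infty e^{-\xi_s}\,ds$, and similarly for the supremum. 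When $\xi$ has a positive drift $b$ and no positive jumps ($\nu_\xi((0,\infty))=0$), then $\xi_s \le bs$ for all $s$ on every path, hence $V \ge \int_0^\infty e^{-bs}\,ds = 1/b$, giving the lower bound $1/b$; conversely one shows $1/b$ is attained in the limit by choosing paths whose jumps occur later and later and are smaller and smaller, so that $\xi_s$ is close to $bs$ on a long initial stretch — here one uses that the L\'evy measure being nonzero on $(-\infty,0)$ lets the negative excursions be made negligible, or if $\nu_\xi = 0$ entirely but $\sigma_\xi^2 > 0$ the Gaussian part can be made small on $[0,T]$ with positive probability, driving $V$ down to $1/b$. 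Symmetrically, when $\xi$ is a subordinator (so $\xi_s \ge 0$, non-decreasing) with drift $b$, then $\xi_s \ge bs$ pathwise gives $V \le 1/b$, while $V \ge 0$ trivially, and both $0$ and $1/b$ are shown to be approachable (for $0$, let the subordinator make a huge jump almost immediately; for $1/b$, postpone all jumps).

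For the lower bound $0$ in the remaining ("otherwise") cases — i.e. $\xi$ not a subordinator and not of finite-variation-with-no-positive-jumps — I would argue that with positive probability $\xi$ can be made arbitrarily negative on an initial interval $[0,T]$: either $\sigma_\xi^2 > 0$, in which case the Brownian component alone can push $\xi_t$ down to any level on $[0,T]$ with positive probability (the drift from truncation and the independent jump part only need to be controlled, not eliminated), or $\nu_\xi((-\infty,0)) > 0$ and the jump part has either infinite variation of the negative jumps or a negative linear drift, again allowing $\xi$ to be very negative on $[0,T]$ with positive probability; then $\int_0^T e^{-\xi_s}\,ds$ is huge, so large values of $V$ occur with positive probability, and since the support is an interval containing arbitrarily large points and (by pushing $\xi$ up quickly, using that $\xi\to+\infty$) arbitrarily small positive points, it must be $[0,\infty)$. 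The final piece is to rule out that $0$ is an atom or that the support fails to be $[0,\infty)$ rather than $(0,\infty)$ in these cases — but $V > 0$ a.s. always (the integrand is strictly positive), and $0$ lies in the closure because $\int_0^T e^{-\xi_s}\,ds$ can be made arbitrarily small by sending $\xi$ up fast on $[0,T]$ and then, on the tail, using that $\widetilde V$ is multiplied by $e^{-\xi_T}$ which can be made tiny; together with the interval property this gives $[0,\infty)$.

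The main obstacle I anticipate is the rigorous verification of the "approachability" claims for the endpoints — i.e. showing that for a non-deterministic L\'evy process one can, with positive probability, force the path into a prescribed tube (e.g. $|\xi_s - bs| < \varepsilon$ for $s \in [0,T]$, or $\xi_s$ very negative on $[0,T]$). This requires a case analysis on the L\'evy triplet: the Gaussian part, the small jumps, and the drift must be handled separately, and one must be careful that truncation-induced drift does not obstruct the construction. The cleanest route is probably to invoke the known support theorem for L\'evy processes in path space (the support of $\law((\xi_s)_{s\le T})$ in Skorokhod space is the closure of the set of paths reachable by the "controllable" part of the generator), or alternatively to decompose $\xi$ into independent pieces and drive each piece into the desired regime, using independence to multiply the probabilities. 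Once the path-space support statements are in hand, the continuity of the functional $(\xi_s)_{s\le T}\mapsto \int_0^T e^{-\xi_s}\,ds$ (in an appropriate sense) transfers them to statements about $\supp\law(V)$.
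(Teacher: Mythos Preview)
Your proposal has a real gap in the step ``$\supp\law(V)$ is an interval.'' You assert that the identity $V = A_t + B_t\widetilde V$ with $A_t\to 0$, $B_t\to 1$ as $t\downarrow 0$ forces convex combinations of support points back into the support, but this does not follow from what you wrote. From the identity you only get that $a+b\cdot\supp\law(V)\subset\supp\law(V)$ for every $(a,b)\in\supp\law(A_t,B_t)$; to connect two points $x,y\in\supp\law(V)$ you would need specific pairs $(a,b)$, and nothing in the $t\to 0$ limit produces them (in that limit the affine map degenerates to the identity). The paper's own Remark~\ref{rem-support-counter} is instructive: a self-similarity identity of exactly this form holds for stationary solutions of $X_n=A_nX_{n-1}+B_n$, yet the support can be a Cantor set. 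What saves the L\'evy setting is \emph{not} small $t$, but the continuous-time stopping structure, and you have not used it.

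The paper proceeds differently. It never proves ``support is an interval'' as a standalone lemma. Instead, when $\nu_\xi((0,\infty))>0$ it fixes $x_0\in\supp\law(V)$ and $c\in\supp\nu_\xi\cap(0,\infty)$, picks any $y_0\in(e^{-c}x_0,x_0)$, and shows $y_0\in\supp\law(V)$ by \emph{path surgery}: take a path giving $V\approx x_0$, run it until the stopping time $T_1$ at which $\int_0^{T_1}e^{-\xi_s}\,ds = z_0$ (with $z_0$ chosen so that $z_0+e^{-c}(x_0-z_0)=y_0$), then insert one extra jump of size $\approx c$ just after $T_1$. The strong Markov property guarantees the surgically modified event still has positive probability. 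This yields that the support is an interval with lower endpoint $0$ whenever there are positive jumps; a symmetric argument with negative jumps gives upper endpoint $+\infty$. The no-jump case ($\xi$ a Brownian motion with drift) is handled separately by appealing to Dufresne's explicit density from Example~\ref{rem-Dufresne}. Your endpoint analysis (pathwise bounds plus approachability via Shtatland-type small-time behaviour or $\supp\law(\xi_t)=\bR$ in the infinite-variation case) is broadly in line with what the paper does, though your case split for the ``otherwise'' branch omits the subordinator with drift $0$, which is neither covered by ``$\sigma_\xi^2>0$'' nor by ``$\nu_\xi((-\infty,0))>0$'' yet still has $\supp\law(V)=[0,\infty)$.
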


\begin{proof}
The claim is clear if $\xi$ is deterministic, while it follows from Remark~\ref{rem-Dufresne}
if $\xi$ is a Brownian motion with drift, so suppose that $\nu_\xi \not\equiv 0$.
Suppose first that $\nu_\xi((0,\infty)) > 0$, and let $x_0 \in \supp \law (V) \cap (0,\infty)$.
Let $c\in \supp \nu_\xi \cap (0,\infty)$ and $y_0 \in (e^{-c}x_0, x_0)$.
We shall show that also $y_0 \in \supp \law(V)$, so that by induction $\supp \law(V)$
must be an interval with lower endpoint 0 if $\nu_\xi((0,\infty)) > 0$.
To see this, define $z_0 \in (0,y_0)$
so that
$$
z_0 + e^{-c} (x_0 - z_0) = y_0.
$$
Let $\varepsilon \in (0, \frac{x_0-z_0}{2})$ and define
$$
A = A_\varepsilon := \left\{ \omega \in \Omega : \int_0^\infty e^{-\xi_s(\omega)} \, ds
\in (x_0-\varepsilon, x_0 + \varepsilon)\right\}.
$$
Then $P(A) > 0$ since $x_0 \in \supp \law(V)$.
Define the stopping time $T_1\in [0,\infty]$ by
$$
T_1(\omega) := \inf \left\{ t\geq 0: \int_0^{t} e^{-\xi_s(\omega)} \, ds = z_0\right\} .
$$
Since $t\mapsto \int_0^t e^{-\xi_s(\omega)} \, ds$ is continuous,
$T_1$ is finite on $A$.
Let
$\delta_1 \in (0, \frac{x_0-z_0}{2})$ and  $\delta_2 \in (0,c)$.
Then $\nu_\eta((c-\delta_2,c+\delta_2)) > 0$, and since $P(A) > 0$,
there are a (sufficiently large) constant $K=K(\varepsilon, \delta_1,\delta_2)>0$ and a
(sufficiently small) constant $\delta = \delta(\varepsilon, \delta_1,\delta_2) > 0$
such that $\delta<1$ and
\begin{align*}
B := B_{\varepsilon,\delta_1,\delta_2, \delta, K}   :=   A \cap
  \Big \{T_1  &\leq K, \int_{T_1}^{T_1+\delta} e^{-\xi_s} \, ds \leq \delta_1, \\
&    \Delta \xi_s \not\in (c-\delta_2,c+\delta_2), \; \forall\, s\in (T_1,T_1+\delta] \Big\}
\end{align*}
has a positive probability. Now define the set $C=C_{\varepsilon, \delta_1,\delta_2,\delta,K}$
to be the set of all $\omega \in \Omega$, for which there exists an $\omega'\in B$,
some time $t(\omega') \in (T_1\wedge K , (T_1\wedge K) + \delta]$ and some
$\alpha(\omega') \in (c-\delta_2, c+\delta_2)$ such that
$$
(\xi_t(\omega))_{t\geq 0} = (\xi_t(\omega') + \alpha(\omega') \mathds{1}_{[t(\omega'), \infty)})_{t\geq 0},
$$
namely, the set of $\omega$ whose paths behave exactly like a sample path from the set $B$,
but with the exception that additionally exactly one jump of size in $(c-\delta_2,c+\delta_2)$
occurs in the interval $(T_1\wedge K, (T_1\wedge K) +\delta]$.
Since $T_1\wedge K$ is a finite stopping time, it follows from the strong Markov property of $\xi$
and from $P(B) > 0$ that also $P(C) > 0$.
But for $\omega\in C$, with $\omega'\in B$ and $\alpha=\alpha(\omega') \in (c-\delta_2,c+\delta_2)$ as in
the definition of $C$, we obtain
\begin{align*}
\lefteqn{\int_0^\infty e^{-\xi_s(\omega)} \, ds }\\ &=  \int_0^{T_1(\omega')} e^{-\xi_s (\omega')} \, ds +
\int_{T_1(\omega')}^{T_1(\omega')+\delta} e^{-\xi_s(\omega)}\, ds + e^{-\alpha}
\int_{T_1(\omega')+\delta}^\infty e^{-\xi_s(\omega')} \, ds \\
& \in \Big[ z_0  + \int_{T_1(\omega')}^{T_1(\omega')+\delta} e^{-\xi_s(\omega)}\, ds +
e^{-\alpha} \Big(x_0 - \varepsilon- z_0-
\int_{T_1(\omega')}^{T_1(\omega')+\delta} e^{-\xi_s(\omega')} \, ds \Big),\\
&  \hskip 10mm z_0  + \int_{T_1(\omega')}^{T_1(\omega')+\delta} e^{-\xi_s(\omega)}\, ds +
e^{-\alpha} \Big(x_0 +\varepsilon - z_0-
\int_{T_1(\omega')}^{T_1(\omega')+\delta} e^{-\xi_s(\omega')} \, ds \Big) \Big] \\ &\\ & \\
&\subset  \Big[ z_0  -\delta_1 + e^{-c} (x_0-z_0-\varepsilon) + (e^{-c-\delta_2} - e^{-c})
(x_0-z_0-\varepsilon) - e^{-c+\delta_2} \delta_1, \\ 
&  \hskip 10mm  z_0 +\delta_1 + e^{-c} (x_0-z_0+\varepsilon) + (e^{-c+\delta_2} - e^{-c})
(x_0-z_0+\varepsilon) + e^{-c+
\delta_2} \delta_1\Big].
\end{align*}
Since $y_0 = z_0 + e^{-c} (x_0 - z_0)$, we see that $y_0 \in \supp \law(V)$ by choosing
$\varepsilon, \delta_1$ and $\delta_2$ sufficiently small.
So we have shown that $\supp \law(V)$ is an interval with 0 as its lower endpoint if
$\nu_\xi((0,\infty)) > 0$.\\
By a similar reasoning, one can show that $\supp \law(V)$ is an interval with $+\infty$ as
its upper endpoint if $\nu_\xi((-\infty,0)) > 0$.

It follows that $\supp \law (V) = [0,\infty)$ if $\nu_\xi((0,\infty)) > 0$ and
$\nu_\xi((-\infty,0)) > 0$.
Now suppose that $\xi$ is of infinite variation with
$\nu_\xi ((0,\infty)) > 0$ (but $\nu_\xi((-\infty,0)) = 0$), or
$\nu_\xi((-\infty,0)) > 0$ (but $\nu_\xi((0,\infty))  = 0$).
Then there is  $\alpha > 0$ such that for each $t_1,t_0 > 0$ with $t_1> t_0$ and $K> 0$ the event
$$
\{\xi_s \geq -2, \; \forall s\in [0,t_0], \quad \xi_s \geq K, \; \forall s\in [t_0,t_1] ,\quad
\xi_s \geq \alpha s, \; \forall s \geq t_1\}
$$
has a positive probability, since $\lim_{t\to\infty} t^{-1} \xi_t$ exists almost surely in $(0,\infty]$
by \cite[Theorems 4.3 and 4.4]{doneymaller2002}) and since $\supp \law(\xi_t) = \bR$
for all $t>0$ (cf. \cite[Theorem 24.10]{sato}).
Choosing $t_0$ small enough and $t_1,K$ big enough, it follows that $0 \in \supp \law(V)$ since $\supp \law(V)$ is closed. On the other hand, since also the event
$$
\{\xi_s \leq 2, \; \forall s \in [0,t_2]\}
$$
has positive probability for each $t_2>0$ as a consequence of the infinite variation of $\xi$,
it follows that $\supp \law(V)$ is unbounded, hence showing that $\supp \law(V) = [0,\infty)$
if $\xi$ is of infinite variation.

Now assume that $\xi$ is of finite variation with drift $b\in \bR$, $\nu_\xi((0,\infty)) > 0$
and $\nu_\xi((-\infty,0)) = 0$. We already know that $0 \in \supp \law(V)$.
If $b\leq 0$, then the event $\{\xi_s \leq 2, \; \forall s\in [0,t_2]\}$ has
a positive probability for each $t_2>0$, and hence $\supp \law(V)$ is unbounded.
If $b>0$, then for each $\varepsilon >0$ and $t_2>0$, the event
$\{\xi_s \leq (b+\varepsilon)s, \; \forall s\in [0,t_2]\}$ has a positive probability by
Shtatland's result (cf. \cite[Theorem 43.20]{sato}), so that $\sup \supp \law(V)
\geq \int_0^{t_2} e^{-(b+\varepsilon)s} \, ds$ for each $t_2>0$ and $\varepsilon >0$,
and hence  $\sup \supp \law(V) \geq 1/b$.
On the other hand, in that case $V = \int_0^\infty e^{-\xi_s} \, ds \leq
\int_0^\infty e^{-bs} \, ds = 1/b$, so that $\supp \law(V) = [0,1/b]$.

Finally, assume that $\xi$ is of finite variation with drift $b>0$, $\nu_\xi((0,\infty)) = 0$
and\linebreak $\nu_\xi((-\infty,0)) > 0$.
Then $\supp \law(V)$ is unbounded and by arguments similar to above, using that
$\lim_{t\to\infty} t^{-1} \xi_t = E [\xi_1] \in (0,b)$, we see that $\inf \supp \law(V) = 1/b$,
so that $\supp \law(V) = [1/b,\infty)$.
This finishes the proof.\qed
\end{proof}

Now we can characterize the support of $\law\left(\int_0^\infty e^{-\xi_{s-}} \, d\eta_s\right)$
when $\xi$ and $\eta$ are independent L\'evy processes. Observe that Theorem \ref{thm-support} below together with Lemma~\ref{lem-support}  provides
a complete characterization of all possible cases.

\begin{theorem} \label{thm-support}
Let $\xi$ and $\eta$ be two independent L\'evy processes such that
$V := \int_0^\infty e^{-\xi_{s-}} \, d\eta_s$ converges almost surely.
\begin{enumerate}
\item Suppose $\eta$ is of infinite variation, or that $\nu_\eta ((0,\infty))>0$ and
$\nu_\eta ((-\infty,0))>0$. Then $\supp \law(V)  = \bR$.
\item Suppose $\eta$ is of finite variation with drift $a$, $\nu_\eta((0,\infty)) > 0$ and $\nu_\eta((-\infty,0)) = 0$.
Then for $a\geq 0$
$$
\supp \law(V) =
\begin{cases}
\left[\frac{a}{b}, \infty\right),  &\mbox{if $\xi$ is of finite variation with drift
$b>0$} \\ & \mbox{and $\nu_\xi((0,\infty)) = 0$},\\
 [0,\infty), & \mbox{otherwise},
\end{cases}
$$
and for $a<0$
$$
\supp \law(V) =
\begin{cases}
\left[\frac{a}{b}, \infty\right),  &\mbox{if $\xi$ is a subordinator with drift $b>0$},\\
\bR, & \mbox{otherwise.} \\
\end{cases}
$$
\item Suppose $\eta$ is of finite variation with drift $a$,
$\nu_\eta((0,\infty)) = 0$ and $\nu_\eta((-\infty,0)) > 0$.
Then for $a> 0$
$$
\supp \law(V) =
\begin{cases}
\left( - \infty, \frac{a}{b}\right],  &\mbox{if $\xi$ is a subordinator with drift $b>0$},\\
\bR, & \mbox{otherwise,} \\
\end{cases}
$$
and for $a\leq 0$
$$
\supp \law(V) =
\begin{cases}
\left(-\infty, \frac{a}{b}\right],  &\mbox{if $\xi$ is of finite variation with drift
$b>0$} \\ & \mbox{and $\nu_\xi((0,\infty)) = 0$},\\
 (-\infty,0], & \mbox{otherwise.}
\end{cases}
$$
\end{enumerate}
\end{theorem}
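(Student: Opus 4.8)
The plan is to condition on the path of $\xi$: conditionally on $\xi$, the exponential functional $V=\int_0^\infty e^{-\xi_{s-}}\,d\eta_s$ is an integral of a deterministic integrand against the independent L\'evy process $\eta$, hence infinitely divisible with a computable characteristic triplet, to which \cite[Theorem 24.10]{sato} applies even though the unconditional triplet of $V$ is unknown. By independence of $\xi$ and $\eta$ and almost sure convergence of $V$, Fubini's theorem shows that $\mu_w:=\law(V\mid\xi=w)$ is a well-defined probability measure for $\law(\xi)$-almost every path $w$, and that $\int_0^\infty e^{-2\xi_{s-}}\,ds<\infty$ a.s.\ (since $2\xi$, like $\xi$, drifts to $+\infty$, cf.\ \cite{ericksonmaller05}). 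Approximating $\int_0^\infty e^{-\xi_{s-}}\,d\eta_s$ by sums $\sum_j e^{-\xi_{t_{j-1}}}(\eta_{t_j\wedge T}-\eta_{t_{j-1}\wedge T})$ of conditionally independent infinitely divisible variables and letting the mesh tend to $0$ and then $T\to\infty$, one finds that $\mu_w$ is infinitely divisible with Gaussian variance $\sigma_\eta^2\int_0^\infty e^{-2\xi_{s-}}\,ds$, L\'evy measure $\nu^w(B)=\int_0^\infty\nu_\eta(\{x:e^{-\xi_{s-}}x\in B\})\,ds$ for Borel $B\not\ni0$, and, when $\eta$ has finite variation with drift $a$, natural drift $aI_w$, where $I_w:=\int_0^\infty e^{-\xi_{s-}}\,ds$.

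Next I would read off $\supp\mu_w$ from \cite[Theorem 24.10]{sato}. The point that prevents $\supp\mu_w$ from degenerating to a shifted discrete set is that, because $\xi_{s-}\to+\infty$, the measure $\nu^w$ puts infinite mass on every $(0,\varepsilon)$ whenever $\nu_\eta((0,\infty))>0$ and on every $(-\varepsilon,0)$ whenever $\nu_\eta((-\infty,0))>0$; hence $\supp\nu^w$ contains nonzero points arbitrarily close to $0$ and so lies in no lattice $c\bZ$. Therefore: in case~(1), $\mu_w$ has a non-degenerate Gaussian part, or (using $e^{-\xi_{s-}}\to1$ as $s\to0$) a L\'evy measure of infinite variation, or a non-lattice L\'evy measure charging both half-lines, and in each case $\supp\mu_w=\bR$; in case~(2), writing $\eta_t=at+J_t$ with $J$ a driftless subordinator gives $V=aI_w+\int_0^\infty e^{-\xi_{s-}}\,dJ_s\ge aI_w$, so $\supp\mu_w=[aI_w,\infty)$; in case~(3), symmetrically, $\supp\mu_w=(-\infty,aI_w]$.

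Finally, $\law(V)=\int\mu_w\,\law(\xi)(dw)$ is a mixture, so $x\in\supp\law(V)$ iff for every $\varepsilon>0$ the event $\{\mu_\xi((x-\varepsilon,x+\varepsilon))>0\}$ has positive probability. In case~(1) this is automatic. In case~(2) it reduces to $aI_\xi<x+\varepsilon$ with positive probability for all $\varepsilon>0$, i.e.\ to $x\ge a\cdot\inf\supp\law(I_\xi)$ if $a\ge0$ and to $x\ge a\cdot\sup\supp\law(I_\xi)$ if $a<0$; and by Lemma~\ref{lem-support}, $\inf\supp\law(I_\xi)=1/b$ exactly when $\xi$ has finite variation with drift $b>0$ and $\nu_\xi((0,\infty))=0$ (and $=0$ otherwise), while $\sup\supp\law(I_\xi)=1/b$ exactly when $\xi$ is a subordinator with drift $b>0$ (and $=+\infty$ otherwise). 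Substituting yields the stated formulas, and case~(3) follows by replacing $\eta$ with $-\eta$.

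The step requiring the most care is the identification of the conditional infinitely divisible triplet of $V$ given $\xi$, together with the observation that $\nu^w$ accumulates at $0$ from the relevant side: this is exactly what makes \cite[Theorem 24.10]{sato} produce a full half-line or all of $\bR$ for $\supp\mu_w$ instead of a degenerate set, which cannot be deduced directly for the unconditional law of $V$. The remaining ingredients — a.s.\ finiteness of $\int_0^\infty e^{-2\xi_{s-}}\,ds$, convergence of the approximating triplets, and the passage from the mixture to $\inf$ and $\sup$ of $\supp\law(I_\xi)$ via Lemma~\ref{lem-support} — are routine.
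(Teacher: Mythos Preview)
Your approach is essentially the same as the paper's: condition on the path of $\xi$, identify the conditional infinitely divisible triplet of $V$, apply \cite[Theorem 24.10]{sato} to the conditional law, and then reduce the mixture to the support of $I_\xi=\int_0^\infty e^{-\xi_s}\,ds$ via Lemma~\ref{lem-support}. The paper obtains the conditional triplet directly from \cite[Theorems 3.10 and 3.15]{sato2007} rather than by the Riemann-sum approximation you sketch, which saves you the work of that limiting argument; otherwise the logic, including the key observation that $\nu^w$ accumulates infinite mass at $0$ (so that $0\in\supp\nu^w$ and no lattice issues arise), is identical. One small point in your favour: you make explicit the sub-case of (i) where $A_\eta=0$ but $\int_{|x|\le 1}|x|\,\nu_\eta(dx)=\infty$ with $\nu_\eta$ one-sided, noting that $e^{-\xi_{s-}}\to 1$ as $s\to 0$ forces the conditional L\'evy measure to inherit infinite variation and hence $\mu_w$ to be of type~C; the paper's write-up is less explicit there.
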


\begin{proof}
Denote by $D([0,\infty),\bR)$ the set of all real valued c\`adl\`ag functions on $[0,\infty)$.
Since $\xi$ and $\eta$ are independent, we can condition on $\xi=f$ with
$f\in D([0,\infty),\bR)$ and it follows that, for $P_\xi$-almost every $f\in D([0,\infty),\bR)$,
$$
V_f := \int_0^\infty e^{-f(s-)} \, d\eta_s = \lim_{T\to\infty} \int_0^T e^{-f(s-)} \, d\eta_s
$$
converges almost surely.
Hence we can apply the results in \cite{sato2007} for such $f$, and obtain that
$V_f$ is infinitely divisible with Gaussian variance
$$
A_f = A_\eta \int_0^\infty e^{-2f(s)} \, ds
$$
and L\'evy measure $\nu_f$, given by
$$
\nu_f(B) = \int_0^\infty ds\int_{\bR} \mathds{1}_B (e^{-f(s)} x) \,
\nu_\eta(dx) \quad \mbox{for $B\in \cB(\bR^d)$ with $0\not\in B$}
$$
(cf. \cite[Theorem 3.10]{sato2007}).
In particular, $A_f > 0$ if and only if $A_\eta>0$,
$\nu_f((0,\infty)) > 0$ if and only if $\nu_\eta((0,\infty)) > 0$,
and $\nu_f((-\infty,0)) > 0 $ if and only if
$\nu_\eta((-\infty,0)) > 0$.
Further, since $\lim_{s\to\infty} f(s) = +\infty$ $P_\xi$-a.s.$(f)$,
for any $\varepsilon >0$ we conclude that
$$
\nu_f((-\varepsilon,\varepsilon) \setminus \{0\}) = \int_0^\infty \nu_\eta
((-e^{f(s)}\varepsilon,e^{f(s)}\varepsilon)\setminus \{0\}) ds= \infty
$$
provided that $\nu_\eta \not\equiv 0$.
This shows that $0 \in \supp \nu_f$, $P_\xi$-a.s.$(f)$.
It then follows from \cite[Theorem 24.10]{sato} that
$$
\supp \law(V_f) = \bR, \quad P_\xi-\mbox{a.s.}(f)
$$
if $A_\eta>0$, or if $\nu_\eta((0,\infty))> 0$ and $\nu_\eta((-\infty,0)) > 0$.\\
Hence in that case $P(V_f \in B |\xi=f) > 0$ $P_\xi$-a.s.$(f)$ for any open set $B\neq \emptyset$,
so that $P(V\in B) = \int P(V_f \in B|\xi = f) \, dP_\xi(f)>0$.
Thus $\supp \law(V) = \bR$, which shows (i).

To show (ii), suppose $\eta$ is of finite variation with drift $a$, and
$\nu_\eta((0,\infty))>0$ and $\nu_\eta((-\infty,0)) = 0$.
Then, for $P_\xi$-a.e. $f$, $V_f\geq a \int_0^\infty e^{-f(s)} \, ds> -\infty$ and hence $V_f$
is of finite variation.
It then follows from \cite[Theorem 3.15]{sato2007} that $V_f$ has drift
$a \int_0^\infty e^{-f(s)} \, ds$ and \cite[Theorem 24.10]{sato} gives
$$
\supp \law(V_f) = \left[ a \int_0^\infty e^{-f(s)} \, ds, \infty\right).
$$
Since $P(V \in B) = \int P(V_f \in B|\xi=f) \, dP_\xi(f)$,
the assertion (ii) follows from  Lemma~\ref{lem-support}.
Finally, (iii) follows from (ii) by replacing $\eta$ by $-\eta$.\qed
\end{proof}

The following result is now immediate.

\begin{corollary} \label{cor-positive-subordinator}
Let $\xi$ be a L\'evy process drifting to $+\infty$, and $\eta$ another L\'evy process,
independent of $\xi$ such that $\cL(\eta_1)\in D_\xi$.
Then $V=\int_0^\infty e^{-\xi_{s-}} \,d\eta_s\geq 0$ a.s. if and only if $\eta$ is a subordinator.
\end{corollary}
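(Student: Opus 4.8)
The plan is to read off both implications from the support description in Theorem~\ref{thm-support}, supplemented by a short direct argument in the single case that theorem does not literally cover, namely $\eta$ deterministic.

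The ``if'' direction I would dispatch immediately: if $\eta$ is a subordinator, then $P$-almost every path $s\mapsto\eta_s$ is nondecreasing, so the pathwise Lebesgue--Stieltjes integral of the nonnegative integrand $e^{-\xi_{s-}}$ against the nonnegative measure $d\eta_s$ is nonnegative, giving $V\ge 0$ a.s.

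For the converse, suppose $V\ge 0$ a.s., so that $\supp\law(V)\subseteq[0,\infty)$. First I would treat the case that $\eta$ is deterministic, i.e.\ $\eta_t=at$ for some $a\in\bR$: then $V=a\int_0^\infty e^{-\xi_s}\,ds$, and since $\int_0^\infty e^{-\xi_s}\,ds$ is a.s.\ finite (as $\xi$ drifts to $+\infty$) and a.s.\ strictly positive (the c\`adl\`ag path $s\mapsto\xi_s$ is bounded on $[0,1]$, hence $e^{-\xi_s(\omega)}\ge e^{-\sup_{s\in[0,1]}\xi_s(\omega)}>0$ there), the constraint $V\ge 0$ a.s.\ forces $a\ge 0$, whence $\eta$ is a subordinator. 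If $\eta$ is not deterministic, then, using that finite variation forces $\sigma_\eta^2=0$, the process $\eta$ falls into one of the regimes (i), (ii), (iii) of Theorem~\ref{thm-support}. Regime (i) yields $\supp\law(V)=\bR$ and regime (iii) yields a support of the form $(-\infty,c]$; both contradict $\supp\law(V)\subseteq[0,\infty)$, so $\eta$ must be as in regime (ii): of finite variation with $\sigma_\eta^2=0$, $\nu_\eta((-\infty,0))=0$, and some drift $a$. Finally, within regime (ii) the subcase $a<0$ is also excluded, since then the support would be either $[a/b,\infty)$ with $a/b<0$ or all of $\bR$; hence $a\ge 0$. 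A finite-variation L\'evy process with no Gaussian part, only positive jumps, and nonnegative drift is exactly a subordinator, which finishes the argument.

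Since everything reduces to matching the hypothesis $V\ge 0$ against the cases of Theorem~\ref{thm-support}, I do not anticipate a genuine obstacle; the only point requiring mild care is the deterministic $\eta$, which is not subsumed by the hypotheses of Theorem~\ref{thm-support} and must be handled separately through the elementary observation that $\int_0^\infty e^{-\xi_s}\,ds>0$ a.s.
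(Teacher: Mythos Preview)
Your proposal is correct and matches the paper's approach: the paper states the corollary as ``immediate'' after Theorem~\ref{thm-support} without giving any further proof, and your argument simply spells out the case analysis that this word is meant to summarize. Your separate handling of the deterministic case $\eta_t=at$ (not literally covered by the hypotheses of Theorem~\ref{thm-support}) is a correct and worthwhile detail.
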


\begin{remark} \label{rem-support-counter}
{\rm (i) Let $\xi$ and $\eta$ be two independent L\'evy processes such that $V=\int_0^\infty e^{-\xi_{s-}} \, d\eta_s$
converges almost surely and consider the associated GOU process $(V_t)_{t\geq 0}$ defined by \eqref{GOUdef}. Then
it is easy to see that $V_n = A_n V_{n-1} + B_n$ for each $n\in \bN$, where $((A_n,B_n)^T)_{n\in \bN}$ is an i.i.d. sequence of
bivariate random vectors given by
$$(A_n,B_n)^T = \left(e^{-(\xi_n - \xi_{n-1})}, e^{-(\xi_n-\xi_{n-1})} \int_{(n-1,n]} e^{\xi_{s-} -\xi_{n-1}} \, d\eta_s\right)^T$$
(e.g. \cite[Lemma 6.2]{lindnermaller05}). Further, if $V_0$ is chosen to be independent of $(\xi,\eta)^T$, then
$(V_0,\ldots, V_{n-1})^T$ is independent of $((A_k,B_k)^T)_{k\geq n}$ for each $n$.
Since $\law(V)$ is the stationary marginal distribution of the GOU process, it is also the stationary marginal distribution of the random recurrence
equation $V_n = A_n V_{n-1} + B_n$, $n\in \bN$. We have seen in particular, that the support of $\law(V)$ was always an interval.
Hence it is natural to ask if stationary solutions to arbitrary random recurrence equations will always have an interval as its support. We will see that this is not the case.
To be more precise,
let $\left((A_n,B_n)^T\right)_{n\in \bN}$ be a given i.i.d. sequence of bivariate random vectors.
Suppose that $(X_n)_{n\in \bN_0}$ is a strictly stationary sequence which satisfies the
random recurrence equation
\begin{equation}
X_n = A_n X_{n-1} + B_n , \quad n\in \bN,\label{eq-rc1}
\end{equation}
such that $(X_0, \ldots, X_{n-1})$ is independent of $\left((A_k,B_k)^T\right)_{k\geq n}$
(provided that such a solution exists) for every $n\in \bN$.
Then the support of $\law(X_0)$ does not need to be an interval,
even if $A_n$ is constant and hence $A_n$ and $B_n$ are independent.
To see this, let $A_n = 1/3$ and let $(B_n)_{n\in \bZ}$ be an i.i.d. sequence
such that $P(B_n  =0) = P(B_n=2)=\frac12$. Then
\begin{equation} \label{eq-Cantor}
X_n = \sum_{k=0}^\infty 3^{-k} B_{n-k}, \quad n\in \bN_0,
\end{equation}
defines a stationary solution of \eqref{eq-rc1}, which is unique in distribution.
Obviously, the support of $\law(X_0)$ is given by the Cantor set
$$\left\{ \sum_{n=0}^\infty 3^{-n} z_n : z_n \in \{0,2\}, \; \forall\; n\in \bN_0\right\},$$
which is totally disconnected and not an interval.\\
(ii) The stationary solution constructed in \eqref{eq-Cantor} is a $1/3$-decomposable distribution (see \cite[Definition 64.1]{sato} for the definition).
By Proposition 6.2 in \cite{BehmeLindner13}, there exists a bivariate L\'evy process $(\xi,\eta)^T$
such that $\xi_t = (\log 3) N_t$ for a Poisson process $(N_t)_{t\geq 0}$ and such that
$$
\int_0^\infty e^{-\xi_{s-}} \, d\eta_s = \int_0^\infty 3^{-N_{s-}} \, d\eta_s
$$
has the same distribution as $X_0$ from \eqref{eq-Cantor}.
In particular, its support is not an interval. Hence a similar statement to Theorem \ref{thm-support}
does not hold under dependence.}
\end{remark}

\section{Closedness of the range}\label{S3}

This section is devoted to show that, as in the well-known case of a deterministic process $\xi$, the range $R_\xi=\Phi_\xi(D_\xi)$ is closed under weak convergence. On the contrary, closedness of $R_\xi$ under convolution does not hold any more as will be shown in Corollary \ref{cor-noconvolutions} below. \\
It will also follow that the inverse mapping $(\Phi_\xi)^{-1}$ is continuous, provided that $\Phi_\xi$ is injective. Recall that $\Phi_\xi$ is injective if, for instance, $\xi$ is spectrally negative
(cf. \cite[Theorem 5.3]{BehmeLindner13}).
Further, for any $\xi$ drifting to $+\infty$, $\Phi_\xi$ is always injective when restricted to positive measures $\cL(\eta_1)$ \cite[Remark 5.4]{BehmeLindner13}.
Thus, although $\Phi_\xi$ need not be continuous
(which follows by an argument similar to \cite[Example
7.1]{BehmeLindner13}), the inverse of $\Phi_\xi$ restricted to positive measures will turn out to be always
continuous.

We start with the following proposition, which shows that the mapping $\Phi_\xi$ is closed.

\begin{proposition} \label{prop-closed-mapping}
Let $\xi$ be a L\'evy process drifting to $+\infty$.
Then the mapping $\Phi_\xi$ is closed in the sense that if
$\law(\eta_1^{(n)}) \in D_\xi$, $\eta_1^{(n)} \stackrel{d}{\to}
\eta_1$ and $\Phi_\xi(\law(\eta^{(n)}_1)) \stackrel{w}{\to} \mu$ for
some random variable $\eta_1$ and probability measure $\mu$ as
$n\to\infty$, then $\law(\eta_1)\in D_\xi$ and
$\Phi_\xi(\law(\eta_1)) = \mu$.
\end{proposition}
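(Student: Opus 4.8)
The plan is to show both conclusions simultaneously by exhibiting a limiting L\'evy process $\eta$ whose exponential functional has law $\mu$. The natural starting point is a coupling: since $\eta_1^{(n)} \stackrel{d}{\to} \eta_1$, the triplets $(\gamma_n, \sigma_n^2, \nu_n)$ of the L\'evy processes $\eta^{(n)}$ converge (in the usual sense for convergence of infinitely divisible laws, cf.\ \cite[Theorem 8.7]{sato}) to the triplet $(\gamma, \sigma^2, \nu)$ of the L\'evy process $\eta$ with $\cL(\eta_1)$ the given limit. Passing to the Skorokhod space, one can realize all the $\eta^{(n)}$ and $\eta$ on a common probability space, independent of $\xi$, with $\eta^{(n)} \to \eta$ in probability in $D([0,\infty),\bR)$ locally uniformly off the jumps; equivalently, use the standard representation of L\'evy processes via their generating triplets so that $\eta^{(n)}_t \to \eta_t$ in probability for each fixed $t$ and the stochastic integrals behave well.

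First I would condition on $\xi = f$ for $P_\xi$-almost every $f \in D([0,\infty),\bR)$, exactly as in the proof of Theorem~\ref{thm-support}. For such $f$, the integral $\int_0^\infty e^{-f(s-)}\,d\eta^{(n)}_s$ is an integral of a deterministic function against a L\'evy process, and by \cite{sato2007} (or \cite{ericksonmaller05}) its convergence and the triplet of its law are explicit functionals of $f$ and of the triplet of $\eta^{(n)}$. Then I would argue two things: (a) convergence of the triplets of $\eta^{(n)}$ forces convergence in distribution of $V_f^{(n)} := \int_0^\infty e^{-f(s-)}\,d\eta^{(n)}_s$ to a limit $V_f$, and simultaneously shows $\int_0^\infty e^{-f(s-)}\,d\eta_s$ converges a.s.\ (using the Erickson--Maller criterion, which only involves the triplet of $\eta$ and the path $f$, together with $f(s)\to+\infty$); and (b) this limit $V_f$ equals $\int_0^\infty e^{-f(s-)}\,d\eta_s$ in law. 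The key integrability/tightness input is that $\cL(V^{(n)}) = \Phi_\xi(\cL(\eta_1^{(n)})) \stackrel{w}{\to} \mu$ is tight; integrating the conditional laws against $P_\xi$ and using tightness of the unconditional laws yields, via a Fatou/dominated argument, that $V_f^{(n)}$ is tight for $P_\xi$-a.e.\ $f$, which upgrades the weak limit to a genuine probability law and rules out mass escaping to infinity. Once $\cL(V_f)$ is identified for a.e.\ $f$, I would integrate: $\mu(B) = \lim_n P(V^{(n)} \in B) = \lim_n \int P(V_f^{(n)} \in B \mid \xi=f)\,dP_\xi(f) = \int P(V_f \in B \mid \xi=f)\,dP_\xi(f) = P(V \in B)$ for continuity sets $B$, where $V = \int_0^\infty e^{-\xi_{s-}}\,d\eta_s$. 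This gives $\cL(\eta_1)\in D_\xi$ and $\Phi_\xi(\cL(\eta_1)) = \mu$.

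The main obstacle I anticipate is step (a)--(b) at the level of almost-every fixed path $f$: showing that convergence of $\eta^{(n)}_1$ in distribution implies convergence of the \emph{stochastic integrals} $\int_0^\infty e^{-f(s-)}\,d\eta^{(n)}_s$ over the \emph{infinite} horizon, not just on compacts. On $[0,T]$ this is routine (continuous-mapping / convergence of triplets of the integral laws), but the tail $\int_T^\infty$ requires uniform control; here one needs that $f(s)\to+\infty$ makes the integrand $e^{-f(s-)}$ decay, combined with a uniform bound on the ``size'' of $\eta^{(n)}$ coming from convergence of triplets (e.g.\ a uniform bound on $\sup_n \int_{|x|>1} \log^+|x|\,\nu_n(dx)$ and on $\sup_n |\gamma_n|$, $\sup_n \sigma_n^2$, plus the Erickson--Maller tail condition being stable under the convergence). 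This is where I would spend the most care; everything else (the conditioning, the triplet formulas, the final integration against $P_\xi$) is bookkeeping modelled on the proof of Theorem~\ref{thm-support}. An alternative route that sidesteps part of this, which I would keep in reserve, is to avoid proving tightness of $V_f^{(n)}$ path-by-path and instead work directly with characteristic functions: show $\phi_{V^{(n)}}(u) \to \phi_\mu(u)$ and $\phi_{V^{(n)}}(u) = \int E[\exp(iu\int_0^\infty e^{-f(s-)}\,d\eta^{(n)}_s)]\,dP_\xi(f)$, pass to the limit inside using dominated convergence (the integrand is bounded by $1$) once the a.e.-$f$ convergence of the conditional characteristic functions is established, and identify the limit as $\phi_{\Phi_\xi(\cL(\eta_1))}(u)$ — but this still needs the a.e.-$f$ convergence, so the infinite-horizon estimate remains the crux.
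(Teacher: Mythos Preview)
Your approach is genuinely different from the paper's, and the obstacle you identify is exactly where the two diverge.

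The paper never attempts to control the infinite-horizon integral directly. Instead it exploits the fixed-point (stationarity) relation of the GOU process: taking $W^{(n)}$ with law $\Phi_\xi(\cL(\eta_1^{(n)}))$ independent of $(\xi,\eta^{(n)})^T$, one has
\[
W^{(n)} \stackrel{d}{=} e^{-\xi_t}\Big( W^{(n)} + \int_0^t e^{\xi_{s-}}\,d\eta_s^{(n)}\Big)
\]
for every \emph{finite} $t>0$. Convergence of the right-hand side as $n\to\infty$ only requires the ``routine'' step you mention---joint convergence of $(e^{-\xi_t},\int_0^t e^{\xi_{s-}}\,d\eta_s^{(n)})$ on compact time intervals---together with the independent convergence $W^{(n)}\stackrel{d}{\to} W$ with $\cL(W)=\mu$. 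This yields $W \stackrel{d}{=} e^{-\xi_t}(W+\int_0^t e^{\xi_{s-}}\,d\eta_s)$ for all $t>0$, i.e.\ $\mu$ is an invariant law for the GOU process driven by $(\xi,\eta)^T$. One then invokes \cite[Theorem~2.1(a)]{behmelindnermaller11}, which says that existence of an invariant law together with $\xi_t\to+\infty$ forces almost-sure convergence of $\int_0^\infty e^{-\xi_{s-}}\,d\eta_s$ and identifies its law as $\mu$. In particular $\cL(\eta_1)\in D_\xi$ is obtained \emph{without} ever verifying the Erickson--Maller integrability condition on the limiting triplet.

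Your route, by contrast, runs into two real difficulties at the point you flag. First, the claim that tightness of the unconditional laws $\mu_n$ yields tightness of the conditional laws $(\cL(V_f^{(n)}))_{n}$ for $P_\xi$-a.e.\ $f$ via a Fatou argument is false in general: take $P_\xi$ uniform on $[0,1]$, intervals $I_n\subset[0,1]$ with $|I_n|=1/n$ chosen so that every $f$ lies in infinitely many $I_n$, and set $\mu_{n,f}=\delta_n$ for $f\in I_n$ and $\delta_0$ otherwise; then $(\mu_n)$ is tight but $(\mu_{n,f})_n$ is tight for \emph{no} $f$. So the mixture structure alone does not give what you need; extra input from the L\'evy structure of the $\eta^{(n)}$ would be required. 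Second, the Erickson--Maller criterion involves a $\log^+$-moment of $\nu_\eta$, and weak convergence $\eta_1^{(n)}\stackrel{d}{\to}\eta_1$ does not propagate such moment conditions to the limit; your suggested uniform bound $\sup_n\int_{|x|>1}\log^+|x|\,\nu_n(dx)<\infty$ is not implied by the hypotheses and would itself need proof. The paper's fixed-point argument bypasses both issues by reducing everything to finite $t$ and deferring the convergence question for the limiting integral to the general theory of GOU invariant laws.
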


\begin{proof}
For $n\in \bN$, let $W^{(n)}$ be a random variable such that
$$
W^{(n)} \stackrel{d}{=} \int_0^\infty e^{-\xi_{s-}} \,
d\eta_s^{(n)} \quad \mbox{and $W^{(n)}$ is independent of
$(\xi,\eta^{(n)})^T$},
$$
where $\eta^{(n)}$ is a L\'evy process
induced by $\eta_1^{(n)}$ independent of $\xi$.
Then the limit $\cL(\eta_1)$ is infinitely divisible by
\cite[Lemma 7.8]{sato}) and we can define $\eta$ as a
L\'evy process induced by $\eta_1$, independent of $\xi$. Let
$W$ be a random variable with distribution $\mu$, independent of
$(\xi,\eta)^T$.  The proof of
\cite[Theorem 7.3]{BehmeLindner13}, more precisely the part leading to Equation
(7.12) there, then shows that for every $t>0$ we have
$$
\left (e^{-\xi_t}, \int_0^t e^{\xi_{s-}} \, d\eta_s^{(n)}\right )^T
\stackrel{d}{\to} \left (e^{-\xi_t}, \int_0^t e^{\xi_{s-}}\, d\eta_s\right )^T,
\quad n\to\infty.
$$
Due to independence this yields
$$
\left (W^{(n)}, e^{-\xi_t}, \int_0^t e^{\xi_{s-}} \, d\eta_s^{(n)}\right )^T
\stackrel{d}{\to} \left (W,e^{-\xi_t}, \int_0^t e^{\xi_{s-}}\, d\eta_s\right )^T,
\quad n\to\infty,
$$
and since $\law(W^{(n)})$ is the invariant distribution of the GOU process driven by $(\xi,\eta^{(n)})^T$, this implies
$$
W^{(n)} \stackrel{d}{=} e^{-\xi_t} \left( W^{(n)} + \int_0^t
e^{\xi_{s-}} \, d\eta_s^{(n)} \right) \stackrel{d}{\to} e^{-\xi_t}
\left( W + \int_0^t e^{\xi_{s-}} \, d\eta_s\right), \quad n\to
\infty.
$$
Since also $W^{(n)} \stackrel{d}{\to} W$ as $n\to\infty$, this shows that
$$
W \stackrel{d}{=} e^{-\xi_t} \left( W + \int_0^t e^{\xi_{s-}} \,
d\eta_s \right)
$$
for any $t>0$,
so that $\mu=\law(W)$ is an invariant distribution of the GOU process driven by $(\xi,\eta)^T$. 
By \cite[Theorem 2.1]{lindnermaller05}, or alternatively \cite[Theorem 2.1~(a)]{behmelindnermaller11},  this shows that $\int_0^\infty
e^{-\xi_{s-}} \, d\eta_s$ converges a.s.,  i.e. $\law(\eta_1)\in
D_\xi$, and that $$\mu = \law(W) = \law\left( \int_0^\infty
e^{-\xi_{s-}} \, d\eta_s\right) = \Phi_\xi(\law(\eta_1)),$$ giving the claim.\qed
\end{proof}

In order to show that $R_\xi$ is closed, we shall first show in Proposition \ref{prop-tight} below that if a sequence
$(\Phi_\xi(\law(\eta_1^{(n)})))_{n\in \bN}$ is tight, then $(\eta_1^{(n)})_{n\in \bN}$ is tight.
To achieve this, observe first that as a consequence of
\cite[Lemma 15.15]{kallenberg} and Prokhorov's theorem, a sequence $(\law(\eta_1^{(n)}))_{n\in \bN}$ of infinitely divisible distributions
on $\bR$ with characteristic triplets $(\gamma_n, \sigma_n^2, \nu_n)$ is tight if and only if
$$
\sup_{n\in \bN} \left| \gamma_n + \int_{\bR} x \left( \frac{1}{1+x^2} - \mathds{1}_{|x|\leq 1}
\right) \, \nu_n(dx) \right| < \infty
$$
and the sequence $(\widetilde{\nu}_n)_{n\in \bN}$ of finite positive measures on $\bR$ with
$$
\widetilde{\nu}_n(dx) = \sigma_n^2 \, \delta_0(dx) + \frac{x^2}{1+x^2} \nu_n(dx)
$$
is weakly relatively compact (in particular, this implies that $\sup_{n\in \bN} \widetilde{\nu}_n(\bR) <\infty$).  Using Prokhorov's theorem for finite measures (e.g. \cite[Theorem 7.8.7]{AshDoleansDade}), it is easy to see that this is equivalent to
\begin{align}
\sup_{n\in \bN} \sigma_n^2  &<  \infty, \label{eq-tight-1}\\
\sup_{n\in \bN} \int_{[-1,1]} x^2 \, \nu_n(dx) & <  \infty, \label{eq-tight-2}\\
\sup_{n\in \bN} \nu_n( \bR \setminus [-r,r])  &<  \infty, \quad \forall\, r > 0, \label{eq-tight-3}\\
\lim_{r\to \infty} \sup_{n\in \bN} \nu_n(\bR \setminus [-r,r]) & =  0, \quad \mbox{and}
\label{eq-tight-4}\\
\sup_{n\in \bN} |\gamma_n| & <  \infty. \label{eq-tight-5}
\end{align}

The following lemma gives direct uniform estimates for $\mu([-r,r])$ in terms of the L\'evy measure
or Gaussian variance of an infinitely divisible distribution $\mu$ which will be needed to prove Proposition \ref{prop-tight}.

\begin{lemma} \label{lem-tight-1}
Let $\mu$ be an infinitely divisible distribution on $\bR$ with characteristic triplet
$(\gamma,\sigma^2,\nu)$.
For $\varepsilon \in (0,1)$ denote by $I_\varepsilon$ the set
$$I_\varepsilon := \{ z \in \bR: 1 - \cos z \geq \varepsilon \}.$$
Then for any $p \in (0,1)$ and $a > 0$, there is some $\varepsilon = \varepsilon(a,p) \in (0,1)$
such that
\begin{equation} \label{eq-tight-6}
\frac{ \lambda^1 ( I_\varepsilon \cap [-y,y])}{\lambda^1([-y,y])} \geq 1-p, \quad \forall\; y \geq a,
\end{equation}
where $\lambda^1$ denotes the Lebesgue measure on $\bR$.
For $\delta > 0$, denote by
$$
\|\nu\|_\delta := \nu (\bR \setminus [-\delta,\delta])
$$
the total mass of $\nu_{| \bR \setminus [-\delta,\delta]}$ and
$$
M(\nu) := \int_{[-1,1]} x^2 \, \nu(dx).
$$
Further, let $c> 0$ be a constant such that
$$
\cos (t)-1 \leq -c t^2, \quad \forall\,  t \in [-1,1].
$$
Then
\begin{align}
\mu ([-r,r])  &\leq  4(e^{-\varepsilon(\delta/r,p) \|\nu\|_\delta} (1-p) + p), \quad \forall\;
p\in (0,1), r,\delta > 0, \label{eq-tight-7}\\
\mu ([-r,r])  &\leq  1 -  \min \{ e^{-\|\nu\|_{2r}}, 1-e^{-\|\nu\|_{2r}/2} \},
 \quad \forall\; r > 0, \label{eq-tight-7b}\\
\mu([-r,r])  &\leq  2r   \int_{-1/r}^{1/r} e^{-M(\nu) ct^2} \, dt, \quad \forall\; r \geq 1,
\quad  \label{eq-tight-8}
\end{align}
and
\begin{align}
\mu([-r,r])  &\leq  2r \int_{-1/r}^{1/r} e^{-\sigma^2 t^2/2} \, dt, \quad \forall\, r > 0. \quad \quad \quad \quad \quad \quad \quad \quad \; { }
\label{eq-tight-9}
\end{align}
\end{lemma}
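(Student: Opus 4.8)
The plan is to deduce all four estimates \eqref{eq-tight-7}--\eqref{eq-tight-9} from one elementary Fourier bound, namely
\[
\mu([-r,r]) \;\le\; 2r\int_{-1/r}^{1/r} |\widehat\mu(t)|\,dt \qquad (r>0),
\]
valid for \emph{every} probability measure $\mu$ on $\bR$. This comes from the Fej\'er kernel: with $T=1/r$,
\[
\int_{-T}^{T}\Big(1-\tfrac{|t|}{T}\Big)\widehat\mu(t)\,dt \;=\; \int_{\bR} T\Big(\tfrac{\sin(xT/2)}{xT/2}\Big)^{2}\mu(dx)\;\ge\;\tfrac{T}{2}\,\mu([-r,r]),
\]
since $(\sin u/u)^2\ge\tfrac12$ for $|u|\le\tfrac12$ and $|xT/2|\le\tfrac12$ for $|x|\le r$, while the left-hand side is at most $\int_{-T}^{T}|\widehat\mu(t)|\,dt$. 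Together with the L\'evy--Khintchine formula \eqref{levykhintchine}, which yields
\[
|\widehat\mu(t)| \;=\; \exp\!\Big(-\tfrac{\sigma^{2}t^{2}}{2}-\int_{\bR}(1-\cos(tx))\,\nu(dx)\Big),
\]
each claimed bound reduces to a lower estimate of the exponent on $[-1/r,1/r]$, where we may discard whichever of the two non-positive summands is not being used.

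Granting this, the three quick cases go as follows. Inequality \eqref{eq-tight-6} is purely geometric: $I_\varepsilon^{c}=\{1-\cos z<\varepsilon\}$ is a $2\pi$-periodic union of intervals centred at the $2k\pi$, each of length $2\arccos(1-\varepsilon)\le C\sqrt\varepsilon$, and at most $y/\pi+2$ of them meet $[-y,y]$, so $\lambda^1(I_\varepsilon^{c}\cap[-y,y])/\lambda^1([-y,y])\le C\sqrt\varepsilon\,(1/(2\pi)+1/a)$ for $y\ge a$, which is $\le p$ once $\varepsilon=\varepsilon(a,p)$ is small enough. For \eqref{eq-tight-9} one inserts $|\widehat\mu(t)|\le e^{-\sigma^{2}t^{2}/2}$ into the Fourier bound. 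For \eqref{eq-tight-8}, fix $c>0$ with $\cos z-1\le-cz^{2}$ on $[-1,1]$; since $r\ge1$, for $|t|\le1/r$ and $|x|\le1$ we have $|tx|\le1$, hence $\int_\bR(\cos(tx)-1)\,\nu(dx)\le-ct^{2}\int_{[-1,1]}x^{2}\,\nu(dx)=-cM(\nu)t^{2}$, so $|\widehat\mu(t)|\le e^{-cM(\nu)t^{2}}$ on $[-1/r,1/r]$, and the Fourier bound concludes.

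The technically richest estimate is \eqref{eq-tight-7}. Put $h(t):=\int_{\{|x|>\delta\}}(1-\cos(tx))\,\nu(dx)$, so $|\widehat\mu(t)|\le e^{-h(t)}$, and let $g(t):=\nu(\{x:|x|>\delta,\ tx\notin I_\varepsilon\})$ with $\varepsilon=\varepsilon(\delta/r,p)$. Since $1-\cos z\ge\varepsilon$ on $I_\varepsilon$, we get $h(t)\ge\varepsilon(\|\nu\|_\delta-g(t))$, hence $e^{-h(t)}\le e^{-\varepsilon\|\nu\|_\delta}e^{\varepsilon g(t)}$. Now regard $t$ as uniform on $[-1/r,1/r]$: by Fubini and the substitution $u=tx$, for each fixed $x$ with $|x|>\delta$ the measure of $\{t\in[-1/r,1/r]:tx\notin I_\varepsilon\}$ is $|x|^{-1}\lambda^1(I_\varepsilon^{c}\cap[-|x|/r,|x|/r])\le 2p/r$ by \eqref{eq-tight-6} (with $y=|x|/r>\delta/r$), so integrating against $\nu|_{\{|x|>\delta\}}$ gives $\tfrac r2\int_{-1/r}^{1/r}g(t)\,dt\le p\|\nu\|_\delta$. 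Finally, as $0\le g\le\|\nu\|_\delta$ and $s\mapsto e^{\varepsilon s}$ is convex, $e^{\varepsilon g(t)}\le 1+(e^{\varepsilon\|\nu\|_\delta}-1)\,g(t)/\|\nu\|_\delta$; averaging and multiplying by $e^{-\varepsilon\|\nu\|_\delta}$ yields $\tfrac r2\int_{-1/r}^{1/r}e^{-h(t)}\,dt\le (1-p)e^{-\varepsilon\|\nu\|_\delta}+p$, whence $\mu([-r,r])\le 4\big((1-p)e^{-\varepsilon\|\nu\|_\delta}+p\big)$, which is exactly \eqref{eq-tight-7}. Arranging the constants to land precisely on this form is the crux, and the convexity step is the trick that makes it work.

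Inequality \eqref{eq-tight-7b} is handled separately, by a compound Poisson decomposition. Write $\mu=\mu'*\rho_\lambda$, where $\rho_\lambda$ is compound Poisson with L\'evy measure $\nu|_{\{|x|>2r\}}$, intensity $\lambda=\|\nu\|_{2r}$ and jump law $\rho_0=\lambda^{-1}\nu|_{\{|x|>2r\}}$ (supported in $\{|x|>2r\}$), and $\mu'$ carries the rest. Realising a $\mu$-variable as $X=X'+Y_1+\dots+Y_N$ with $N\sim\mathrm{Poisson}(\lambda)$, the $Y_i$ i.i.d.\ $\sim\rho_0$ and independent of $X'\sim\mu'$, set $p_k:=P(X'+Y_1+\dots+Y_k\in[-r,r])$. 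Since $[-r,r]-w\subseteq[-2r,2r]$ for $w\in[-r,r]$, we have $\rho_0([-r,r]-w)=0$ there, so conditioning on $X'+Y_1+\dots+Y_k$ gives $p_{k+1}\le P(X'+Y_1+\dots+Y_k\notin[-r,r])=1-p_k$, i.e.\ $p_k+p_{k+1}\le1$ for all $k$. As $\mu([-r,r])=e^{-\lambda}\sum_{k\ge0}\tfrac{\lambda^{k}}{k!}p_k$, for $\lambda\le1$ we pair consecutive even/odd terms, using $\tfrac{\lambda^{2j+1}}{(2j+1)!}\le\tfrac{\lambda^{2j}}{(2j)!}$ and $p_{2j}+p_{2j+1}\le1$, to get $\sum_k\tfrac{\lambda^{k}}{k!}p_k\le\cosh\lambda$, i.e.\ $\mu([-r,r])\le\tfrac12(1+e^{-2\lambda})$; for $\lambda\ge1$, bounding $p_1\le1-p_0$ and $p_k\le1$ ($k\ge2$) gives $\mu([-r,r])\le1-e^{-\lambda}$. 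An elementary computation shows that in both ranges this is $\le1-\min\{e^{-\lambda},1-e^{-\lambda/2}\}$, which is \eqref{eq-tight-7b}. I expect this combinatorial estimate, together with the constant-chasing in \eqref{eq-tight-7}, to be the only places needing genuine care.
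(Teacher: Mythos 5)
Your proof is correct, and while its backbone --- the truncation inequality $\mu([-r,r])\le 2r\int_{-1/r}^{1/r}|\widehat\mu(t)|\,dt$ combined with the L\'evy--Khintchine expression for $|\widehat\mu(t)|$ --- is the same as the paper's (the paper cites Kallenberg's Lemma 5.1 for the inequality you rederive via the Fej\'er kernel, and it treats \eqref{eq-tight-6} as clear, whereas you prove both), two of the four estimates are obtained by genuinely different arguments. For \eqref{eq-tight-7} the paper applies Jensen's inequality with respect to the normalized measure $\nu/\|\nu\|_\delta$ on $\{|x|>\delta\}$, interchanges the integrals, substitutes $z=xt$, and only then invokes \eqref{eq-tight-6} pointwise in $x$; you instead bound the exponent below by $\varepsilon\bigl(\|\nu\|_\delta-g(t)\bigr)$ with $g(t)$ the $\nu$-mass of the ``bad'' set, control the $t$-average of $g$ by Fubini together with \eqref{eq-tight-6}, and finish with the chord bound for the convex function $s\mapsto e^{\varepsilon s}$ on $[0,\|\nu\|_\delta]$ --- your chord bound plays exactly the role Jensen plays in the paper, and it lands on the same constant $4\bigl((1-p)e^{-\varepsilon\|\nu\|_\delta}+p\bigr)$. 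For \eqref{eq-tight-7b} the paper splits off only the large jumps of the sign carrying at least half of $\|\nu\|_{2r}$ and lower-bounds $\mu(\bR\setminus[-r,r])$ by the two disjoint events $\{N_1=0,\,|Z_1|>r\}$ and $\{N_1\ge 1,\,|Z_1|\le r\}$, which gives the minimum immediately as a convex combination; you split off all jumps of modulus $>2r$, establish the recursion $p_k+p_{k+1}\le 1$ for the Poisson-mixture weights, and optimize separately over $\lambda\le1$ and $\lambda\ge1$. Your endgame does close: $1-e^{-\lambda}\le 1-\min\{e^{-\lambda},1-e^{-\lambda/2}\}$ trivially, and for $\lambda\in[0,1]$ one checks $1-e^{-\lambda/2}\le\tfrac12(1-e^{-2\lambda})$, so $\tfrac12(1+e^{-2\lambda})\le 1-\min\{e^{-\lambda},1-e^{-\lambda/2}\}$ as you assert; your route is more combinatorial, the paper's more probabilistically direct. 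Also, your direct pointwise bound for \eqref{eq-tight-8} dispenses with the Jensen step the paper uses there, which is a mild simplification, and \eqref{eq-tight-9} is identical. The only cosmetic omissions are the degenerate cases: when $\|\nu\|_\delta=0$ your chord bound divides by $\|\nu\|_\delta$, and when $\|\nu\|_{2r}=0$ the jump law $\rho_0$ is undefined; both cases make the respective inequalities trivial and should simply be noted, as the paper does.
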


\begin{proof}
Equation \eqref{eq-tight-6} is clear.
Let $r>0$.
Then an application of \cite[Lemma 5.1]{kallenberg} shows
\begin{equation} \label{eq-tight-10}
\mu([-r,r]) \leq 2r \int_{-1/r}^{1/r} |\widehat{\mu}(t)| \, dt = 2r \int_{-1/r}^{1/r}
\exp \left( - \sigma^2 t^2/2 + \int_{\bR} (\cos (xt) -1) \nu(dx)  \right)\, dt
\end{equation}
which immediately gives \eqref{eq-tight-9}. Let $\delta >0$. Equation \eqref{eq-tight-7} is trivial
when $\|\nu\|_\delta = 0$, and for $\|\nu\|_\delta > 0$
observe that by \eqref{eq-tight-10} and Jensen's inequality we can estimate
\begin{align}
\mu([-r,r]) & \leq 2r \int_{-1/r}^{1/r} \exp \left( \int_{|x|>\delta} (\cos (xt) -1)
\|\nu\|_\delta \frac{\nu(dx)}{\|\nu\|_\delta}\right) \, dt \nonumber \\
& \leq  2r \int_{-1/r}^{1/r} \left(\int_{|x|>\delta} e^{(\cos (xt) -1) \|\nu\|_\delta } \,
\frac{\nu(dx)}{\|\nu\|_\delta}\right) dt \nonumber \\
& =  \int_{|x|>\delta} \left( \frac{2r}{|x|} \int_{-|x|/r}^{|x|/r} e^{(\cos z -1)
\|\nu\|_\delta} \, dz \right) \frac{\nu(dx)}{\|\nu\|_\delta} . \label{eq-tight-11}
\end{align}
By \eqref{eq-tight-6} we estimate for $|x|\geq \delta$ and $p\in (0,1)$ with
$\varepsilon = \varepsilon(\delta/r,p)$
\begin{align*}
\frac{2r}{|x|} \int_{-|x|/r}^{|x|/r}& e^{(\cos z -1) \|\nu\|_\delta} \, dz\\
& \leq  \frac{4}{\lambda^1 ([-\frac{|x|}{r}, \frac{|x|}{r}])} \left( e^{-\varepsilon \|\nu\|_\delta}
\lambda^1 \left([-\frac{|x|}{r}, \frac{|x|}{r}] \cap I_\varepsilon\right) + \lambda^1
\left([-\frac{|x|}{r}, \frac{|x|}{r}] \setminus I_\varepsilon\right) \right) \\
&\leq  4(e^{-\varepsilon \|\nu\|_\delta} (1-p) + p),
\end{align*}
which together with \eqref{eq-tight-11} results in \eqref{eq-tight-7}.
Similarly, \eqref{eq-tight-8} is trivial when $M(\nu) = 0$, while for $M(\nu) > 0$ define
the finite measure $\rho$ on $[-1,1]$ by $\rho(dx) = x^2 \nu(dx)$.
We then estimate with
\eqref{eq-tight-10} and Jensen's inequality, for $r\geq 1$,
\begin{align*}
\mu([-r,r]) & \leq  2r \int_{-1/r}^{1/r} \exp \left( \int_{[-1,1]} \frac{\cos (xt) -1}{x^2} M(\nu)
\frac{\rho(dx)}{M(\nu)}  \right)\, dt \\
& \leq  2r \int_{-1/r}^{1/r} \left(  \int_{[-1,1]} \exp \left(\frac{\cos (xt) -1}{x^2} M(\nu)\right)
\frac{\rho(dx)}{M(\nu)}\right) \, dt \\
& \leq 2r \int_{-1/r}^{1/r}  \left( \int_{[-1,1]} e^{-ct^2 M(\nu)}  \frac{\rho(dx)}{M(\nu)} \right)
\, dt ,
\end{align*}
which gives \eqref{eq-tight-8}.
Finally, let us prove Equation \eqref{eq-tight-7b}. This  is again trivial when $\|\nu\|_{2r}=0$, so assume
$\|\nu\|_{2r}> 0$. By symmetry, we can assume without loss of generality that
$$\nu((-\infty,-2r)) \geq \|\nu\|_{2r}/2  > 0.$$
Let $(X_t)_{t\geq 0}$ be a L\'evy process with $\law(X_1)=\mu$, and define
$$Y_t := \sum_{0 < s \leq t, \Delta X_s < -2r} \Delta X_s \quad \mbox{and} \quad Z_t := X_t - Y_t,\quad t \in \bR,$$
where $\Delta X_s := X_s - X_{s-}$ denotes the jump size of $X$ at time $s$. Then $(Y_t)_{t\geq 0}$ and $(Z_t)_{t\geq 0}$
are two independent L\'evy processes, and $(Y_t)_{t\geq 0}$ is a compound Poisson process with L\'evy measure $\nu_{|(-\infty,-2r)}$.
Denote by $(N_t)_{t\geq 0}$ the underlying Poisson process in $(Y_t)_{t\geq 0}$ which counts the number of jumps of $(Y_t)_{t\geq 0}$.
 Then
\begin{eqnarray*}
\mu(\bR \setminus [-r,r]) & = & P(|Y_1+Z_1|> r) \\
& \geq & P(|Z_1|> r, Y_1=0) + P(|Z_1|\leq r, Y_1 < -2r) \\
& = & P(|Z_1|>r) \, P(N_1=0) + P(|Z_1|\leq r)\, P(N_1\geq 1) \\
& = & P(|Z_1|>r) e^{-\nu((-\infty, -2r))} + (1-P(|Z_1|> r)) (1-e^{-\nu((-\infty,-2r))}) \\
& \geq & \min  \{ e^{-\nu((-\infty, -2r))}, 1-e^{-\nu((-\infty, -2r))}\} \\
& \geq &  \min \{ e^{-\|\nu\|_{2r}}, 1-e^{-\|\nu\|_{2r}/2} \},
\end{eqnarray*}
which implies \eqref{eq-tight-7b}.\qed
\end{proof}

The next result is the key step in proving closedness of $R_\xi$.

\begin{proposition} \label{prop-tight}
Let $\xi$ be a L\'evy process drifting to $+\infty$ and
$(\law(\eta^{(n)}_1))_{n\in\bN}$ be a sequence in $D_\xi$ such that
$(\mu_n := \Phi_\xi(\law(\eta^{(n)}_1)))_{n\in \bN}$ is tight. Then also
$(\eta_1^{(n)})_{n\in \bN}$ is tight.
\end{proposition}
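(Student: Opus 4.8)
I want to verify the tightness criteria \eqref{eq-tight-1}--\eqref{eq-tight-5} for the characteristic triplets $(\gamma_n,\sigma_n^2,\nu_n)$ of the $\eta^{(n)}_1$. The first and crucial step is to pass from the exponential functional itself to the finite-horizon integral $C^{(n)}:=\int_{(0,1]}e^{\xi_{s-}}\,d\eta^{(n)}_s$. Since $\xi$ drifts to $+\infty$ and $\cL(\eta^{(n)}_1)\in D_\xi$, choosing $V^{(n)}_0\sim\mu_n$ independent of $(\xi,\eta^{(n)})$ makes the GOU process $(V^{(n)}_t)_{t\ge0}$ strictly stationary by \cite[Theorem 2.1]{lindnermaller05}, so that $V^{(n)}_1=e^{-\xi_1}(V^{(n)}_0+C^{(n)})$ has law $\mu_n$, i.e.\ $V^{(n)}_0+C^{(n)}=e^{\xi_1}V^{(n)}_1$. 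As $\xi_1$ is a fixed, a.s.\ finite random variable and $(\mu_n)$ is tight, $(e^{\xi_1}V^{(n)}_1)_n$ is tight (this needs no independence, only $\{|xy|>r\}\subseteq\{|x|>\sqrt r\}\cup\{|y|>\sqrt r\}$); hence $(V^{(n)}_0+C^{(n)})_n$ is tight, and since $(V^{(n)}_0)_n$ is tight and a difference of two tight sequences is tight, so is $(C^{(n)})_n$. It therefore suffices to derive tightness of $(\eta^{(n)}_1)_n$ from tightness of $\mu^C_n:=\cL(C^{(n)})$.

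I would then condition on $\xi$. Fix $c>0$ so large that $p_0:=P(G)>0$, where $G:=\{\sup_{0\le s\le1}|\xi_s|\le c\}$. For $P_\xi$-a.e.\ path $f$, $C^{(n)}_f:=\int_{(0,1]}e^{f(s-)}\,d\eta^{(n)}_s$ is infinitely divisible with Gaussian variance $\sigma_n^2\int_0^1e^{2f(s)}\,ds$, Lévy measure $B\mapsto\int_0^1\nu_n(e^{-f(s)}B)\,ds$, and a shift equal to $\gamma_n\int_0^1e^{f(s)}\,ds$ plus a correction coming from retruncation; moreover $\mu^C_n=\int\cL(C^{(n)}_f)\,dP_\xi(f)$. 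The whole point of working with $C^{(n)}$ rather than $\int_0^\infty e^{-\xi_{s-}}\,d\eta^{(n)}_s$ is that for $f\in G$ the integrand $e^{f(\cdot-)}$ stays in $[e^{-c},e^c]$ on the \emph{compact} interval $[0,1]$, so the conditional triplet is comparable to $(\gamma_n,\sigma_n^2,\nu_n)$ from \emph{both} sides: for $f\in G$ one has $e^{-2c}\sigma_n^2\le\sigma^2_{C^{(n)}_f}\le e^{2c}\sigma_n^2$, $\|\nu_n\|_{e^c\rho}\le\nu_{C^{(n)}_f}(\bR\setminus[-\rho,\rho])\le\|\nu_n\|_{e^{-c}\rho}$ for all $\rho>0$, $M(\nu_{C^{(n)}_f})\ge e^{-2c}\bigl(M(\nu_n)-\|\nu_n\|_{e^{-c}}\bigr)$, and the correction term in the shift is, for $f\in G$, bounded by a constant depending only on $c$ and $\sup_n\|\nu_n\|_{e^{-c}/2}$.

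Now I read off the five conditions. Tightness of $\mu^C_n$ gives, for every $\epsilon>0$, an $R$ with $\mu^C_n([-R,R])\ge1-\epsilon$ for all $n$, while $\mu^C_n([-R,R])\le(1-p_0)+p_0\sup_{f\in G}\cL(C^{(n)}_f)([-R,R])$. Taking $\epsilon=p_0/2$ and inserting the conditional-triplet bounds into Lemma~\ref{lem-tight-1}: \eqref{eq-tight-9} forces $\sup_n\sigma_n^2<\infty$ (this is \eqref{eq-tight-1}); \eqref{eq-tight-7} with $p=\tfrac1{16}$ forces $\sup_n\|\nu_n\|_{e^c\delta}\le\ln(15)/\varepsilon(\delta/R,\tfrac1{16})<\infty$ for every $\delta>0$, which is \eqref{eq-tight-3}; and \eqref{eq-tight-8} (applied with $R\ge1$) then forces $\sup_nM(\nu_n)<\infty$, i.e.\ \eqref{eq-tight-2}, using that $\sup_n\|\nu_n\|_{e^{-c}}<\infty$ by the previous step. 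For \eqref{eq-tight-4} I argue by contradiction: if $\nu_{n_k}(\bR\setminus[-r_k,r_k])>\eta'$ with $r_k\uparrow\infty$, pass to a subsequence on which, say, the left tail carries half the mass; then for $f\in G$ the two-sided comparison and \eqref{eq-tight-3} give $\nu_{C^{(n_k)}_f}\bigl((-\infty,-e^{-c}r_k)\bigr)\in\bigl(\eta'/2,\ \sup_n\|\nu_n\|_{e^{-2c}}\bigr]$, a bounded interval, so the one-big-jump estimate contained in the proof of \eqref{eq-tight-7b} yields $\cL(C^{(n_k)}_f)(\bR\setminus[-\rho_k,\rho_k])\ge c_3>0$ for all $f\in G$, with $\rho_k:=e^{-c}r_k/2\to\infty$; hence $\mu^C_{n_k}(\bR\setminus[-\rho_k,\rho_k])\ge p_0c_3>0$ with $\rho_k\to\infty$, contradicting tightness. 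Finally \eqref{eq-tight-5}: write $\eta^{(n)}_t=\tilde\eta^{(n)}_t+\gamma_nt$ with $\tilde\eta^{(n)}$ of triplet $(0,\sigma_n^2,\nu_n)$, so that $C^{(n)}_f=\tilde C^{(n)}_f+\gamma_n\kappa_f$ with $\kappa_f:=\int_0^1e^{f(s)}\,ds\ge e^{-c}$ on $G$. Conditions \eqref{eq-tight-1}--\eqref{eq-tight-4}, just established, together with the two-sided comparison show that the family $\{\cL(\tilde C^{(n)}_f):n\in\bN,\ f\in G\}$ is tight, so some $M_0$ bounds all of it below level $\tfrac14$; if a subsequence had $\gamma_{n_k}\to+\infty$ (WLOG, after possibly replacing $\eta^{(n)}$ by $-\eta^{(n)}$), then for $f\in G$ and $k$ large, $\cL(C^{(n_k)}_f)\bigl((-\infty,R]\bigr)\le P\bigl(|\tilde C^{(n_k)}_f|\ge\gamma_{n_k}e^{-c}-R\bigr)\le\tfrac14$, whence $\mu^C_{n_k}([-R,R])\le1-\tfrac34p_0$ for \emph{every} $R$ and all large $k$ --- again contradicting tightness. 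Thus \eqref{eq-tight-1}--\eqref{eq-tight-5} hold and $(\eta^{(n)}_1)_n$ is tight.

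The main obstacle is already the reduction in the first step: for the genuine exponential functional the conditional Lévy measure $\nu_f$ of $\int_0^\infty e^{-\xi_{s-}}\,d\eta_s$ is only \emph{one-sidedly} comparable to $\nu_\eta$ --- its tail masses $\nu_f(\bR\setminus[-\rho,\rho])=\int_0^\infty\nu_\eta(\{|x|>e^{-f(s)}\rho\})\,ds$ can be arbitrarily large, since the scaling factor $e^{-f(s)}$ runs over all of $(0,\infty)$ --- so no estimate on $\mu_n$ of the type used in the third step seems available, and passing to the finite horizon $C^{(n)}$ is exactly what restores the two-sided control. The second delicate point is condition \eqref{eq-tight-5}, where a blow-up of the drift parameter $\gamma_n$ must be turned into a blow-up of $C^{(n)}$ on the good event $G$; this forces the argument to treat \eqref{eq-tight-5} last, after \eqref{eq-tight-1}--\eqref{eq-tight-4} are available, since only then does one know that the centred part $\tilde C^{(n)}_f$ remains tight.
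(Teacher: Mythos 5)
Your proof is correct, but it takes a genuinely different route from the paper at two decisive points. The paper works directly with the infinite-horizon functional: it conditions on $\xi=f$ for paths in a set $A_n$ with $P_\xi(A_n)\ge 1/2$ on which $e^{-f(s)}\in[D_1,D_2]$ for $s\in[0,1]$, and then gets by with only \emph{one-sided} (lower) bounds on the conditional triplet — this suffices for \eqref{eq-tight-1}--\eqref{eq-tight-3} because Lemma~\ref{lem-tight-1} turns lower bounds on the triplet into upper bounds on concentration, and for \eqref{eq-tight-4} the missing upper bound is circumvented by a case distinction on whether the conditional tail mass $\|\nu_{f,n(k)}\|_{D_1\delta_k}$ is large (use \eqref{eq-tight-7}) or small (use \eqref{eq-tight-7b}); so the obstacle you describe as forcing the passage to a finite horizon is in fact not insurmountable. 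Most importantly, the paper proves \eqref{eq-tight-5} by a completely different device: the invariance/generator identity of \cite{BehmeLindner13} (their Corollary 3.2 and Equation (4.1)), showing every term except $\gamma_n\int f'\,d\mu_n$ stays bounded and then choosing $f\in C_c^2(\bR)$ with $\int f'\,d\mu\neq 0$. Your reduction to the one-step increment $C^{(n)}=\int_{(0,1]}e^{\xi_{s-}}\,d\eta^{(n)}_s$ via the stationarity identity $V_0^{(n)}+C^{(n)}=e^{\xi_1}V_1^{(n)}$ buys two things: two-sided comparability of the conditional triplet on $\{\sup_{s\le1}|\xi_s|\le c\}$, which removes the case distinction in \eqref{eq-tight-4}, and an elementary, self-contained treatment of \eqref{eq-tight-5} (drift blow-up pushes $C^{(n)}$ off to infinity because the centred part is uniformly tight), avoiding the generator machinery altogether. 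The only points worth tightening in a write-up are routine: the disintegration $\mu^C_n=\int\cL(C^{(n)}_f)\,dP_\xi(df)$ and the conditional triplet of $C^{(n)}_f$ should be justified exactly as the paper does for $V_f$ (via \cite{sato2007}), and the uniform tightness of the family $\{\cL(\tilde C^{(n)}_f):n\in\bN,\,f\in G\}$ uses the triplet criterion for a \emph{family} rather than a sequence together with the uniformly bounded retruncation shift, which you correctly note; both extensions are standard.
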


\begin{proof}
Denote by $(\gamma_n,\sigma_n^2,\nu_n)$ the characteristic triplet of $\eta_1^{(n)}$.
We have to show that conditions \eqref{eq-tight-1} -- \eqref{eq-tight-5}
are satisfied. Let $n\in \bN$. Since $\int_0^\infty e^{-\xi_{s-}} \, d\eta_s^{(n)}$ converges almost surely and since $\eta^{(n)}$ and $\xi$ are independent, conditioning on $\xi=f$ shows that
$$\left( \left. \int_0^\infty e^{-\xi_{s-}} \, d\eta_s^{(n)} \right| \xi = f \right) = \int_0^\infty e^{-f(s-)} \, d\eta_s^{(n)}$$ for $P_\xi$-almost every $f\in D([0,\infty), \bR)$,
where the integral $\int_0^\infty e^{-f(s-)} \, d\eta_s^{(n)}$ converges almost surely for each such $f$. Further, since $\sup_{s\in [0,1]} |\xi_s| < \infty$ a.s. by the c\`adl\`ag paths of $\xi$, there are $0<D_1\leq 1 \leq D_2 < \infty$ such that 
$$P\left(D_1\leq e^{-\xi_{s}}\leq D_2 \; \forall\; s\in [0,1]\right) \geq 1/2.$$
Consequently there are some measurable sets $A_n\subset D([0,\infty),\bR)$ with $P_\xi(A_n) \geq 1/2$ such that
$$
D_1 \leq e^{-f(s)} \leq D_2 \quad \forall\; f\in A_n, \, s\in [0,1],
$$
and
$$
\int_0^\infty e^{-f(s-)}
\, d\eta_s^{(n)} \; \mbox{converges a.s.,\,\,$\forall f\in A_n$}.
$$
Further, we obtain
\begin{eqnarray}
\mu_n (\bR \setminus [-r,r])
& \geq &   \int_{A_n} P\left(\left|\int_0^\infty e^{-f(s-)}\, d\eta_s^{(n)}\right| > r\right) P_\xi(df) \nonumber \\
& \geq & \frac12 \left( 1 - \sup_{f\in A_n} P\left(  \left|\int_0^\infty e^{-f(s-)}\, d\eta_s^{(n)}\right| \leq r\right)\right) . \label{eq-tight-12}
\end{eqnarray}
For fixed $f\in A_n$ the distribution of $\int_0^\infty e^{-f(s-)} \, d\eta_s^{(n)}$ is
infinitely divisible with Gaussian variance
\begin{equation} \label{eq-tight-13}
\sigma_{f,n}^2 = \sigma_n^2 \int_0^\infty (e^{-f(s)})^2 \, ds \geq D^2_1 \sigma_n^2\end{equation}
and L\'evy measure $\nu_{f,n}$ satisfying
\begin{equation} \label{eq-measure-induction}
\nu_{f,n} (B) = \int_0^{\infty} ds \int_{\bR} \mathds{1}_B (e^{-f(s)} x) \, \nu_n(dx)
\end{equation}
for any Borel set $B\subset \bR\setminus \{0\}$ (cf. \cite[Theorem 3.10]{sato2007}).
In particular, for $f\in A_n$ and any $\delta > 0$,
\begin{eqnarray}
\nu_{f,n} (\bR \setminus [-\delta, \delta]) & \geq& \int_0^1ds \int_{\bR} \mathds{1}_
{\bR \setminus [-\delta e^{f(s)}, \delta  e^{f(s)}]} (x) \, \nu_n (dx) \, \nonumber \\
 & \geq & \nu_n (\bR \setminus
[-\delta/D_1,\delta/D_1]). \quad { } \label{eq-tight-14}
\end{eqnarray}
From \eqref{eq-measure-induction} we obtain
$$\int_{[-1,1]} t^2 \nu_{f,n} (dt)   =
\int_0^{\infty}ds \int_{\bR} \left(e^{-f(s)}x\right)^2 \mathds{1}_{\{|e^{-f(s)}x|\leq 1\}}(x)\,
\nu_n(dx),$$
for $f\in A_n$, hence
\begin{eqnarray}
\int_{[-1,1]} t^2 \nu_{f,n} (dt)
 &\geq& D_1^2  \int_{[-1,1]} x^2 \mathds{1}_{\{|D_2 x| \leq 1\}}(x) \nu_n(dx) \nonumber \\
& = & D_1^2 \int_{[-1/D_2,1/D_2]} x^2 \nu_n(dx). \label{eq-tight-15}
\end{eqnarray}
Now suppose \eqref{eq-tight-1} were violated. Then by \eqref{eq-tight-12},
\eqref{eq-tight-9} and \eqref{eq-tight-13}  we conclude that
$$
\sup_{n\in \bN} \big\{ \mu_n (\bR\setminus [-r,r]) \big\} \geq \frac12 \sup_{n\in \bN}
\left\{  1 -   2r  \int_{-1/r}^{1/r} e^{-D_1^2 \sigma_n^2 t^2/2} \, dt \right\} = \frac12
$$
for every $r>0$, contradicting tightness of  $(\mu_n)_{n\in \bN}$.
Hence \eqref{eq-tight-1} must be true.

Now suppose that \eqref{eq-tight-3} were violated, so that there is some $\delta>0$
such that $\sup_{n\in \bN} \|\nu_n\|_\delta = \infty$ with the notions of Lemma~\ref{lem-tight-1}.
Let $p\in (0,1/4)$ be arbitrary.
Then by \eqref{eq-tight-7} and \eqref{eq-tight-14}, we have for every $f\in A_n$, with $\epsilon=\epsilon(D_1\delta/r,p)$ as defined
in Lemma~\ref{lem-tight-1}, that
\begin{eqnarray}
P\left( \left| \int_0^\infty e^{-f(s-)} \, d\eta_s^{(n)} \right| \leq r\right) & \leq &
4 \left( e^{-\varepsilon (D_1 \delta/r,p) \|\nu_{f,n}\|_{D_1\delta}} (1-p) + p\right) \label{eq-concentration} \\
& \leq & 4  e^{-\varepsilon (D_1 \delta/r,p) \|\nu_{n}\|_{\delta}}(1-p) + 4p. \nonumber
\end{eqnarray}
From \eqref{eq-tight-12} we then obtain that
$$
\sup_{n\in \bN} \big\{ \mu_n (\bR\setminus [-r,r]) \big\}\geq
\frac12 (1-4p)>0, \quad \forall\; r > 0,
$$
which again contradicts tightness of $(\mu_n)_{n\in \bN}$ so that \eqref{eq-tight-3} must hold.

Now suppose that \eqref{eq-tight-4} were violated.
Then there is
some $a>0$ and a sequence $(\delta_k)_{k\in \bN}$ of positive real numbers tending to $+\infty$
and an index $n(k)\in \bN$ for each $k$ such that
$$
\|\nu_{n(k)}\|_{2\delta_k/D_1} \geq a, \quad \forall\, k \in \bN.
$$
Let $p\in (0,1/4)$ be arbitrary and choose $\varepsilon = \varepsilon(D_1,p)$ as in Lemma~\ref{lem-tight-1}. Let $b>0$
be such that
$$b_1 := 4 \left(  e^{-\varepsilon(D_1,p) b} (1-p) + p\right) < 1.$$
Let $f\in A_n$. Then if $\|\nu_{f,n(k)}\|_{D_1\delta_k} \geq b$ we have
$$P\left( \left| \int_0^\infty e^{-f(s-)} \, d\eta_s^{(n(k))} \right| \leq \delta_k\right) \leq b_1 <1$$
by \eqref{eq-concentration}, while if $\|\nu_{f,n(k)}\|_{D_1 \delta_k} < b$ we obtain from \eqref{eq-tight-7b} and \eqref{eq-tight-14} that
\begin{eqnarray*}
P\left( \left| \int_0^\infty e^{-f(s-)} \, d\eta_s^{(n(k))} \right| \leq \delta_k\right) & \leq & 1 - \min \{ e^{-\|\nu_{f,n(k)}\|_{2\delta_k}},
 1- e^{-\|\nu_{f,n(k)}\|_{2\delta_k}/2} \}\\
 & \leq & 1- \min \{e^{-b}, 1 - e^{-\|\nu_{n(k)}\|_{2\delta_k/D_1}/2} \} \\
 & \leq & 1- \min\{ e^{-b}, 1-e^{-a/2} \}.
 \end{eqnarray*}
From \eqref{eq-tight-12} we then conclude
$$
\mu_{n(k)} (\bR \setminus [-\delta_k,\delta_k]) \geq \frac12 \left( 1- \max \{ b_1, 1-e^{-b}, e^{-a/2} \}\right) >0 \quad \forall\, k\in \bN.
$$
In particular,
$$
\limsup_{r\to\infty} \sup_{n\in \bN} \left\{ \mu_n (\bR \setminus [-r,r])\right\}
\geq \frac12 \left( 1- \max \{ b_1, 1-e^{-b}, e^{-a/2} \}\right) >0,
$$
which again contradicts tightness of $(\mu_n)_{n\in \bN}$.
We conclude that also \eqref{eq-tight-4} must be valid.

Now suppose that \eqref{eq-tight-2} were violated, but \eqref{eq-tight-3} holds.
Then by \eqref{eq-tight-12}, \eqref{eq-tight-8}, \eqref{eq-tight-15} and with $c$ from Lemma~\ref{lem-tight-1} we have for every $r\geq 1$
\begin{eqnarray*}
\lefteqn{\sup_{n\in \bN} \big\{ \mu_n (\bR\setminus [-r,r]) \big\}}\\ &\geq& \frac12 \sup_{n\in \bN}
\left\{  1 -   2r  \int_{-1/r}^{1/r} \exp \left( -D_1^2 c t^2 \int_{[-1/D_2,1/D_2]} x^2 \,
\nu_n(dx) \right) dt \right\} = \frac12,
\end{eqnarray*}
where we have used that \eqref{eq-tight-3} together
with  $\sup_{n\in \bN} \int_{[-1,1]} x^2 \, \nu_n(dx) = \infty$ imply \linebreak
$\sup_{n\in \bN} \int_{[-1/D_2,1/D_2]} x^2 \, \nu_n(dx) = \infty$.
This again contradicts tightness of $(\mu_n)_{n\in \bN}$
so that \eqref{eq-tight-2} must hold.

Finally, suppose  that \eqref{eq-tight-5} were violated but that \eqref{eq-tight-1}--\eqref{eq-tight-4} hold. Then there is a subsequence of
$(\gamma_n)_{n\in \bN}$
which diverges to $+\infty$ or $-\infty$, and without loss of generality assume that this is $(\gamma_n)_{n\in \bN}$.
Since $(\mu_n)_{n\in \bN}$ is tight by assumption,
there is a subsequence of $(\mu_n)_{n\in \bN}$ which converges weakly,
and for the convenience of notation assume again that $(\mu_n)_{n\in \bN}$ converges
weakly to some distribution $\mu$.
Let the L\'evy process $U$ with characteristic triplet
$(\gamma_U,\sigma_U^2, \nu_U)$ be related to $\xi$ by $\cE(U)_t = e^{-\xi_t}$, where $\cE(U)$ denotes the stochastic exponential of $U$.
Then it follows from \cite[Corollary 3.2 and Equation (4.1)]{BehmeLindner13} that
\begin{align*}
\gamma_n \int_{\bR} f'(x)  \, \mu_n(dx) 
& = -\frac12 \sigma_n^2 \int_{\bR} f''(x) \, \mu_n(dx)  \\
& \hskip 5mm -\int_{\bR}\mu_n(dx)
\int_{\bR} \left( f(x+y) -f(x) -
f'(x) y\mathds{1}_{|y|\leq 1} \right) \nu_n(dy)   \\
& \hskip 5mm - \gamma_U \int_{\bR} f'(x) x \, \mu_n(dx) -
\frac12 \sigma_U^2 \int_{\bR} f''(x) x^2 \, \mu_n(dx) \\
& \hskip 5mm  - \int_{\bR}\mu_n(dx) \int_{\bR} \left( f(x+xy) - f(x) - f'(x)
xy \mathds{1}_{|y|\leq 1}\right)
\, \nu_U(dy)
\end{align*}
for every function $f\in C_c^2(\bR)$. Consider the right hand side of this equation.
The first summand remains bounded in $n$ by \eqref{eq-tight-1} and weak convergence of $\mu_n$,
and the second remains bounded in $n$ by \eqref{eq-tight-2} and \eqref{eq-tight-3}, since
$$
|f(x+y) - f(x) - f'(x) \mathds{1}_{|y|\leq 1}| \leq 2 \|f\|_\infty \mathds{1}_{|y|>1}
+ \|f''\|_\infty y^2 \mathds{1}_{|y|\leq 1}
$$
(cf. \cite[Proof of Lemma 4.2]{BehmeLindner13}), where $\|\cdot\|_\infty$ denotes the supremum norm.
The third and fourth summands converge by weak convergence of $\mu_n$, and the fifth
summand remains bounded in $n$ by \cite[Equation (3.6)]{BehmeLindner13} (actually, the fifth summand
can be shown to converge).
We conclude  also that $\gamma_n \int_{\bR} f'(x) \mu_n(dx)$ must be bounded in $n$ for every
$f\in C_c^2(\bR)$.
Choosing $f\in C_c^2(\bR)$ such that $\int_{\bR} f'(x) \, \mu(dx) \neq 0$,
we obtain that $(\gamma_n)_{n\in \bN}$ must be bounded and hence the desired contradiction.
Summing up, we have verified \eqref{eq-tight-1} -- \eqref{eq-tight-5} so that
$(\eta_1^{(n)})_{n\in \bN}$ must be tight.\qed
\end{proof}

Now define
\begin{align*}
D_\xi^+ & :=  \{ \law(\eta_1) \in D_\xi : \eta_1 \geq 0 \;
\mbox{a.s.} \},\\
\Phi_\xi^+ & :=  (\Phi_\xi)|_{D_\xi^+},
\end{align*}
and
\begin{align*}
R_\xi^+  &:=  \Phi_\xi (D_\xi^+) = \Phi_\xi^+(D_\xi^+). \quad \quad \; \, \,  { }
\end{align*}
By Corollary \ref{cor-positive-subordinator},
$$
R_\xi^+ = R_\xi \cap \{ \mu \in \mathcal{P}(\bR): \supp \mu \subset [0,\infty) \}.
$$

We now show closedness of $R_\xi$ under weak convergence and that the inverse of $\Phi_\xi$ (provided that
it exists) is continuous.

\begin{theorem} \label{thm-cont-inverse1}
Let $\xi=(\xi_t)_{t\geq 0}$ be a L\'evy process drifting to
$+\infty$.
\begin{enumerate}
 \item Then $R_\xi$ and $R_\xi^+$ are closed under weak convergence.
\item If $\Phi_\xi$ is injective,
then the inverse $\Phi_\xi^{-1}:R_\xi \to D_\xi$ is
continuous with respect to the topology induced by weak convergence.
\item The inverse  $(\Phi_\xi^+)^{-1}:R_\xi^+ \to
D_\xi^+$ is continuous.
\end{enumerate}
\end{theorem}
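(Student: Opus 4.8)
The plan is to derive all three statements from the two structural facts already in hand: that $\Phi_\xi$ is a closed mapping (Proposition~\ref{prop-closed-mapping}) and that tightness of $\big(\Phi_\xi(\law(\eta_1^{(n)}))\big)_{n\in\bN}$ forces tightness of $(\eta_1^{(n)})_{n\in\bN}$ (Proposition~\ref{prop-tight}). The glue is the standard observation that $\cP(\bR)$ with the weak topology is metrizable, so closedness and continuity may be tested along sequences, and a sequence in a metric space converges to a given point precisely when it is relatively compact and every subsequential limit equals that point.

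For part (i), I would start from a sequence $\mu_n=\Phi_\xi(\law(\eta_1^{(n)}))\in R_\xi$ with $\mu_n\stackrel{w}{\to}\mu$. Weak convergence makes $(\mu_n)_{n\in\bN}$ tight, so Proposition~\ref{prop-tight} gives tightness of $(\eta_1^{(n)})_{n\in\bN}$; by Prokhorov there is a subsequence with $\eta_1^{(n_k)}\stackrel{d}{\to}\eta_1$, and Proposition~\ref{prop-closed-mapping} applied along it yields $\law(\eta_1)\in D_\xi$ and $\Phi_\xi(\law(\eta_1))=\mu$, i.e.\ $\mu\in R_\xi$. For $R_\xi^+$ I would additionally use Corollary~\ref{cor-positive-subordinator} to take the $\eta^{(n)}$ to be subordinators, so that $\eta_1^{(n)}\geq 0$ a.s.\ and hence $\eta_1\geq 0$ a.s., giving $\law(\eta_1)\in D_\xi^+$ and $\mu\in R_\xi^+$.

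For parts (ii) and (iii) the argument is the same uniqueness-of-subsequential-limits scheme, now using injectivity. With $\Phi_\xi$ injective (respectively $\Phi_\xi^+$, which is always injective on positive measures by \cite[Remark~5.4]{BehmeLindner13}), the inverse is well defined on the range, which by (i) is closed. Given $\mu_n\stackrel{w}{\to}\mu$ with preimages $\law(\eta_1^{(n)})$, Proposition~\ref{prop-tight} makes $(\eta_1^{(n)})_{n\in\bN}$ tight; any subsequential distributional limit $\zeta$ satisfies $\Phi_\xi(\law(\zeta))=\mu$ by Proposition~\ref{prop-closed-mapping}, so $\law(\zeta)$ is the unique preimage of $\mu$ by injectivity --- and in the positive case $\zeta\geq 0$ a.s.\ automatically, so the preimage lands in $D_\xi^+$. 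A tight sequence with a single subsequential limit converges, which is the asserted continuity of $\Phi_\xi^{-1}$ (resp.\ $(\Phi_\xi^+)^{-1}$).

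I do not expect a real obstacle: all the analytic work --- the concentration inequalities of Lemma~\ref{lem-tight-1} and the tightness transfer --- is already packaged into Proposition~\ref{prop-tight}, and the identification of the GOU fixed point into Proposition~\ref{prop-closed-mapping}. The only points needing a little care are justifying that sequential arguments suffice (metrizability of weak convergence), checking that nonnegativity passes to distributional limits in the $D_\xi^+$ statements, and --- in (ii) --- noting that injectivity is exactly what upgrades closedness of the range into single-valuedness, hence continuity, of the inverse.
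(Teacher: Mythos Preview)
Your proposal is correct and follows essentially the same route as the paper: both use Proposition~\ref{prop-tight} to transfer tightness to the preimages, Prokhorov to extract subsequential limits, and Proposition~\ref{prop-closed-mapping} to identify them, then injectivity to pin down the limit for continuity of the inverse. The only cosmetic difference is in the closedness of $R_\xi^+$: the paper uses the identity $R_\xi^+ = R_\xi \cap \{\mu: \supp\mu\subset[0,\infty)\}$ and intersects two closed sets, whereas you argue directly that the subsequential limit $\eta_1$ inherits nonnegativity from the $\eta_1^{(n)}$; both are equally valid.
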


\begin{proof}
(i) Let $(\mu_n= \Phi_\xi(\law(\eta_1^{(n)})))_{n\in \bN}$ be a sequence in $R_\xi$
which converges weakly to some
$\mu\in \cP(\bR)$. Then $(\mu_n)_{n\in \bN}$ is tight, and by
Proposition \ref{prop-tight}, $(\eta_1^{(n)})_{n\in \bN}$ must be tight, too.
Hence there is a subsequence $(\eta_1^{(n_k)})_{k\in \bN}$ which converges
weakly to some random variable $\eta_1$. It then follows from
Proposition \ref{prop-closed-mapping} that also $\law(\eta_1) \in D_\xi$ and
that $\Phi_\xi(\law(\eta_1)) = \mu$. Hence $\mu\in R_\xi$ so that $R_\xi$ is closed.
Since $\{ \mu \in \mathcal{P}(\bR): \supp \mu \subset [0,\infty) \}$ is closed,
this gives also closedness of $R_\xi^+$.

(ii) Let $(\mu_n= \Phi_\xi(\law(\eta_1^{(n)})))_{n\in \bN}$ be a sequence in $R_\xi$ which converges weakly to some $\mu$.
By Proposition \ref{prop-tight}, $(\eta_1^{(n)})_{n\in \bN}$ is tight.
Let $(\eta_1^{(k_n)})_{k\in \bN}$ be a subsequence which converges weakly to some $\eta_1$, say.
Then $\law(\eta_1) \in D_\xi$ and $\Phi_\xi (\law(\eta_1)) = \mu$ by
Proposition \ref{prop-closed-mapping}, and since $\Phi_\xi$ is injective we have
$\law(\eta_1) = \Phi_\xi^{-1} (\mu)$. Since the convergent subsequence was arbitrary,
this shows that $\law(\eta_1^{(n)}) = \Phi_\xi^{-1} ( \mu_n)$ converges weakly to
$\Phi_\xi^{-1}(\mu)$ as $n\to \infty$ (cf. \cite[Corollary to Theorem 25.10]{Billingsley1995}).
Hence $\Phi_\xi$ is continuous.

(iii) This can be proved in complete analogy to (ii).\qed
\end{proof}

\begin{remark} \label{rem-closed-positive}
{\rm Closedness of $R_\xi^+$ under weak convergence and continuity of $(\Phi_\xi^+)^{-1}$
can also be proved in a simpler way by circumventing Proposition \ref{prop-tight} but
using a formula for the Laplace transforms of
$\eta_1^{(n)}$ and $\mu_n$ (cf. \cite[Remark 4.5]{BehmeLindner13}, or Theorem \ref{thm-range-condition} below), and showing that
$\mu^{(n)} \stackrel{w}{\to} \mu$ implies convergence of the Laplace transforms of $\eta_1^{(n)}$.
A similar approach for showing closedness of $R_\xi$ is not evident since there is not a
similarly convenient formula for the Fourier transforms available, but only one in
terms of suitable two-sided
limits (cf. \cite[Equation (4.7)]{BehmeLindner13}).}
\end{remark}

As a consequence of Theorem \ref{thm-cont-inverse1}, we can now show that $R_\xi$ will not be closed
under convolution if $\xi$ is non-deterministic and satisfies a suitable moment condition.
We conjecture
that $R_\xi$ will never be closed under convolution unless $\xi$ is deterministic.

\begin{corollary} \label{cor-noconvolutions}
Let $\xi=(\xi_t)_{t\geq 0}$ be a non-deterministic L\'evy process drifting to $+\infty$ such that
$E[(e^{-2\xi_1})]<1$.
Then $R_\xi$ is not closed under convolution.
\end{corollary}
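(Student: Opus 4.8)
The plan is to argue by contradiction: assume $R_\xi$ is closed under convolution, and derive that $\xi$ must be deterministic. The starting point is the Dufresne-type observation that the deterministic Gamma-type exponential functionals are available to us: for $\xi$ deterministic, $\int_0^\infty e^{-bs}\,d\eta_s$ with $\eta$ a suitable subordinator gives exactly the selfdecomposable positive laws, and in particular, fixing our $\xi$, one can ask whether $\mu = \Phi_\xi(\cL(\eta_1))$ lies in $R_\xi$ together with all its convolution powers. The natural candidate to exploit is an exponential distribution (or a fixed positive stable law): if $\mathrm{Exp}(\lambda)\in R_\xi^+$ — which holds for a suitable choice of $\eta$, e.g. when $\eta$ is itself a drift plus compound Poisson making the integral exponentially distributed, or simply by taking $\eta$ deterministic if $\xi$ were deterministic — then closedness under convolution would force $\Gamma(n,\lambda)\in R_\xi^+$ for all $n\in\bN$, and after rescaling $\mu_n := \cL(\Gamma(n,\lambda)/n)\in R_\xi^+$. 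By the law of large numbers $\mu_n \stackrel{w}{\to}\delta_{1/\lambda}$, and by Theorem~\ref{thm-cont-inverse1}(i) the limit $\delta_{1/\lambda}$ must lie in $R_\xi^+$.

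The crux is then to show that a non-degenerate positive constant, i.e.\ $\delta_c\in R_\xi$ with $c>0$, forces $\xi$ to be deterministic. Suppose $V = \int_0^\infty e^{-\xi_{s-}}\,d\eta_s = c$ a.s. for some L\'evy process $\eta$ independent of $\xi$. Using the distributional fixed-point equation $V\stackrel{d}{=}e^{-\xi_t}(V + \int_0^t e^{\xi_{s-}}\,d\eta_s)$ valid for every $t>0$ (as in the proof of Proposition~\ref{prop-closed-mapping}), with $V\equiv c$ on both sides and $V$ independent of $(\xi,\eta)^T$, we get
$$
c = e^{-\xi_t}\Big( c + \int_0^t e^{\xi_{s-}}\,d\eta_s\Big)\quad\text{a.s., for every }t>0,
$$
equivalently $\int_0^t e^{\xi_{s-}}\,d\eta_s = c(e^{\xi_t}-1)$ a.s. Comparing the quadratic variations of the two semimartingales on $[0,t]$ forces $A_\eta \int_0^t e^{2\xi_{s-}}\,ds = c^2 A_\xi \int_0^t e^{2\xi_{s-}}\,ds$ for the Gaussian parts, and a matching of jumps forces $\Delta\eta_s = c\,e^{\xi_{s-}}(e^{\Delta\xi_s}-1)$ for all jump times $s$; since $\eta$ and $\xi$ are independent and $\eta$ cannot depend on $\xi_{s-}$ unless there are no jumps, this already pushes toward $\nu_\xi\equiv 0$ and $A_\xi=0$. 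Then, with $\xi_t = \gamma t$ deterministic, $\int_0^t e^{\gamma s}\,d\eta_s = c(e^{\gamma t}-1)$ forces $\eta_s=c\gamma s$ and $\gamma>0$, consistent with the classical deterministic case; the point is that no non-deterministic $\xi$ survives. This contradicts $\xi$ being non-deterministic.

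The role of the moment hypothesis $E[e^{-2\xi_1}]<1$ is to guarantee, via $E[e^{-2\xi_t}] = E[e^{-2\xi_1}]^t \to 0$, that the rescaled Gamma laws actually lie in $R_\xi^+$: one needs a starting exponential law in $R_\xi^+$, and the Erickson–Maller criterion together with the finite-variance control ensures that a drift-type $\eta$ producing an exponential (equivalently, that $\Phi_\xi$ maps some explicit $\cL(\eta_1)$ to $\mathrm{Exp}(\lambda)$, or at least that $R_\xi^+$ contains a one-parameter family whose convolution powers can be rescaled to a point mass) is admissible. Concretely, I would either invoke that $\Phi_\xi^+$ is surjective onto a rich enough class, or — more robustly — start from \emph{any} $\mu\in R_\xi^+$ that is not a point mass (such a $\mu$ exists since $\xi$ is non-deterministic, e.g.\ by Theorem~\ref{thm-support} the support is a genuine interval for suitable $\eta$), form $\mu^{*n}$ rescaled by $1/n$, and note these lie in $R_\xi$ by the assumed convolution-closedness; then the CLT-type/LLN-type scaling limit is a point mass, giving the contradiction as above.

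The main obstacle I anticipate is the rescaling step: $R_\xi$ need not be closed under the deterministic dilation $x\mapsto x/n$, so I cannot simply say $\cL(X/n)\in R_\xi$ from $\cL(X)\in R_\xi$. I expect the authors handle this by choosing the initial law in $R_\xi^+$ so that its convolution powers are \emph{already} the right family — for instance taking $\mu = \mathrm{Exp}(\lambda)$ so that $\mu^{*n} = \Gamma(n,\lambda)$, and then observing that $\Gamma(n,n\lambda_0)$ for fixed $\lambda_0$ can be obtained as $\mu_n^{*n}$ with $\mu_n = \mathrm{Exp}(n\lambda_0)$ already in $R_\xi^+$ (each exponential law being reachable by scaling the L\'evy triplet of $\eta$, which \emph{is} permissible since rescaling $\eta$ rescales $V$). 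In other words, scaling is pushed onto $\eta$, not onto $V$ after the fact. If even the single exponential law $\mathrm{Exp}(\lambda)\in R_\xi^+$ is not automatic, the fallback is the dilation identity $\Phi_{\xi}(\cL(c\eta_1)) = c\,\Phi_\xi(\cL(\eta_1))$, which shows $R_\xi^+$ is closed under positive dilations, so from a single non-degenerate $\mu\in R_\xi^+$ one gets a two-parameter family and can run the LLN argument cleanly.
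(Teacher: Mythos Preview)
Your core strategy is valid but genuinely different from the paper's. The paper takes a \emph{symmetric} $\eta$ (a compound Poisson process with L\'evy measure $\delta_{-1}+\delta_1$), so that $V$ is symmetric with $E[V]=0$; the hypothesis $E[e^{-2\xi_1}]<1$ is invoked precisely to guarantee $E[V^2]<\infty$ (via a moment result for exponential functionals), and then the CLT gives $\cL(n^{-1/2}(V_1+\cdots+V_n))\to \cN(0,\var(V))$. Convolution closedness together with dilation closedness (implicit in the $n^{-1/2}$ scaling, exactly as you note) and closedness under weak limits would then force a nondegenerate centered Gaussian into $R_\xi$, which is excluded by an earlier result (\cite[Theorem 6.4]{BehmeLindner13}). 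Your route instead uses the LLN to drive convolution powers (after rescaling by $1/n$) of a positive $\mu\in R_\xi^+$ with finite mean to a point mass $\delta_c$ with $c>0$, and then argues $\delta_c\notin R_\xi$ for nondeterministic $\xi$. Both arguments hinge on the same three closure properties; the difference is in the limit object (Gaussian vs.\ Dirac) and the exclusion principle invoked.

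Two corrections are worth making. First, your account of the role of $E[e^{-2\xi_1}]<1$ is off: it is not about Erickson--Maller admissibility of $\eta$, but about guaranteeing finite moments of $V$ so that a limit theorem applies. For your LLN route you only need $E[V]<\infty$, which (for $\eta_t=t$, say) follows from $E[e^{-\xi_1}]<1$, and this in turn is implied by the stated hypothesis via $(E[e^{-\xi_1}])^2\leq E[e^{-2\xi_1}]<1$. Second, your exclusion of $\delta_c$ via matching quadratic variations and jumps in the fixed-point identity can be made to work, but it is heavier than necessary: the support results of Section~\ref{S2} (Lemma~\ref{lem-support} for deterministic $\eta$, Theorem~\ref{thm-support} otherwise) already show that for nondeterministic $\xi$ the support of $\cL(V)$ is never a single nonzero point. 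Your opening detour through Gamma and exponential laws is a red herring; the dilation identity $\Phi_\xi(\cL(c\eta_1))=\cL(cV)$ you identify at the end is exactly what is needed, and it suffices to start from any nondegenerate $\mu\in R_\xi^+$ with finite mean.
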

\begin{proof}
Let $(\eta_t)_{t\geq 0}$ be a symmetric compound Poisson process with L\'evy measure
$\nu=\delta_{-1}+\delta_1$, where $\delta_a$ denotes the Dirac measure at $a$.
Then $\cL(\eta_1)\in D_\xi$ and $V:=\int_0^\infty e^{-\xi_{s-}}d\eta_s$ is symmetric, too,
and since by \cite[Theorem 3.3]{behme2011} we have $E[V^2]<\infty$, this yields $E[V]=0$.
Now let $(V_i)_{i\in\NN}$ be an i.i.d. family of independent copies of $V$.
Then by the Central Limit Theorem,
$$\cL\left(n^{-\frac{1}{2}}(V_1+\ldots+ V_n)\right) \to \cN (0,\var(V)), \quad n\to \infty,$$
with $\var(V)\neq 0$.
If the range $R_\xi$ was closed under convolution, we consequently had $\cL(n^{-\frac{1}{2}}(V_1+\ldots + V_n))
\in R_\xi$ and due to closedness of $R_\xi$ under weak convergence
this gave $\cN (0,\var(V))\in R_\xi$.
This contradicts \cite[Theorem 6.4]{BehmeLindner13}.\qed
\end{proof}

\section{A general criterion for a positive distribution to be in the range} \label{S4}

From this section on, we restrict ourselves to positive distributions in $R_\xi$, i.e. we only consider $\Phi_\xi^+$ and $R_\xi^+$ as defined in Section \ref{S3}. We start by giving a general criterion to decide whether a positive distribution is in the range $R_\xi^+$  of $\Phi_\xi^+$ for a given L\'evy process $\xi$.

\begin{theorem} \label{thm-range-condition}
Let $\xi$ be a L\'evy process drifting to $+\infty$.
Let $\mu = \law (V)$  be a probability measure on
$[0,\infty)$ with Laplace exponent $\psi_V$.
Then $\mu \in R_\xi^+$ if and only if the function
\begin{align}
g_\mu &: (0,\infty) \to  \bR \nonumber\\
g_\mu(u)& :=  (\gamma_\xi - \frac{\sigma_\xi^2}{2}) u \psi_V'(u) - \frac{\sigma_\xi^2}{2} u^2
\left( \psi_V''(u) + (\psi_V'(u))^2\right) \nonumber \\
&  \hskip 10mm - \int_{\bR} \left( e^{\psi_V(ue^{-y}) - \psi_V(u)} - 1 + u \psi_V'(u) y
\mathds{1}_{|y|\leq 1} \right) \, \nu_\xi(dy), \quad u >0, \label{eq-range-cond-0}
\end{align}
defines the Laplace exponent of some subordinator $\eta$, i.e.
if there is some subordinator $\eta$  such that
\begin{equation} \label{eq-range-cond-1}
E \left[e^{-\eta_1 u}\right] = e^{g_\mu(u)}, \quad \forall\; u > 0.
\end{equation}
In that case, $\Phi_\xi(\law (\eta_1)) = \mu$.
\end{theorem}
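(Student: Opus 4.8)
The plan is to characterize membership $\mu \in R_\xi^+$ through the stationarity equation for the associated GOU process, and to translate that equation into the stated ODE/integro-differential identity for the Laplace exponent $\psi_V$. Recall from Corollary~\ref{cor-positive-subordinator} that $\mu \in R_\xi^+$ precisely when there is a subordinator $\eta$, independent of $\xi$, with $\mathcal{L}(\eta_1) \in D_\xi$ and $\Phi_\xi(\mathcal{L}(\eta_1)) = \mu$; by \cite[Theorem 2.1]{lindnermaller05} (or \cite[Theorem 2.1(a)]{behmelindnermaller11}) this is equivalent to $\mu$ being \emph{the} stationary distribution of the GOU process driven by $(\xi,\eta)^T$, i.e. to $V \stackrel{d}{=} e^{-\xi_t}(V + \int_0^t e^{\xi_{s-}}\,d\eta_s)$ for all (equivalently, some) $t>0$, with $V$ independent of $(\xi,\eta)^T$. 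So the task reduces to: given the pair $(\xi,\mu)$, find the candidate $\eta$ (if any) and show that $\eta$ is a subordinator exactly when $g_\mu$ is the Laplace exponent of one.

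The key computation is the infinitesimal generator identity. First I would pass from $e^{-\xi_t}$ to the stochastic exponential: let $U$ be the L\'evy process with $\mathcal{E}(U)_t = e^{-\xi_t}$, whose triplet $(\gamma_U,\sigma_U^2,\nu_U)$ is explicitly related to $(\gamma_\xi,\sigma_\xi^2,\nu_\xi)$ by the standard change-of-variables formula (the same one used in \cite[Corollary 3.2]{BehmeLindner13}). The stationarity of $\mu$ for the GOU process means $\mu$ is invariant for the Markov process $V_t = \mathcal{E}(U)_t(V_0 + \int_0^t \mathcal{E}(U)_{s-}^{-1}\,d\eta_s)$, and invariance is equivalent to $\int \mathcal{A} f \, d\mu = 0$ for all $f$ in a suitable test class, where $\mathcal{A}$ is the generator of the GOU process. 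Following \cite[Corollary 3.2 and Equation (4.1)]{BehmeLindner13}, writing this out for $f(x) = e^{-ux}$ (after justifying that exponentials may be used as test functions — e.g. by a limiting/truncation argument from $C_c^2$, using that $\mu$ is supported on $[0,\infty)$ and has finite exponential moments for $u>0$ since $\psi_V$ is assumed to exist on $(0,\infty)$) converts every term into an expression in $\psi_V$ and its derivatives: the drift and Gaussian parts of $\xi$ produce the $(\gamma_\xi - \sigma_\xi^2/2)u\psi_V'(u)$ and $-\tfrac{\sigma_\xi^2}{2}u^2(\psi_V''(u)+(\psi_V'(u))^2)$ terms (the $(\psi_V')^2$ arising because $\frac{d^2}{dx^2}e^{-ux}$ integrated against $\mu$ yields $u^2\widehat{\mu}$-type quantities that factor through $e^{\psi_V}$), the jump part of $\xi$ produces the $\nu_\xi$-integral with the multiplicative shift $u \mapsto ue^{-y}$ inside $\psi_V$, and the contribution of $\eta$ collapses to exactly $\psi_\eta(u)$ times $e^{\psi_V(u)}$. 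Dividing through by $e^{\psi_V(u)} \ne 0$ isolates $\psi_\eta(u) = g_\mu(u)$, which both defines $g_\mu$ and shows it is forced.

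From here the logic closes quickly. For the "only if" direction: if $\mu \in R_\xi^+$, pick the witnessing subordinator $\eta$; the computation above shows its Laplace exponent must equal $g_\mu$, so in particular $g_\mu$ is a Laplace exponent of a subordinator and \eqref{eq-range-cond-1} holds. For the "if" direction: suppose $g_\mu = \psi_\eta$ for some subordinator $\eta$, taken independent of $\xi$; run the generator identity in reverse to conclude $\int \mathcal{A}f\,d\mu = 0$ for all test $f = e^{-u\cdot}$, hence (by density of such exponentials in the relevant sense, or by directly matching Laplace transforms in the stationarity equation) $\mu$ is invariant for the GOU process driven by $(\xi,\eta)^T$; then $\mathcal{L}(\eta_1) \in D_\xi$ and $\Phi_\xi(\mathcal{L}(\eta_1)) = \mu$ by \cite[Theorem 2.1]{lindnermaller05}, giving $\mu \in R_\xi^+$ and the final assertion $\Phi_\xi(\mathcal{L}(\eta_1)) = \mu$.

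The main obstacle is \emph{not} the algebra of the generator — that is bookkeeping once one has \cite[Corollary 3.2, Equations (3.6), (4.1)]{BehmeLindner13} — but rather the two analytic points: (a) justifying that it suffices to test the invariance equation against the exponential functions $e^{-ux}$, $u>0$, rather than $C_c^2$ functions, which requires controlling the $\nu_\xi$-integral and the $\nu_U$-integral near infinity (convergence there is exactly what ties into the condition that the integrals in \eqref{eq-range-cond-0} be finite, and one should note that $g_\mu(u)$ being well-defined in $\bR$ is implicitly part of the hypothesis in \eqref{eq-range-cond-1}); and (b) ensuring the equivalence "$\mu$ invariant for the GOU process $\iff$ the generator equation holds for all such $f$" is genuinely an equivalence and not merely one implication — for the "if" direction one needs that the stationarity equation $\widehat{\mu}$-version, once verified for all $u>0$, really does pin down that $\mu$ is the stationary law, which is where independence of $\eta$ and $\xi$ and the structure of the GOU process are used. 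I would handle (a) by a dominated-convergence / truncation argument approximating $e^{-u\cdot}$ by $C_c^2$ functions on $[0,\infty)$ and using the finiteness of $g_\mu(u)$, and (b) by appealing to the characterization of stationary GOU distributions in \cite[Theorem 2.1]{lindnermaller05} together with uniqueness of the stationary law (when the exponential functional converges).
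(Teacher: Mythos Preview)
Your strategy is essentially the paper's: reduce to the generator identity $\int \mathcal{A}f\,d\mu=0$ for the GOU process, verify it for $f(x)=e^{-ux}$ by direct computation (this yields exactly the formula for $g_\mu$), and then invoke invariance via \cite[Theorem~2.1(a)]{behmelindnermaller11}. However, your treatment of obstacle (a) has the approximation running the wrong way. For the ``if'' direction you know the identity only for exponentials and must deduce it for all $f\in C_c^2(\bR)$, because $C_c^2(\bR)$ is the core for the Feller process and that is what \cite[Theorem~3.37]{liggett} requires; approximating $e^{-u\cdot}$ by $C_c^2$ functions would let you go from $C_c^2$ to exponentials, which is the opposite (and easier) direction. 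The paper closes this gap by invoking a genuine density result: the algebra generated by $\{x\mapsto e^{-ux}:u\ge 0\}$ is dense in the space $\mathcal{S}^2(\bR)$ of rapidly decreasing $C^2$ functions (Nachbin--Llavona, \cite[Corollary~1.4.10]{Llavona}), so every $f\in C_c^2(\bR)$ is a limit of linear combinations of exponentials in a topology strong enough for dominated convergence in the generator identity (using the bounds from \cite[Equations~(3.5)--(3.6)]{BehmeLindner13}). Without this or an equivalent density statement, your step ``density of such exponentials in the relevant sense'' remains an assertion rather than an argument.
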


Using a Taylor expansion for $|y|\leq 1$, it is easy to see that the integral defining
$g_\mu$  converges for every distribution $\mu$ on $[0,\infty)$.

\begin{proof}
Observe first that
\begin{align}
- E\left[V e^{-uV} \right]&= \LL_V'(u)=  \psi_V'(u)e^{\psi_V(u)} \label{eq-derivative-1}\\
E\left[V^2 e^{-uV} \right]&= \LL_V''(u)=
\psi_V''(u)e^{\psi_V(u)}+(\psi_V'(u))^2 e^{\psi_V(u)} \label{eq-derivative-2}
\end{align}
for $u>0$. Hence
\begin{align}
g_\mu (u) & \LL_{V} (u)\nonumber\\
& = -u \gamma_\xi E \left[ V e^{-u V} \right] - \frac{\sigma_\xi^2}{2} \left( E \left[ V^2 e^{-uV}\right] u^2 - E\left[V e^{-uV}\right] u  \right)\label{eq-februar2014}\\
& \hskip 8mm - \int_{(-1,\infty)}\left( \LL_{V} (u e^{-y}) - \LL_{V} (u)
- u E \left[ V e^{-u V} \right] y \mathds{1}_{|y|\leq 1}
\right) \nu_\xi(dy), \;\; \forall\; u > 0. \nonumber
\end{align}

Now if $\mu = \law(V) \in R_\xi^+$, let $\law(\eta_1) \in D_\xi^+$ such that  $\mu=\law(V) = \Phi_\xi(\law(\eta_1))$. Then $g_\mu = \log \LL_\eta$
by Remark~4.5 of \cite{BehmeLindner13}, so that  \eqref{eq-range-cond-1} is satisfied.

Conversely, suppose that $V \geq 0$, and let $\eta$
be a subordinator such that
\eqref{eq-range-cond-1} is true. Define the L\'evy process $U$ by $e^{-\xi_t}  =\cE(U)_t$, where $U$ denotes the stochastic exponential of $U$. Then by \cite[Remark 4.5]{BehmeLindner13} and \eqref{eq-februar2014},
\eqref{eq-range-cond-1} is equivalent to
\begin{align*}
\log & \LL_\eta (u) \, \LL_{V} (u) \\
& = u \gamma_U E \left[ V e^{-u V} \right] - \frac{\sigma_U^2 u^2}{2} E \left[ V^2 e^{-uV}\right] \\
& \hskip 8mm - \int_{(-1,\infty)}\left( \LL_{V} (u (1+y)) - \LL_{V} (u)
+ u E \left[ V e^{-u V} \right] y \mathds{1}_{|y|\leq 1}
\right) \nu_U(dy), \;\; \forall\; u > 0,
\end{align*}
and a direct computation using \eqref{eq-derivative-1} and \eqref{eq-derivative-2}
 shows that this in turn is equivalent to
\begin{align}
0 & =  \int_{[0,\infty)} \left( f'(x) (x \gamma_U + \gamma_\eta^0) + \frac12 f''(x) x^2
\sigma_U^2\right) \, \mu(dx) \nonumber\\
&  \hskip 10mm + \int_{[0,\infty)}\mu(dx) \int_{(-1,\infty)} \left( f(x+xy) - f(x) - f'(x) xy
\mathds{1}_{|y|\leq 1}\right) \nu_U(dy)   \nonumber \\
& \hskip 10mm + \int_{[0,\infty)}\mu(dx) \int_{[0,\infty)} \left( f(x+y) - f(x) \right) \, \nu_\eta(dy)
  \label{eq-range-cond-2}
\end{align}
for all functions $f \in \mathcal{G} := \{ h \in C^2_b(\bR) : \exists \, u > 0 \;\mbox{such that}\; h(x) = e^{-ux}, \; \forall x\geq 0 \}$, where $\gamma_\eta^0$ denotes the drift of $\eta$.
Observe that \eqref{eq-range-cond-2} is also trivially true for $f\equiv 1$. Denote by
\begin{align*}
{\bf R} [\mathcal{G}]  :=  \big\{ h \in C^2_b(\bR) :  & \,\, \exists n\in \bN_0, \, \exists \lambda_1,
\ldots, \lambda_n \in \bR,
\, \exists u_1,\ldots, u_n \geq 0\\
& \mbox{such that}\,\,
 h(x) = \sum_{k=1}^n \lambda_k e^{-u_k x} \; \forall\; x \geq 0 \big\}
\end{align*}
the algebra generated by $\mathcal{G}$.
By linearity, \eqref{eq-range-cond-2} holds true
also for all $f\in {\bf R}[\mathcal{G}]$. Since $\mathcal{G}$
is strongly separating and since for each $x\in \bR$ there exists $h\in \mathcal{G}$
such that $g'(x) \neq 0$, the set $\mathcal{G}$ satisfies
condition (N) of \cite[Definition 1.4.1]{Llavona}, and hence ${\bf R}[\mathcal{G}]$ is dense
in $\mathcal{S}^2(\bR)$ by \cite[Corollary. 1.4.10]{Llavona}, where
$$
\mathcal{S}^2(\bR) := \{ h \in C^2(\bR): \lim_{|x|\to \infty} (1+|x|^2)^k
(|h(x)| + |h'(x)| + |h''(x)|) = 0, \,\, \forall\, k\in \bN_0\}
$$
is the space of rapidly decreasing functions of order 2, endowed with the usual topology
(cf. \cite[Definition 0.1.8]{Llavona}). In particular, for every $f\in C_c^2(\bR)\subset \mathcal{S}^2(\bR)$
there exists a sequence $(f_n)_{n\in \bN}$ in
${\bf R}[\mathcal{G}]$ such that
$$
\lim_{n\to\infty} \sup_{x\in \bR} \left[ (1+|x|^2) \left( |f_n(x) - f(x)| + |f_n'(x) - f'(x)|
+ |f_n''(x) - f''(x)| \right)\right] = 0.
$$
Since \eqref{eq-range-cond-2} holds for each $f_n$, an application of Lebesgue's dominated
convergence theorem shows
that \eqref{eq-range-cond-2} also holds for $f\in C_c^2(\bR)$; remark that
Lebesgue's theorem can be applied by Equations (3.5) and (3.6) in
\cite{BehmeLindner13} for the integral with respect to $\nu_U$ and $\mu$ and by observing that
$$
|f(x+y) - f(x)| \leq 2 \|f\|_\infty \mathds{1}_{y>1} + \|f'\|_\infty y \mathds{1}_{0<y\leq 1}
$$
for the integral with respect to $\nu_\eta$ and $\mu$.

Since $C_c^2(\bR)$ is a core
for the Feller process
\begin{equation} \label{eq-GOU-diff}
W_t^x = x + \int_0^t W_{s-}^x \, dU_s + \eta_t
\end{equation}
with generator
\begin{align*}
 A^W f(x)&=f'(x)(x\gamma_U+\gamma_\eta^0) + \frac{1}{2} f''(x)x^2\sigma_U^2 \\
& \hskip 10mm + \int_{(-1,\infty)} (f(x+xy) - f(x) - f'(x)xy \mathds{1}_{|y|\leq 1}) \nu_U(dy) \\
& \hskip 10mm + \int_{[0,\infty)} (f(x+y) - f(x) ) \nu_{\eta}(dy)
\end{align*}
for $f\in C_c^2(\bR)$ (cf. \cite[Theorem 3.1 and Corollary 3.2]{BehmeLindner13} and
\cite[Equation (8.6)]{sato}), we have that
$\int_{\bR} A^W f(x) \, \mu(dx) = 0$ for all $f$ from a core, and hence $\mu=\law(V)$
is an invariant measure for the GOU process \eqref{eq-GOU-diff}  by
\cite[Theorem 3.37]{liggett}.
By \cite[Theorem 2.1(a)]{behmelindnermaller11},
this implies that $\int_0^\infty e^{-\xi_{s-}} \, d\eta_s$ converges a.s. and that
$ \law(\int_0^\infty e^{-\xi_{s-}} \, d\eta_s) = \mu$, so that $\law(\eta_1) \in D_\xi^+$
and $\Phi_\xi (\law(\eta_1)) = \mu$, completing the proof.\qed
\end{proof}

\begin{remark} \label{rem-BehmeSchnurr}
{\rm To obtain a similar handy criteria for a non-positive distribution to be in the range $D_\xi$
seems harder.
A general \emph{necessary} condition in this vein for a distribution $\mu=\law(V)$ to be in the
range $R_\xi$, where $\xi$ is a L\'evy process with characteristic triplet
$(\gamma_\xi, \sigma_\xi^2, \nu_\xi)$, can be derived from Equation (4.7)
in \cite{BehmeLindner13}.
If further $E [V^2] < \infty$, and $\log \phi_\eta (u)$ denotes the characteristic
exponent of a L\'evy process $\eta$ such that $E [e^{iu\eta_1}] ={\phi}_\eta(u)$, $u\in \bR$, then by Equation (4.8) in \cite{BehmeLindner13},
\begin{align}
\phi_V(u) \log {\phi}_\eta (u)  & = \gamma_\xi u \phi_V'(u) - \frac{\sigma_\xi^2}{2}
\left( u^2 \phi_V''(u) + u \phi_V'(u)\right) \nonumber \\
&  \hskip 10mm -\int_{\bR} (\phi_V (u e^{-y}) - \phi_V(u) + uy\phi_V'(u)
\mathds{1}_{|y|\leq 1}) \, \nu_\xi(du). \label{eq-BehmeSchnurr}
\end{align}
In \cite[Example 3.2]{BehmeSchnurr13}, this equation has been derived using the theory of symbols.
Hence, a necessary condition for $V$ with $E [V^2] <\infty$ to be in $R_\xi$ is that there is a
L\'evy process $\eta$, such that the right-hand side of \eqref{eq-BehmeSchnurr} can be expressed
as $\phi_V(u) \log{\phi}_\eta(u) $, $u\in \bR$.
In Example 4.3 of \cite{BehmeSchnurr13} it has been shown that the existence of some
L\'evy process $\eta$ such that the right-hand side of \eqref{eq-BehmeSchnurr} can be expressed
as $\phi_V(u) \log {\phi}_\eta(u) $ is also \emph{sufficient} for $\mu = \law(V)$ with
$E [V^2] < \infty$ to be in $R_\xi$, hence this is a \emph{necessary and sufficient} condition
for $\law(V)$ with $E [V^2] < \infty$ to be in $R_\xi$, similar to
Theorem \ref{thm-range-condition}.
Without the assumption $E V^2 <\infty$, a necessary and sufficient condition is not
established at the moment.}
\end{remark}

We conclude this section with the following results:

\begin{lemma} \label{lemma-no-drift}
Let $\xi$ be a  spectrally negative L\'evy process of infinite variation, drifting to $+\infty$.
Then every element in $R_\xi^+$ is selfdecomposable and of finite variation with drift $0$.
\end{lemma}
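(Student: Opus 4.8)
The plan is to combine two known facts: that spectrally negative $\xi$ yields selfdecomposable exponential functionals, and that the range $R_\xi^+$ consists precisely of positive distributions. So let $\mu \in R_\xi^+$, say $\mu = \Phi_\xi(\law(\eta_1))$ with $\eta$ a subordinator (by Corollary \ref{cor-positive-subordinator}), and write $V = \int_0^\infty e^{-\xi_{s-}}\,d\eta_s$. Since $\xi$ is spectrally negative, it is well known (as recalled in Section \ref{S2}, cf. \cite{bertoinlindnermaller08}) that $\law(V)$ is selfdecomposable, hence infinitely divisible, and since $V \geq 0$ it is a selfdecomposable distribution supported on $[0,\infty)$. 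By \cite[Corollary 15.11]{sato} (or the general theory of selfdecomposable subordinators), such a distribution is automatically of finite variation, so it has a well-defined drift $b_V \geq 0$; it remains only to show $b_V = 0$.

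To pin down the drift I would use Theorem \ref{thm-range-condition}. Since $\mu \in R_\xi^+$ with Laplace exponent $\psi_V$, the function $g_\mu$ of \eqref{eq-range-cond-0} is the Laplace exponent of the subordinator $\eta$. The drift $b_V$ of $V$ can be read off as $b_V = -\lim_{u\to\infty} \psi_V(u)/u = \lim_{u\to\infty} \psi_V'(u)$ (up to sign conventions), and similarly the drift $\gamma_\eta^0$ of $\eta$ equals $\lim_{u\to\infty} g_\mu(u)/u$ up to sign. The idea is to analyze the asymptotics of $g_\mu(u)/u$ as $u\to\infty$ under the hypothesis that $\xi$ is of infinite variation and spectrally negative (so $\sigma_\xi^2 > 0$ or $\int_{(-1,0)} |y|\,\nu_\xi(dy) = \infty$, and $\nu_\xi((0,\infty)) = 0$). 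If $b_V > 0$, then $\psi_V(u) \sim -b_V u$ and $\psi_V'(u) \to -b_V$, $u\psi_V''(u) \to 0$; plugging into \eqref{eq-range-cond-0}, the term $-\frac{\sigma_\xi^2}{2} u^2 (\psi_V'(u))^2 \sim -\frac{\sigma_\xi^2}{2} b_V^2 u^2$ grows quadratically, which already contradicts $g_\mu(u)$ being a Laplace exponent (which must satisfy $g_\mu(u) = O(u)$) when $\sigma_\xi^2 > 0$. When $\sigma_\xi^2 = 0$ but $\int_{(-1,0)}|y|\,\nu_\xi(dy) = \infty$, I would show the integral term $\int_{(-1,0)} (e^{\psi_V(ue^{-y})-\psi_V(u)} - 1 + u\psi_V'(u)y)\,\nu_\xi(dy)$ fails to be $O(u)$: since $y < 0$ gives $ue^{-y} > u$, we get $\psi_V(ue^{-y}) - \psi_V(u) \sim -b_V u(e^{-y}-1)$, so $e^{\psi_V(ue^{-y})-\psi_V(u)} \to 0$ superlinearly is not the issue — rather the linear correction $-1 + u\psi_V'(u)y \sim -1 - b_V u y$ integrated against $\nu_\xi$ over $(-1,0)$ blows up like $u \int_{(-1,0)}|y|\,\nu_\xi(dy) = \infty$ unless the exponential term compensates, which it cannot since it is bounded by $1$. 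This forces $b_V = 0$.

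The main obstacle I anticipate is making the asymptotic analysis of $g_\mu(u)/u$ rigorous near $u = \infty$: one needs uniform control of $\psi_V(ue^{-y}) - \psi_V(u)$ and of $\psi_V'$ over the relevant range of $y$, handling the regions $|y|\leq 1$ and $|y| > 1$ separately, and one must be careful that $\psi_V$ is only known to be the Laplace exponent of \emph{some} subordinator, so a priori $\psi_V'$ need not converge — though it does, monotonically, because $-\psi_V$ is a Bernstein function so $\psi_V'$ is nonincreasing and bounded. An alternative, cleaner route avoiding delicate asymptotics: use that $\eta$ being a subordinator forces $g_\mu(u) \geq g_\mu$ evaluated via its Lévy–Khintchine representation to have $g_\mu(u)/u$ bounded, and directly extract from \eqref{eq-range-cond-0} that the drift of $\eta$ equals $(\gamma_\xi - \sigma_\xi^2/2) b_V$ plus a contribution from the Lévy measure — then argue that infinite variation of $\xi$ makes the only consistent choice $b_V = 0$. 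I would also cross-check against Example \ref{rem-Dufresne}, where $\xi$ is Brownian motion with drift (spectrally negative, infinite variation) and $V \stackrel{d}{=} 2/(\sigma^2 \Gamma_{2a/\sigma^2})$ indeed has drift $0$ and is selfdecomposable, consistent with the claim.

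\begin{remark}
{\rm The finite-variation claim is really a statement that selfdecomposable distributions on $[0,\infty)$ have no Gaussian part and are of finite variation; the genuinely new content is that the drift vanishes, which is where infinite variation of the spectrally negative $\xi$ enters through Theorem \ref{thm-range-condition}.}
\end{remark}
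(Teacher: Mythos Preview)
Your approach via Theorem~\ref{thm-range-condition} and Laplace-exponent asymptotics is quite different from the paper's, which is much shorter: the paper simply combines the selfdecomposability result from \cite{bertoinlindnermaller08} with Theorem~\ref{thm-support}. Since $\xi$ has infinite variation and $\eta$ is a subordinator, Theorem~\ref{thm-support} (together with Lemma~\ref{lem-support} in the case $\eta_t=at$) yields $\supp \law(V) = [0,\infty)$; on the other hand, for a positive infinitely divisible distribution with drift $b_V$ one has $\inf \supp = b_V$ by \cite[Theorem~24.10]{sato}, so $b_V = 0$ follows at once. No asymptotic analysis of $g_\mu$ is needed.

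Your route can in principle be pushed through, and the case $\sigma_\xi^2>0$ is essentially fine: the term $-\tfrac{\sigma_\xi^2}{2} u^2 (\psi_V'(u))^2 \sim -\tfrac{\sigma_\xi^2}{2} b_V^2 u^2$ dominates and forces $g_\mu(u)/u \to -\infty$, contradicting that $g_\mu$ is the Laplace exponent of a subordinator. However, in the case $\sigma_\xi^2 = 0$ there is a genuine gap in what you wrote. You argue that ``the linear correction $-1 + u\psi_V'(u) y$ integrated against $\nu_\xi$ over $(-1,0)$ blows up like $u \int_{(-1,0)}|y|\,\nu_\xi(dy) = \infty$, unless the exponential term compensates, which it cannot since it is bounded by $1$''. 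But this splitting is illegitimate: neither $e^{\psi_V(ue^{-y})-\psi_V(u)}$ nor $-1 + u\psi_V'(u)y$ is separately $\nu_\xi$-integrable near $y=0$ when $\int_{(-1,0)}|y|\,\nu_\xi(dy)=\infty$; the integral in \eqref{eq-range-cond-0} converges precisely because of the second-order cancellation between them. To make the argument rigorous you would have to split the range of $y$ at a $u$-dependent threshold (roughly $|y|\asymp 1/(b_V u)$), use the Taylor expansion $\approx \tfrac12 u^2(\psi_V'(u))^2 y^2$ for small $|y|$ and a cruder bound for larger $|y|$, and then verify that the total grows super-linearly in $u$. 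This is doable but substantially more work than the two-line support argument the paper uses; it also foreshadows the more detailed asymptotics carried out later in Theorem~\ref{thm-growth} for the Brownian case.
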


\begin{proof}
That any element in $R_\xi^+$ must be selfdecomposable has been shown in
\cite{bertoinlindnermaller08}, since $\xi$ is spectrally negative.
Since every element in $R_\xi^+$ is positive, it must be of finite variation,
and it follows from Theorem \ref{thm-support} and \cite[Theorem 24.10]{sato} that the drift must be 0.\qed
\end{proof}

\begin{remark} \label{rem-no-compound-Poisson}
{\rm It is well known that a selfdecomposable distribution cannot have finite non-zero
L\'evy measure, in particular it cannot be a compound Poisson distribution, which follows
for instance immediately from \cite[Corollary 15.11]{sato}.
This applies in particular to exponential functionals of L\'evy processes
with spectrally negative $\xi$.
However, even if $\xi$ is not spectrally negative, and $(\xi,\eta)^T$ is a bivariate
(possibly dependent) L\'evy process, then $\int_0^\infty e^{-\xi_{s-}} \, d\eta_s$ (provided it converges) still cannot
be a non-trivial compound Poisson distribution, with or without drift. For
if $c$ denotes the drift of a non-trivial compound Poisson distribution with drift,
then this distribution must have an atom at $c$. However, e.g. by
\cite[Theorem 2.2]{bertoinlindnermaller08}, $\law (\int_0^\infty e^{-\xi_{s-}} \, d\eta_s)$
must be continuous unless constant. In other words, if $\int_0^\infty e^{-\xi_{s-}} \, d\eta_s$ is infinitely divisible, non-constant and has no Gaussian part, then its L\'evy measure must be infinite.
In particular, it follows that if $\eta$ is a subordinator
and $\int_0^\infty e^{-\xi_{s-}} \, d\eta_s$ is infinitely divisible and non-constant, then its L\'evy measure must be infinite.}
\end{remark}

\section{Some results on $R_\xi^+$ when $\xi$ is a Brownian motion}\label{S5}

It is particularly interesting to study the distributions $\int_0^\infty e^{-\xi_{s-}} \, d\eta_s$ when one of the independent L\'evy processes $\xi$ or $\eta$ is a Brownian motion with drift. While the paper \cite{savovetal} focuses on  the case when $\eta$ is a Brownian motion with drift,
in this section we specialise to the case $\xi_t=\sigma B_t + at$, $t\geq 0$, 
with $\sigma,a>0$ and $(B_t)_{t\geq 0}$ a standard
Brownian motion. Then by Lemma~\ref{lemma-no-drift}, $R_\xi^+$ is a subset of $L(\bR_+)$,
the class of selfdecomposable distributions on $\bR_+$. Recall that a distribution
$\mu=\law(V)$ on $\bR_+$ is selfdecomposable if and only if it is infinitely divisible
with non-negative drift and its L\'evy measure has a L\'evy density of the form
$(0,\infty) \to [0,\infty)$, $x\mapsto x^{-1} k(x)$ with a non-increasing function
$k=k_V:(0,\infty) \to [0,\infty)$ (cf. \cite[Corollary 15.11]{sato}).
Further, to every distribution $\mu=\law(V) \in L(\bR_+)$ there exists a subordinator
$X=(X_t)_{t\geq 0}=(X_t(\mu))_{t\geq 0}$, unique in distribution, such that
\begin{equation} \label{eq-X-mu}
\mu = \cL \left( \int_0^\infty e^{-t} \, dX_t \right),
\end{equation}
(cf. \cite{jurekvervaat,wolfe}).
The Laplace exponents of $V$ and $X$ are related by
\begin{equation} \label{eq-laplaceonlydrift}
\psi_X(u)= u\psi_V'(u), \quad u> 0
\end{equation}
(e.g. \cite[Remark 4.3]{BarndorffNielsen-Shephard}; alternatively, \eqref{eq-laplaceonlydrift}
can be deduced from \eqref{eq-range-cond-0}).
Denoting the drifts of $V$ and $X$ by $b_V$ and $b_X$, respectively, it is easy to see that
\begin{equation} \label{eq-drift-relation}
b_V = b_X \int_0^\infty e^{-t} dt = b_X.
\end{equation}
Since the negative of the Laplace exponent of any infinitely divisible positive distribution is a Bernstein function and these are concave (cf. \cite[Definition 3.1 and Theorem 3.2]{rene-book}) it holds
$u\psi'(u)\geq \psi(u)$ for any such Laplace exponent. Together with the above we observe that
$\psi_X(u) \geq \psi_V(u)$ and hence $$\int_{(0,\infty)} (1-e^{-ut}) \, \nu_X(dt) \leq  \int_{(0,\infty)} (1-e^{-ut}) \, \nu_V(dt),\quad \forall\; u \geq 0.$$
Finally, the L\'evy density $x^{-1}k(x)$ of $V$ with $k$ non-increasing and the L\'evy measure
$\nu_X$ are related by
\begin{equation} \label{eq-k-X}
k(x) = \nu_X((x,\infty)), \quad x>0
\end{equation} (e.g. \cite[Equation (4.17)]{BarndorffNielsen-Shephard}).
In particular, the condition $k(0+) < \infty$ is equivalent to $\nu_X(\bR_+) < \infty$,
and the derivative of $-k$ is the L\'evy density of $\nu_X$.

\subsubsection*{Differential equation, necessary conditions, and nested ranges}

In the next result we give the differential equation for the Laplace transform of $V$,
which has to be satisfied if $\cL(V)$ is in the range $D_\xi^+$.
In the special case when $\eta$ is a compound Poisson process
with non-negative jumps, this differential equation \eqref{eq-diff-NilsenPaulsen} below has
already been obtained by Nilsen and Paulsen \cite[Proposition 2]{NilsenPaulsen1996}.
We then rewrite this differential equation in terms of $\psi_X$,
which turns out to be very useful for the further investigations.

\begin{theorem} \label{thm-range-description}
Let $\xi_t=\sigma B_t+a t$, $t\geq 0$, $\sigma, a >0$ for
some standard Brownian motion $(B_t)_{t\geq 0}$. Let $\mu =\law(V) \in L(\bR_+)$ have drift $b_V$
and L\'evy density given by $x^{-1} k(x)$, $x>0$, where $k:(0,\infty)\to [0,\infty)$
is non-increasing.
Then the following are true:
\begin{enumerate}
\item $\mu\in R_\xi^+$ if and only if there is some subordinator $\eta$ such that
\begin{equation} \label{eq-diff-NilsenPaulsen}
\frac12 \sigma^2 u^2 \LL_V''(u) + \left( \frac{\sigma^2}{2} - a \right) u \LL_V'(u)
+ \psi_\eta(u) \LL_V (u) = 0, \quad u > 0,
\end{equation}
in which case $\mu = \law(V) = \Phi_\xi(\law(\eta_1))$.
In particular, if $\eta$ is a subordinator,
then the Laplace transform
of $V$ satisfies \eqref{eq-diff-NilsenPaulsen} with $\LL_V(0) = 1$, and
if $V$ is not constant $0$, then $\lim_{u\to \infty} \LL_V(u) = 0$.
\item Let the subordinator $X=X(\mu)$ be related to $\mu$ by \eqref{eq-X-mu}.
Then $\mu\in R_\xi^+$ if and only if the function
$$
(0,\infty) \to \bR, \quad u \mapsto a \psi_X(u) - \frac{\sigma^2}{2} u \psi_X'(u)
- \frac{\sigma^2}{2} (\psi_X(u))^2
$$
defines the Laplace exponent $\psi_\eta(u)$ of some subordinator $\eta$.
In that case
\begin{equation} \label{eq-februar4}
\Phi_\xi (\law (\eta_1)) = \cL \left( \int_0^\infty e^{-t} \, dX_t \right) = \mu.
\end{equation}
\end{enumerate}
\end{theorem}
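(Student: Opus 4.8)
The plan is to derive both parts from Theorem~\ref{thm-range-condition} applied to the specific process $\xi_t = \sigma B_t + at$, whose characteristic triplet is $(\gamma_\xi,\sigma_\xi^2,\nu_\xi) = (a,\sigma^2,0)$. First I would substitute these values into the function $g_\mu$ from \eqref{eq-range-cond-0}: the L\'evy-measure integral vanishes, and what remains is
\begin{equation*}
g_\mu(u) = \left(a - \tfrac{\sigma^2}{2}\right) u\psi_V'(u) - \tfrac{\sigma^2}{2} u^2\left(\psi_V''(u) + (\psi_V'(u))^2\right), \quad u>0.
\end{equation*}
By Theorem~\ref{thm-range-condition}, $\mu \in R_\xi^+$ if and only if this $g_\mu$ is the Laplace exponent $\psi_\eta$ of a subordinator, and then $\Phi_\xi(\law(\eta_1)) = \mu$. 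To get part~(i) in the stated form, I would translate the identity $\psi_\eta = g_\mu$ back into a statement about $\LL_V = e^{\psi_V}$ using the relations \eqref{eq-derivative-1} and \eqref{eq-derivative-2}, namely $\LL_V'(u) = \psi_V'(u)\LL_V(u)$ and $\LL_V''(u) = (\psi_V''(u) + (\psi_V'(u))^2)\LL_V(u)$. Multiplying the identity $\psi_\eta(u) = g_\mu(u)$ through by $\LL_V(u)$ and rearranging yields exactly
\begin{equation*}
\tfrac12\sigma^2 u^2 \LL_V''(u) + \left(\tfrac{\sigma^2}{2} - a\right)u\LL_V'(u) + \psi_\eta(u)\LL_V(u) = 0,
\end{equation*}
which is \eqref{eq-diff-NilsenPaulsen}. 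The supplementary claims in part~(i) are then immediate: $\LL_V(0)=1$ holds for any probability measure, and $\lim_{u\to\infty}\LL_V(u) = \mu(\{0\})$, which is $0$ exactly when $V$ is not a.s.\ constant $0$ (since $\mu \in L(\bR_+)$ is either degenerate at $0$ or continuous, so has no atom at $0$; alternatively use Remark~\ref{rem-no-compound-Poisson}).

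For part~(ii), the key step is to rewrite $g_\mu$ in terms of $\psi_X$ via the relation \eqref{eq-laplaceonlydrift}, $\psi_X(u) = u\psi_V'(u)$. Differentiating gives $\psi_X'(u) = \psi_V'(u) + u\psi_V''(u)$, hence $u\psi_V''(u) = \psi_X'(u) - \psi_V'(u) = \psi_X'(u) - u^{-1}\psi_X(u)$, so that
\begin{equation*}
u^2\psi_V''(u) = u\psi_X'(u) - \psi_X(u), \qquad u^2(\psi_V'(u))^2 = (\psi_X(u))^2.
\end{equation*}
Substituting into the expression for $g_\mu$ and collecting terms, the coefficient of $\psi_X(u)$ works out to $\left(a - \tfrac{\sigma^2}{2}\right) + \tfrac{\sigma^2}{2} = a$, so
\begin{equation*}
g_\mu(u) = a\psi_X(u) - \tfrac{\sigma^2}{2}u\psi_X'(u) - \tfrac{\sigma^2}{2}(\psi_X(u))^2, \quad u>0,
\end{equation*}
which is precisely the function appearing in part~(ii). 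Thus $\mu \in R_\xi^+$ iff this function is the Laplace exponent of a subordinator $\eta$, and in that case $\Phi_\xi(\law(\eta_1)) = \mu = \cL(\int_0^\infty e^{-t}\,dX_t)$ by \eqref{eq-X-mu}, giving \eqref{eq-februar4}.

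The computations themselves are elementary; the only point requiring a little care is the legitimacy of \eqref{eq-laplaceonlydrift} and the differentiation leading to the substitution in part~(ii), i.e.\ that $\psi_V$ and $\psi_X$ are smooth enough on $(0,\infty)$ to perform these manipulations — but this is standard for Laplace exponents of infinitely divisible positive distributions (they extend to analytic functions on the right half-plane), and \eqref{eq-laplaceonlydrift} is already available from the excerpt. The main obstacle, if any, is bookkeeping: making sure the coefficients combine correctly so that the $\psi_X(u)$ term in $g_\mu$ has coefficient exactly $a$ rather than something involving $\sigma^2$, which is the content of the cancellation $\left(a-\tfrac{\sigma^2}{2}\right) + \tfrac{\sigma^2}{2} = a$. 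Since part~(i) is just the $\LL_V$-version of the criterion in Theorem~\ref{thm-range-condition} and part~(ii) is its $\psi_X$-version, no new probabilistic input beyond Theorem~\ref{thm-range-condition} and the classical relations \eqref{eq-laplaceonlydrift}--\eqref{eq-k-X} is needed.
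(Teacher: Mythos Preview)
Your proposal is correct and follows essentially the same route as the paper: specialize Theorem~\ref{thm-range-condition} to the triplet $(a,\sigma^2,0)$ to obtain \eqref{eq-laplacewithoutjumps}, rewrite it via \eqref{eq-derivative-1}--\eqref{eq-derivative-2} as the linear ODE \eqref{eq-diff-NilsenPaulsen} for part~(i), and then substitute $\psi_V'(u)=u^{-1}\psi_X(u)$, $\psi_V''(u)=u^{-1}\psi_X'(u)-u^{-2}\psi_X(u)$ from \eqref{eq-laplaceonlydrift} to obtain the $\psi_X$-form for part~(ii). The only cosmetic difference is that for $\lim_{u\to\infty}\LL_V(u)=0$ the paper invokes \cite[Theorem~2.2]{bertoinlindnermaller08} (no atom at $0$ for the exponential functional), whereas you use continuity of non-degenerate selfdecomposable laws; both are valid here.
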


\begin{proof}
(i) By Theorem \ref{thm-range-condition}, $\mu=\cL(V) \in R_\xi^+$ if and only if
\begin{equation} \label{eq-laplacewithoutjumps}
\psi_\eta (u) = \left( a- \frac{\sigma^2}{2} \right) u \psi_V'(u) - \frac{\sigma^2}{2}
u^2 \left( \psi_V''(u) + (\psi_V'(u))^2 \right), \quad u > 0,
\end{equation}
for some subordinator $\eta$, in which case $\mu=\Phi_\xi(\law(\eta_1))$. Using \eqref{eq-derivative-1} and \eqref{eq-derivative-2},
it is easy to see that this is equivalent to \eqref{eq-diff-NilsenPaulsen}.
That $\LL_V(0)=1$ is clear. If $V$ is not constant 0, then it cannot have an atom at 0
(e.g. \cite[Theorem 2.2]{bertoinlindnermaller08}), hence $\lim_{u\to\infty} \LL_V(u) = 0$. \\
(ii)
If $\law(V) = \cL \left( \int_0^\infty e^{-t}\, dX_t\right) \in L(\bR_+)$
for some subordinator $X$, then by \eqref{eq-laplaceonlydrift}
$\psi_V'(u)= u^{-1}\psi_X(u)$ and $\psi_V''(u)= u^{-1}\psi'_X(u)- u^{-2}\psi_X(u)$.
Inserting this into \eqref{eq-laplacewithoutjumps} yields the condition
\begin{equation} \label{eq-rel-xitoX}
\psi_\eta(u) = a \psi_X(u) - \frac{\sigma^2}{2} u \psi_X'(u) - \frac{\sigma^2}{2} (\psi_X(u))^2,
\quad u > 0,
\end{equation}
which gives the claim.\qed
\end{proof}

\begin{remark} \label{rem-Riccati}
{\rm (i) Since $u \psi_X'(u) \geq \psi_X(u)$ as observed after Equation \eqref{eq-drift-relation}, it follows from \eqref{eq-rel-xitoX} that
$$\psi_\eta(u) \leq \left( a - \frac{\sigma^2}{2} \right) \psi_X(u) - \frac{\sigma^2}{2} (\psi_X(u))^2, \quad u>0,$$
when the subordinators $X$ and $\eta$ are related by \eqref{eq-februar4}.\\
(ii) Equation \eqref{eq-rel-xitoX} is  a Riccati equation for $\psi_X$.
Using the transformation \linebreak $y(u) = \exp (\int_1^u \frac{\psi_X(v)}{v} \, dv) = C \, \LL_V(u)$ for $u>0$
by \eqref{eq-laplaceonlydrift},
it is easy to see that it reduces to the linear equation \eqref{eq-diff-NilsenPaulsen}.
Unfortunately, in general it does not seem possible to solve \eqref{eq-diff-NilsenPaulsen} in a closed form.\\
(iii) Since for any subordinator $\eta$, $\psi_\eta(u)$ has a continuous continuation to
$\{z \in \CC : \Re(z) \geq 0\}$ which is analytic in $\{ z \in \mathbb{C}: \Re(z) > 0\}$
(e.g. \cite[Proposition 3.6]{rene-book}),
for any fixed $u_0>0$ Equation \eqref{eq-diff-NilsenPaulsen} can be solved in principle on
$(0,2u_0)$ by the power series method (e.g. \cite[Section 2.8, Theorem 7, p. 190]{braun1993}).
In particular when $\nu_\eta$ is such that
$\int_{(1,\infty)} e^{ux} \nu_\eta(dx) <\infty$ for every $u>0$ (e.g. if $\nu_\eta$ has compact support),
then $\psi_\eta(z) = - b_\eta z + \int_{(0,\infty)} (e^{- zx} -1) \, \nu_\eta(dx)$, $z\in \mathbb{C}$,
is an analytic continuation of $\psi_\eta$ in the complex plane.
Hence it admits a power series expansion of the form
$\psi_\eta(z) = \sum_{n=0}^\infty f_n z^n$, $z\in \mathbb{C}$,
with $f_0=0$ and Equation \eqref{eq-diff-NilsenPaulsen} may be solved by the Frobenius method (e.g. \cite[Sect. 2.8, Theorem 8, p. 215]{braun1993}).
To exemplify this, assume for simplicity that $2a/\sigma^2$ is not an integer.
Equation \eqref{eq-diff-NilsenPaulsen} has a weak singularity at $0$. Its so-called indicial polynomial
is given by
$$
r\mapsto r (r-1) + \left( 1 - \frac{2a}{\sigma^2} \right) r = r \left( r - \frac{2a}{\sigma^2} \right).
$$
The exponents of singularity are the zeros of this polynomial, i.e. $0$ and $2a/\sigma^2$, and
since we have assumed that $2a/\sigma^2$ is not an integer, the general real solution of
\eqref{eq-diff-NilsenPaulsen} is given by
$$
\LL_V(u) = C_1 u^{2a/\sigma^2} \sum_{n=0}^\infty c_n u^n + C_2 \sum_{n=0}^\infty d_n u^n, \quad u > 0,
$$
where $C_1,C_2\in \bR$, $c_0=d_0=1$, the coefficients $c_n,d_n$ are defined recursively by
$$
c_n := \frac{-1}{n (n+2a/\sigma^2)} \sum_{k=0}^{n-1} c_k f_{n-k}, \quad d_n = \frac{-1}
{n (n-2a/\sigma^2)} \sum_{k=0}^{n-1} d_k f_{n-k}, \quad n\in \mathbb{N},
$$
(e.g. \cite[Section 2.8, Equation (14), p. 209]{braun1993})
and the power series $\sum_{n=0}^\infty c_n u^n$ and $\sum_{n=0}^\infty d_n u^n$ converge in
$u\in \bC$.
Since $\LL_V(0) = 1$, we even conclude that $C_2=1$.
}
\end{remark}

Next, we show that the ranges of $\Phi_\xi$, when $\xi_t = \sigma B_t+at$,
are nested when $\sigma$ and $a$ vary over all positive parameters.

\begin{theorem} \label{thm-inclusion}
Let $B=(B_t)_{t\geq 0}$ be a standard Brownian motion. For $a,\sigma>0$
let $\xi^{(a,\sigma)}:= (\xi^{(a,\sigma)}_t)_{t\geq 0}:= (\sigma B_t + at)_{t\geq 0}$.
Then $R^+_{\xi^{(a,\sigma)}} = R^+_{\xi^{( a/\sqrt{\sigma},1)}}$.\\
Further, for $a,\sigma, a',\sigma'>0$ such that $a/\sqrt{\sigma} \leq a'/\sqrt{\sigma'}$
we have $R_{\xi^{(a,\sigma)}}^+ \subset R_{\xi^{(a',\sigma')}}^+$.\\
In particular, for fixed $\sigma>0$, the family $R_{\xi^{(a,\sigma)}}^+$, $a>0$,
is nested and non-decreasing in $a$, and for fixed $a>0$ the family
$R_{\xi^{(a,\sigma)}}^+$, $\sigma>0$, is nested and non-increasing in $\sigma$.
\end{theorem}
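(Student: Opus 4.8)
The plan is to prove all assertions by reducing to the normalized family $\{\xi^{(a,1)}:a>0\}$ and then establishing monotonicity in the drift $a$ along this family. Two ingredients are involved: a Brownian-scaling argument that yields the identity $R^+_{\xi^{(a,\sigma)}}=R^+_{\xi^{(a/\sqrt\sigma,1)}}$, and a convexity (Bernstein-function) argument, based on the characterization in Theorem~\ref{thm-range-description}(ii), which shows that $R^+_{\xi^{(a,1)}}$ is non-decreasing in $a$. Granting both, the inclusion for $a/\sqrt\sigma\le a'/\sqrt{\sigma'}$ and the two ``in particular'' statements follow at once.

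First I would prove the scaling identity by a deterministic time change in the exponential functional. Fix a subordinator $\eta$ independent of $\xi^{(a,\sigma)}$ and, for $c>0$, substitute $s\mapsto cs$ in $\int_0^\infty e^{-\xi^{(a,\sigma)}_{s-}}\,d\eta_s$. The scaling property $B_{cs}\stackrel{d}{=}\sqrt c\,B_s$ shows that $(\xi^{(a,\sigma)}_{cs})_{s\ge0}$ is again a Brownian motion with drift of the same type, while $(\eta_{cs})_{s\ge0}$ is again a subordinator, now with Laplace exponent $c\,\psi_\eta$; moreover $(\xi^{(a,\sigma)}_{c\,\cdot})$ and $(\eta_{c\,\cdot})$ remain independent. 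Since $\eta\mapsto(\eta_{c\,\cdot})$ (equivalently $\psi_\eta\mapsto c\,\psi_\eta$) is a bijection of the class of subordinators onto itself, the \emph{set} of attainable laws is unchanged under this operation. Hence $R^+_{\xi^{(a,\sigma)}}$ depends on $(a,\sigma)$ only through the scale-invariant combination recorded in the statement, which gives $R^+_{\xi^{(a,\sigma)}}=R^+_{\xi^{(a/\sqrt\sigma,1)}}$. I expect this step to be routine once the bookkeeping of the substitution on both $\xi$ and $\eta$ is written out.

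The substantive step is monotonicity in the drift. Fix $\mu\in L(\bR_+)$ and let $X=X(\mu)$ be the associated subordinator with Laplace exponent $\psi_X$. By Theorem~\ref{thm-range-description}(ii), $\mu\in R^+_{\xi^{(a,1)}}$ exactly when $\psi_\eta^{(a)}(u):=a\psi_X(u)-\tfrac12 u\psi_X'(u)-\tfrac12(\psi_X(u))^2$ is the Laplace exponent of a subordinator, i.e. when $-\psi_\eta^{(a)}$ is a Bernstein function. The key observation is that the dependence on $a$ is affine: for $a\le a'$ one has $-\psi_\eta^{(a')}=-\psi_\eta^{(a)}+(a'-a)(-\psi_X)$. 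Since $-\psi_X$ is itself a Bernstein function and the Bernstein functions form a convex cone closed under addition, $-\psi_\eta^{(a)}$ Bernstein forces $-\psi_\eta^{(a')}$ Bernstein; moreover the value at $0+$ stays $0$, so no killing term is introduced. Thus $R^+_{\xi^{(a,1)}}\subseteq R^+_{\xi^{(a',1)}}$ whenever $a\le a'$. Combining this with the scaling identity gives $R^+_{\xi^{(a,\sigma)}}\subseteq R^+_{\xi^{(a',\sigma')}}$ as soon as $a/\sqrt\sigma\le a'/\sqrt{\sigma'}$, and the two ``in particular'' assertions follow because $a/\sqrt\sigma$ is increasing in $a$ for fixed $\sigma$ and decreasing in $\sigma$ for fixed $a$.

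The main obstacle, I expect, is not the affine decomposition itself but verifying that the Bernstein-function property is genuinely preserved: one must confirm that adding the Bernstein function $(a'-a)(-\psi_X)$ does not interfere with the complete monotonicity of the derivative already present for $-\psi_\eta^{(a)}$, and that $\psi_\eta^{(a')}$ remains a true (non-killed) subordinator exponent rather than merely a nonnegative function. The decomposition is designed precisely to isolate this difficulty in the clean term $(a'-a)(-\psi_X)$, so that only the base membership at $a$ --- not the membership at $a'$, nor the more delicate quadratic term $-\tfrac12(\psi_X)^2$ --- has to be assumed. This is why monotonicity, rather than membership itself, is the tractable assertion and the right vehicle for the nesting result.
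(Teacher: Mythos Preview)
Your approach matches the paper's proof essentially step for step: the paper also establishes the scaling identity by a Brownian time-change on the exponential functional (rewriting $\sigma B_t+at$ and substituting in the integral, with the time-changed $\eta$ again a subordinator), and then proves monotonicity in $a$ for $\sigma=1$ via exactly your affine decomposition, writing $a'\psi_X-\tfrac12 u\psi_X'-\tfrac12\psi_X^2=\psi_\eta+(a'-a)\psi_X$ and invoking the fact that sums of subordinator Laplace exponents are again subordinator Laplace exponents (the paper cites \cite[Corollary~3.8(i)]{rene-book}) together with Theorem~\ref{thm-range-description}(ii). The remaining assertions are deduced from these two facts in the same way you indicate.
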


\begin{proof}
Since $(\sigma B_t + at)_{t\geq 0}$ has the same distribution as
$(B_{t\sqrt{\sigma}} + at)_{t\geq 0}$, we obtain for a L\'evy
process $\eta=(\eta_t)_{t\geq 0}$ such that $\law(\eta_1) \in D_{\xi^{(a,\sigma)}}$
and $\eta$ is independent of $B$,
$$\int_0^\infty e^{-(\sigma B_t + at)} \, d\eta_t \stackrel{d}{=} \int_0^\infty e^{-(B_{t\sqrt{\sigma}}
+ at)} \, d\eta_t = \int_0^\infty e^{-(B_t + (a/\sqrt{\sigma})t)} \, d\eta_{t/\sqrt{\sigma}}.$$
Hence $\law(\eta_{1/\sqrt{\sigma}}) \in D_{\xi^{(a/\sqrt{\sigma},1)}}$
and $\Phi_{\xi^{(a,\sigma)}}(\law(\eta_1)) = \Phi_{\xi^{(a/\sqrt{\sigma},1)}}
(\law (\eta_{1/\sqrt{\sigma})}))$. In particular,
$R_{\xi^{(a,\sigma)}}^+$ $\subset R_{\xi^{( a/\sqrt{\sigma},1)}}^+$.
Similarly, $R_{\xi^{(a,\sigma)}}^+ \supset R_{\xi^{( a/\sqrt{\sigma},1)}}^+$ so that
$R_{\xi^{(a,\sigma)}}^+ = R_{\xi^{( a/\sqrt{\sigma},1)}}^+$.
For the second assertion,
it is hence sufficient to assume $\sigma=1$.
Now if $a<a'$ and $\mu \in R_{\xi^{(a,1)}}^+$, let the subordinator $X$ be related to
$\mu$ by \eqref{eq-X-mu}.
Then
$$
a \psi_X(u) - \frac{1}{2} u \psi_X'(u) -\frac{1}{2} (\psi_X(u))^2 = \psi_\eta(u) , \quad u > 0,
$$
by Theorem \ref{thm-range-description}~(ii), hence
$$
a' \psi_X(u) - \frac{1}{2} u \psi_X'(u) -\frac{1}{2} (\psi_X(u))^2
= \psi_\eta(u) + (a'-a) \psi_X(u), \quad u > 0,
$$
defines the Laplace exponent of a subordinator by \cite[Corollary 3.8 (i)]{rene-book}.
Hence $\mu \in R_{\xi^{(a',1)}}^+$ again by Theorem \ref{thm-range-description}~(ii).
The remaining assertions are clear.\qed
\end{proof}

\begin{remark}
{\rm Although $R_{\xi^{(1,\sigma)}}^+ \subset R_{\xi^{(1,\sigma')}}^+$ for $0< \sigma' < \sigma$,
and $\sigma B_t +t$ converges pointwise to $t$ when $\sigma\to 0$,
we do not have $\bigcup_{\sigma > 0} R_{\xi^{(1,\sigma)}}^+ = R_{\xi_t=t}^+ \left( = L(\bR_+)\right)$.
For example, a positive $3/4$-stable distribution is in $L(\bR_+)$ but not
in $\bigcup_{\sigma > 0} R_{\xi^{(1,\sigma)}}^+$, as follows from Example \ref{ex-posstable} or Corollary
\ref{cor-almostnopositivestable} below.
}
\end{remark}

While it is difficult to solve the equations \eqref{eq-diff-NilsenPaulsen} and
\eqref{eq-rel-xitoX} for given $\psi_\eta$, they still allow to obtain results about
the qualitative structure of the range.
The following gives a simple necessary condition in terms of the L\'evy density $x^{-1}k(x)$ for $\mu$
to be in $R_\xi^+$, and to calculate the drift $b_\eta$ of $(\Phi_\xi^+)^{-1} (\mu)$
when $\mu\in R_\xi^+$.

\begin{theorem} \label{thm-growth}
Let $\xi_t = \sigma  B_t + at$, $t\geq 0$, for $\sigma, a>0$ and some standard Brownian motion
$B=(B_t)_{t\geq 0}$. Let $\mu=\law(V)\in L(\bR_+)$ with drift $b_V$ and L\'evy density $x^{-1}k(x)$.
Let the subordinator $X$ be related to $\mu$ by \eqref{eq-X-mu} and denote its drift by $b_X$.
\begin{enumerate}
\item If $\mu\in R_\xi^+$, then $b_X = 0$ and $\lim_{u\to \infty} u^{-1/2}
|\psi_X(u)|=\lim_{u\to\infty} u^{1/2} |\psi_V'(u)|$ exists and is finite.
If $\mu = \Phi_\xi (\law (\eta_1))$ for some subordinator $\eta$ with drift $b_\eta$,
then $b_\eta$ and $\psi_X$ are related by
\begin{equation} \label{eq-eta-drift}
b_\eta = \frac{\sigma^2}{2} \lim_{u\to\infty} u^{-1} (\psi_X(u))^2 = \frac{\sigma^2}{2}
\lim_{u\to\infty} u (\psi_V'(u))^2.
\end{equation}
\item If $\mu\in R_\xi^+$ has L\'evy density $x^{-1}k(x)$, then  it holds $\limsup_{x\downarrow 0} x^{-1/2} \int_0^x k(s)
\, ds < \infty$ and $b_V=0$. In particular, if  $\mu = \Phi_\xi (\law (\eta_1))$ for some subordinator $\eta$ with drift
$b_\eta$, then $b_\eta > 0$ if and only if $\limsup_{x\downarrow 0} x^{-1/2} \int_0^x k(s) \, ds > 0$.
\end{enumerate}
\end{theorem}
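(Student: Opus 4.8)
The whole statement is an exercise in reading off asymptotics from the Riccati identity
\[
\psi_\eta(u) = a\,\psi_X(u) - \frac{\sigma^2}{2}\,u\,\psi_X'(u) - \frac{\sigma^2}{2}\,(\psi_X(u))^2,\qquad u>0,
\]
which, for $\mu\in R_\xi^+$, holds with a subordinator $\eta$ by Theorem~\ref{thm-range-description}(ii) (and this $\eta$ is the one with $\Phi_\xi(\law(\eta_1))=\mu$). The plan is to set up the following preliminaries first. Writing $g:=-\psi_X\ge 0$, one has $g(u)=b_X u+\int_{(0,\infty)}(1-e^{-ut})\,\nu_X(dt)$, a nonnegative nondecreasing Bernstein function with $g(0)=0$; from this representation $g(u)/u\to b_X$ and $u\,g'(u)\le g(u)$ (equivalently $se^{-s}\le 1-e^{-s}$, i.e.\ concavity of $-\psi_X$), and likewise $-\psi_\eta(u)/u\to b_\eta$ and $-\psi_\eta(u)=O(u)$, all by dominated convergence using that a subordinator integrates $\min(t,1)$ against its Lévy measure.

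For part (i), since $\psi_X'\le 0$ one obtains $-\psi_\eta\le a g+\frac{\sigma^2}{2}g^2$, while $u\,g'(u)\le g(u)$ gives $-\psi_\eta\ge(a-\frac{\sigma^2}{2})g+\frac{\sigma^2}{2}g^2$. Combining the lower bound with $-\psi_\eta(u)=O(u)$ and solving the resulting quadratic inequality for $g(u)$ (distinguishing the two signs of $a-\sigma^2/2$; when $a<\sigma^2/2$ one first notes that either $g$ is bounded or $g\to\infty$) yields $g(u)=O(u^{1/2})$, whence $b_X=\lim g(u)/u=0$. With $b_X=0$ one has $\psi_X'(u)=-\int_{(0,\infty)}te^{-ut}\,\nu_X(dt)\to 0$ by dominated convergence with dominant $te^{-t}$, which is $\nu_X$-integrable precisely because $\int_{(0,1]}t\,\nu_X(dt)<\infty$ for a subordinator. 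Then I would rewrite the Riccati identity as
\[
\frac{\sigma^2}{2}\,\frac{g(u)^2}{u}=\frac{-\psi_\eta(u)}{u}-\frac{a\,g(u)}{u}-\frac{\sigma^2}{2}\,\psi_X'(u),
\]
whose right-hand side converges to $b_\eta-0-0=b_\eta$; hence $\lim_{u\to\infty}g(u)^2/u=2b_\eta/\sigma^2$ exists and is finite, so $\lim_{u\to\infty}u^{-1/2}g(u)=\sqrt{2b_\eta/\sigma^2}$ exists and is finite, and \eqref{eq-eta-drift} follows after substituting $\psi_X(u)=u\psi_V'(u)$ from \eqref{eq-laplaceonlydrift}.

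For part (ii), $b_V=b_X=0$ by \eqref{eq-drift-relation} and part (i). Using $k(x)=\nu_X((x,\infty))$ from \eqref{eq-k-X} together with Tonelli's theorem, $\int_0^x k(s)\,ds=\int_{(0,\infty)}\min(x,t)\,\nu_X(dt)$. Applying the elementary inequalities $(1-e^{-1})\min(1,r)\le 1-e^{-r}\le\min(1,r)$ with $r=t/x$, and $b_X=0$ so that $g(1/x)=\int_{(0,\infty)}(1-e^{-t/x})\,\nu_X(dt)$, gives
\[
x\,g(1/x)\ \le\ \int_0^x k(s)\,ds\ \le\ \frac{x}{1-e^{-1}}\,g(1/x),\qquad x>0.
\]
Writing $u=1/x$, this reads $g(u)/\sqrt u\le x^{-1/2}\int_0^x k(s)\,ds\le (1-e^{-1})^{-1}g(u)/\sqrt u$, and since $g(u)/\sqrt u\to\sqrt{2b_\eta/\sigma^2}$ by (i), both the finiteness of $\limsup_{x\downarrow 0}x^{-1/2}\int_0^x k(s)\,ds$ and the equivalence ``$b_\eta>0$ iff $\limsup_{x\downarrow 0}x^{-1/2}\int_0^x k(s)\,ds>0$'' drop out immediately.

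The algebra here is light, but two points merit care. The real content of (i) is upgrading the crude bound $g(u)=O(u^{1/2})$ — which on its own controls only the $\limsup$ — to an honest limit; this is exactly why one must isolate the term $\psi_X'(u)$ and show it tends to $0$, and that is the one place where one genuinely uses that $\xi$ drives a \emph{subordinator} $X$, i.e.\ $\int_{(0,1]}t\,\nu_X(dt)<\infty$ rather than merely $\int\min(t^2,1)\,\nu_X(dt)<\infty$. Secondly, for the ``in particular'' part of (ii) to be sharp in both directions one needs the two-sided comparison between $\int_0^x k$ and $g(1/x)$ with explicit constants, so the elementary exponential inequalities above should be recorded carefully.
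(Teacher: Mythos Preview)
Your argument is correct and, from the point where $b_X=0$ is known, essentially identical to the paper's: divide the Riccati identity by $u$, use $\psi_X'(u)\to 0$ by dominated convergence, and read off \eqref{eq-eta-drift}. The one genuine difference is in how you obtain $b_X=0$. The paper does not extract this from the Riccati relation at all; instead it invokes Lemma~\ref{lemma-no-drift}, which in turn rests on the support characterization Theorem~\ref{thm-support}, to conclude $b_V=0$ directly, and then $b_X=0$ via \eqref{eq-drift-relation}. Your route---bounding $-\psi_\eta$ below by $(a-\tfrac{\sigma^2}{2})g+\tfrac{\sigma^2}{2}g^2$ using $ug'\le g$, combining with $-\psi_\eta(u)=O(u)$, and solving the quadratic to get $g(u)=O(u^{1/2})$---is more self-contained: it shows that the drift constraint is already forced by the Riccati structure, without appealing to the (rather lengthy) support analysis. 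For part~(ii) you reprove by hand the two-sided comparison between $\int_0^x k(s)\,ds$ and $x\,g(1/x)$ that the paper simply quotes as \cite[Lemma~3.4]{rene-book}; the constants differ slightly but the content is the same. One small wording slip: $X$ is the subordinator from the selfdecomposable representation \eqref{eq-X-mu} of $\mu$, not something ``driven by $\xi$''; the integrability $\int_{(0,1]}t\,\nu_X(dt)<\infty$ you need for $\psi_X'(u)\to 0$ is simply part of $X$ being a subordinator.
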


\begin{proof}
(i) Suppose that $\mu = \law(V) = \Phi_\xi(\cL(\eta_1))\in R_\xi^+$.
Then $b_V = 0$ by Lemma~\ref{lemma-no-drift} and hence $b_X=0$ by \eqref{eq-drift-relation}.
Since $\psi_X'(u) = - \int_{(0,\infty)} e^{-ux} x \, \nu_X(dx)$ we conclude that
$\lim_{u\to\infty} \psi_X'(u) = 0$ by dominated convergence.
Since $b_X=0$ and $\lim_{u\to\infty} u^{-1} \psi_X(u) = -b_X=0$ and
$\lim_{u\to\infty} u^{-1} \psi_\eta(u) = -b_\eta$ by \cite[Remark 3.3 (iv)]{rene-book},
\eqref{eq-eta-drift} as well as the necessity of the stated condition follow from
\eqref{eq-rel-xitoX}
and \eqref{eq-laplaceonlydrift}.\\
(ii) Since $k(x) = \nu_X((x,\infty))$ by \eqref{eq-k-X}, it follows from
\cite[Lemma 3.4]{rene-book} that
$$\frac{e-1}{e} \leq \frac{|\psi_X(u)|}{u \int_0^{1/u} k(s) \, ds } \leq 1, \quad u > 0.$$
Hence (ii) is an immediate consequence of (i) and Lemma \ref{lemma-no-drift}.\qed
\end{proof}

\begin{example}\label{ex-posstable}
{\rm Let $\xi_t =  \sigma B_t+at$ be as in Theorem \ref{thm-growth}. Let $\mu\in L(\bR_+)$ with L\'evy density $x^{-1}k(x)$.
Then $\int_0^1 k(x)\, dx < \infty$. \\ If $\liminf_{s\downarrow 0} k(s) s^{1/2} = +\infty$,
then $\liminf_{x\downarrow 0} x^{-1/2} \int_0^x k(s) \, ds = +\infty$.
Hence $\mu\not\in R_\xi^+$.
In particular, a non-degenerate positive $\alpha$-stable distribution with $\alpha>1/2$ cannot be
in $R_\xi^+$.  A more detailed result will be given in Corollary
\ref{cor-almostnopositivestable} below.}
\end{example}

\subsubsection*{Selfdecomposable distributions with $k(0+) < \infty$}

In this subsection we specialize to selfdecomposable distributions with $k(0+) < \infty$ and
give a characterization when they are in the range $R_\xi^+$ for $\xi$ a Brownian motion with drift.

\begin{theorem} \label{thm-range-finite}
Let $\xi_t = \sigma B_t + a t$, $t\geq 0$, $\sigma, a > 0$ for some standard Brownian
motion $(B_t)_{t\geq 0}$.
Let $\mu=\law(V)\in L(\bR_+)$ have drift $b_V$ and L\'evy density
$x^{-1} k(x)$, $x>0$, where $k=k_V:(0,\infty) \to [0,\infty)$ is non-increasing.
Let the subordinator $X=X(\mu)$ be related to $\mu$ by \eqref{eq-X-mu}.
Assume that $k(0+) < \infty$, equivalently that $\nu_X(\bR_+) < \infty$.
\begin{enumerate}
\item  Then $\mu\in R_\xi^+$ if and only if $b_X=0$ and $\nu_X$ has a density $g$ on $(0,\infty)$
such that
\begin{equation} \label{eq-cond-g1}
\lim_{t\to\infty} t g(t) = \lim_{t\to 0} t g(t) =0
\end{equation}
and such that the function
\begin{eqnarray} \label{eq-cond-g2}
G:(0,\infty) &\to & [0,\infty),\\
t&\mapsto& (a + \sigma^2 \nu_X(\bR_+)) \int_0^t g(v) \, dv
+ \frac{\sigma^2}{2} t g(t) - \frac{\sigma^2}{2} \int_0^t (g\ast g)(v) \, dv \nonumber
\end{eqnarray}
is non-decreasing. If these conditions are satisfied, then
$$
\Phi_\xi (\cL (\eta_1)) 
= \mu,
$$
where $\eta$ is the subordinator with drift $0$ and finite L\'evy measure $\nu_\eta(dx) = dG(x)$.
\item Equivalently, $\mu=\law(V)\in R_\xi^+$ if and only if $b_V=0$ and
$-k:(0,\infty) \to (-\infty,0]$ is absolutely continuous with derivative $g$ on
$(0,\infty)$ satisfying \eqref{eq-cond-g1} and such that $G$ defined by
\eqref{eq-cond-g2} is non-decreasing. In that case, $\Phi_\xi(\law(\eta_1))= \mu$, where $\eta$ is a subordinator with drift 0 and finite L\'evy measure $\nu_\eta(dx) = dG(x)$.
\end{enumerate}
\end{theorem}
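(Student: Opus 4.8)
The plan is to feed the description of $R_\xi^+$ from Theorem~\ref{thm-range-description}(ii) into the structure theory of selfdecomposable laws. By Theorem~\ref{thm-range-description}(ii), $\mu\in R_\xi^+$ if and only if $u\mapsto\psi_\eta(u):=a\,\psi_X(u)-\tfrac{\sigma^2}{2}u\,\psi_X'(u)-\tfrac{\sigma^2}{2}(\psi_X(u))^2$ is the Laplace exponent of a subordinator $\eta$, and then $\Phi_\xi(\cL(\eta_1))=\mu$. Under the standing hypothesis $k(0+)<\infty$, which by \eqref{eq-k-X} is the same as $\lambda:=\nu_X(\bR_+)<\infty$, I would write $\psi_X(u)=-b_Xu-\lambda+\widehat{\nu_X}(u)$ with $\widehat{\nu_X}(u):=\int_{(0,\infty)}e^{-ux}\,\nu_X(dx)$, and work with this representation. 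Part (ii) will follow from part (i) via the dictionary $k(x)=\nu_X((x,\infty))$ of \eqref{eq-k-X} (so ``$\nu_X$ has a density $g$'' reads as ``$-k$ is absolutely continuous with derivative $g$'') and $b_V=b_X$ from \eqref{eq-drift-relation}, so it suffices to treat (i).

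For the ``if'' direction, suppose $b_X=0$, $\nu_X(dx)=g(x)\,dx$ with $\lim_{t\to0}tg(t)=\lim_{t\to\infty}tg(t)=0$, and $G$ from \eqref{eq-cond-g2} non-decreasing. Using $\int_0^\infty g=\lambda$, $\int_0^\infty(g\ast g)=\lambda^2$ (Tonelli) and the boundary conditions on $tg(t)$ one gets $G(0+)=0$ and $G(\infty)=a\lambda+\tfrac{\sigma^2}{2}\lambda^2<\infty$, so $\nu_\eta(dx):=dG(x)$ is a finite L\'evy measure; let $\eta$ be the subordinator with drift $0$ and L\'evy measure $\nu_\eta$. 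Then $\psi_\eta(u)=-\int_{(0,\infty)}(1-e^{-ux})\,dG(x)=-G(\infty)+u\int_0^\infty e^{-ux}G(x)\,dx$, and I would compute the last integral termwise: the two cumulative integrals in \eqref{eq-cond-g2} give $\widehat g(u)=\widehat{\nu_X}(u)$ and $\widehat{g\ast g}(u)=(\widehat g(u))^2$ after an integration by parts with vanishing boundary terms (since $\int_0^\infty g,\int_0^\infty(g\ast g)<\infty$), while $\tfrac{\sigma^2}{2}tg(t)$ gives $-\tfrac{\sigma^2}{2}u\,\widehat g'(u)$ directly from $\widehat g'(u)=-\int_0^\infty xe^{-ux}g(x)\,dx$. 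Plugging $\psi_X=-\lambda+\widehat g$ and $\psi_X'=\widehat g'$ in then shows $\psi_\eta=a\psi_X-\tfrac{\sigma^2}{2}u\psi_X'-\tfrac{\sigma^2}{2}(\psi_X)^2$, and Theorem~\ref{thm-range-description}(ii) finishes this direction.

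For the ``only if'' direction, let $\mu\in R_\xi^+$ and let $\eta$ be the subordinator from Theorem~\ref{thm-range-description}(ii). That $b_X=0$ is Theorem~\ref{thm-growth}(i) (otherwise $-\tfrac{\sigma^2}{2}(\psi_X(u))^2$ grows quadratically, impossible for a Laplace exponent). With $b_X=0$, finiteness of $\nu_X$ and $uxe^{-ux}\le e^{-1}$ give, by dominated convergence, $\widehat{\nu_X}(u)\to0$ and $u\,\widehat{\nu_X}'(u)=-u\int_{(0,\infty)}xe^{-ux}\,\nu_X(dx)\to0$ as $u\to\infty$, hence $\psi_\eta(u)\to-(a\lambda+\tfrac{\sigma^2}{2}\lambda^2)$; so $\eta$ has drift $0$ and finite L\'evy measure with $\nu_\eta(\bR_+)=a\lambda+\tfrac{\sigma^2}{2}\lambda^2$, and taking Laplace transforms,
$$\widehat{\nu_\eta}(u)=(a+\sigma^2\lambda)\,\widehat{\nu_X}(u)-\tfrac{\sigma^2}{2}\,u\,\widehat{\nu_X}'(u)-\tfrac{\sigma^2}{2}\,\widehat{\nu_X\ast\nu_X}(u),\qquad u>0.$$
The crux is to extract absolute continuity of $\nu_X$ from this. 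I would rewrite it as $u\,\widehat{\nu_X}'(u)=\widehat\theta(u)$, where $\theta:=\tfrac{2}{\sigma^2}\big((a+\sigma^2\lambda)\nu_X-\nu_\eta\big)-\nu_X\ast\nu_X$ is a finite signed measure on $(0,\infty)$ with $\theta(\bR_+)=0$; since also $\widehat{\nu_X}'(u)=-\widehat\rho(u)$ for the locally finite measure $\rho(dx):=x\,\nu_X(dx)$, two integrations by parts (allowed since $\rho((0,x])\le\lambda x$ and $x\mapsto\theta((x,\infty))$ is bounded) give $\widehat\rho(u)=u\int_0^\infty e^{-ux}R(x)\,dx$ with $R(x):=\rho((0,x])$ and $-u^{-2}\widehat\theta(u)=\int_0^\infty e^{-ux}\big(\int_0^x\theta((t,\infty))\,dt\big)\,dx$. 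Equating and using uniqueness of Laplace transforms forces $R(x)=\int_0^x\theta((t,\infty))\,dt$, so $R$, and hence $N(x):=\nu_X((0,x])=\int_{(0,x]}t^{-1}\,dR(t)$, is absolutely continuous with density $g(t):=t^{-1}\theta((t,\infty))$; moreover $tg(t)=\theta((t,\infty))\to0$ as $t\to0$ (since $\theta((0,t])\to0$ and $\theta(\bR_+)=0$) and as $t\to\infty$, which is \eqref{eq-cond-g1}. Feeding $\nu_X(dx)=g(x)\,dx$ into the displayed identity for $\widehat{\nu_\eta}$ and running the ``if''-part computation backwards then gives $\widehat{\nu_\eta}(u)=\int_{(0,\infty)}e^{-ux}\,dG(x)$ with $G$ from \eqref{eq-cond-g2} (well-defined, as $tg(t)$ is of bounded variation), so $dG=\nu_\eta$ and $G$ is non-decreasing.

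The main obstacle is exactly this last extraction of absolute continuity of $\nu_X$ from the functional identity --- in particular the handling of $u\,\widehat{\nu_X}'(u)$, which on its own is not the Laplace transform of a measure; the two integrations by parts together with the uniqueness theorem for Laplace transforms are what make it go through, and the boundary conditions \eqref{eq-cond-g1} are precisely what makes the integration-by-parts manipulations legitimate in both directions. The rest --- the limit evaluations, the explicit values of $\nu_\eta(\bR_+)$ and $G(\infty)$, and the passage from (i) to (ii) --- is routine bookkeeping with Laplace transforms.
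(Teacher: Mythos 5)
Your proposal is correct and follows essentially the same route as the paper: both directions reduce to the identity of Theorem~\ref{thm-range-description}(ii), use Theorem~\ref{thm-growth}(i) for $b_X=0$, and extract the density $g(t)=t^{-1}\theta((t,\infty))$ (the paper's $\frac{2}{\sigma^2 t}(\rho_2((t,\infty))-\rho_1((t,\infty)))$) via Fubini/integration-by-parts identities for Laplace transforms and the uniqueness theorem, with the converse obtained by reversing the computation for $\nu_\eta=dG$. The only (harmless) organizational difference is that you establish $b_\eta=0$ and $\nu_\eta(\bR_+)=a\lambda+\tfrac{\sigma^2}{2}\lambda^2<\infty$ up front from the $u\to\infty$ asymptotics of $\psi_\eta$, whereas the paper deduces finiteness of $\nu_\eta$ and $b_\eta=0$ inside the uniqueness step from positivity of the candidate density.
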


\begin{proof}
(i) Assume that $\nu_X(\bR_+) < \infty$. Suppose first that $\mu\in R_\xi^+$, and let
$(\eta_t)_{t\geq 0}$ be a subordinator such that
$\Phi_\xi(\cL(\eta_1)) = \mu$.
Then $b_X=0$ by Theorem~\ref{thm-growth}~(i),
and by Theorem \ref{thm-range-description}~(ii), we have \eqref{eq-rel-xitoX}
with
\begin{eqnarray*}
\psi_\eta(u) &=& - b_\eta u - \int_{(0,\infty)} (1-e^{-ut}) \, \nu_\eta(dt) \\ 
\mbox{and} \quad
\psi_X(u) &=& - \int_{(0,\infty)} (1-e^{-ut}) \, \nu_X(dt), \quad u \geq 0.
\end{eqnarray*}
Since $\mathbb{L}_{\nu_X}(u)^2 = \mathbb{L}_{\nu_X \ast \nu_X} (u)$ and
$(\nu_X \ast \nu_X)(\bR_+) = \nu_X(\bR_+)^2$, where $\mathbb{L}_{\nu_X}$ denotes the Laplace transform of the finite measure $\nu_X$, we conclude
\begin{align*}
\psi_X(u)^2 & =  \left( \int_{(0,\infty)} (1-e^{-ut}) \, \nu_X(dt) \right)^2 \\
& =\nu_X(\bR_+)^2 - 2\nu_X(\bR_+) \int_{(0,\infty)} e^{-ut} \, \nu_X(dt) + \int_{(0,\infty)} e^{-ut}
(\nu_X \ast \nu_X) (dt) \\
& = \int_{(0,\infty)} (1-e^{-ut}) \, (2\nu_X(\bR_+) \nu_X - \nu_X \ast \nu_X)(dt).
\end{align*}
Hence, from \eqref{eq-rel-xitoX}, on the one hand
\begin{equation} \label{eq-rel-X-rho}
\frac{\sigma^2}{2} u \psi_X'(u)  = b_\eta u + \int_{(0,\infty)} (1-e^{-ut}) \,
\rho_1(dt) - \int_{(0,\infty)} (1-e^{-ut}) \, \rho_2(dt),\end{equation}
where
\begin{equation*}
\rho_1 := \nu_\eta + \frac{\sigma^2}{2} \nu_X \ast \nu_X \quad \mbox{and} \quad \rho_2
:= (a+\sigma^2 \nu_X(\bR_+)) \nu_X.
\end{equation*}
On the other hand, $u \psi_X'(u) = - u \int_{(0,\infty)} e^{-ut} t \, \nu_X(dt)$,
and rewriting the integral $\int_{(0,\infty)} (1-e^{-ut})\rho_i(dt) = \int_0^\infty u e^{-ut} \rho_i((t,\infty)) \, dt$
by Fubini's theorem as in \cite[Remark 3.3(ii)]{rene-book},
\eqref{eq-rel-X-rho} gives
$$
\frac{\sigma^2}{2} u \int_{(0,\infty)} e^{-ut} t \nu_X(dt) = -b_\eta u + u \int_0^\infty e^{- u t}
\left( \rho_2((t,\infty)) - \rho_1((t,\infty)) \right) \, dt, \quad u > 0.
$$
Dividing by $u$, the uniqueness theorem for Laplace transforms then shows $b_\eta=0$ and that $\nu_X$ has a density $g$, given by
\begin{equation} \label{eq-februar5}
g(t) = \frac{2}{\sigma^2 t} \left( \rho_2((t,\infty)) - \rho_1((t,\infty))\right), \quad t > 0.
\end{equation}
From this we conclude that $\lim_{t\to\infty} t g(t) = 0$ and that the limit $\lim_{t\to 0} t g(t) =
\frac{2}{\sigma^2} (\rho_2(\bR_+) - \rho_1(\bR_+))$ exists in $[-\infty,\infty)$ since
$\rho_2(\bR_+) < \infty$.
But since $g\geq 0$, the limit must be in $[0,\infty)$,
hence $\rho_1(\bR_+) < \infty$ so that $\nu_\eta(\bR_+) < \infty$, and since
$\int_0^1 \frac{t g(t)}{t} \, dt = \int_0^1 g(t) \, dt < \infty$, we also have
$\lim_{t\to 0} t g(t) = 0$. Further, by \eqref{eq-februar5}, the total variation of $t\mapsto t g(t)$ over $(0,\infty)$ is finite. Knowing now that $\nu_X$ has a density $g$ with $\lim_{t \to \infty}
t g(t) = \lim_{t\to 0} t g(t) = 0$, we can write using partial integration
\begin{align*}
u \psi_X'(u) & =  \int_0^\infty \left( \frac{d}{dt} e^{-ut} \right) t g(t) \, dt
=   \int_0^\infty  t g(t) d\left(e^{-ut}\right ) \\
&=  \left. t g(t) e^{-ut} \right|_{t=0}^{t=\infty} - \int_0^\infty e^{-ut} \, d(tg(t))
=  \int_0^\infty (1-e^{-ut}) \, d(tg(t)).
\end{align*}
Inserting this in \eqref{eq-rel-X-rho}, we obtain by uniqueness of the representation of
Bernstein functions (cf. \cite[Theorem 3.2]{rene-book}) that
$$
\frac{\sigma^2}{2} d(t g(t)) = \nu_\eta(dt) + \frac{\sigma^2}{2} (g\ast g) (t)\, dt
- (a + \sigma^2 \nu_X(\bR_+)) g(t) \, dt,
$$
or equivalently
\begin{equation} \label{eq-zwischen1}
\nu_\eta(dt) = (a + \sigma^2 \nu_X(\bR_+)) g(t) \, dt + \frac{\sigma^2}{2} d (t g(t)) -
\frac{\sigma^2}{2} (g\ast g) (t)\, dt.
\end{equation}
Since $\nu_\eta$ is a positive (and finite) measure, so is the right-hand side of
\eqref{eq-zwischen1}, and hence $G$ is non-decreasing with $\nu_\eta(dt) = d G(t)$,
finishing the proof of the \lq\lq only if\rq\rq-assertion.
The converse follows by reversing
the calculations above, by defining a subordinator $\eta$ with drift 0 and
L\'evy measure $\nu_\eta (dt) := dG(t)$, observing that $t\mapsto t g(t)$ is of finite total variation on $(0,\infty)$ by \eqref{eq-cond-g1} and \eqref{eq-cond-g2}, and then showing that $\nu_\eta$ satisfies
\eqref{eq-rel-X-rho} and hence that $\psi_\eta$ satisfies \eqref{eq-rel-xitoX}.\\
(ii) This follows immediately from (i), \eqref{eq-drift-relation} and \eqref{eq-k-X}.\qed
\end{proof}

\begin{remark} \label{rem-range-description}
{\rm Let $\xi_t = \sigma B_t + a t$, $t\geq 0$, with $\sigma, a>0$ and $(B_t)_{t\geq 0}$ a
standard Brownian motion.\\
(i) If $\mu \in R_\xi^+$ and $X$ is a subordinator such that \eqref{eq-X-mu} holds and such
that $\nu_X(\bR_+) < \infty$, then the L\'evy density $g$ of $\nu_X$ cannot have negative jumps,
since by \eqref{eq-cond-g2} this would contradict non-decreasingness of $G$.\\
(ii) Let $X$ be a subordinator with $\nu_X(\bR_+) < \infty$ and $b_X=0$, and suppose that $\nu_X$
has a density $g$ such that there is $r\geq 0$
with $g(t) = 0$ for $t\in (0,r]$ and $g$ is differentiable on $(r,\infty)$ (the case $r=0$
is allowed). Then $\cL(\int_0^\infty e^{-t} \, dX_t) \in R_\xi^+$ if and only if
$g$ satisfies \eqref{eq-cond-g1} and
\begin{equation} \label{eq-cond-g3}
\left( a+ \sigma^2 \nu_X(\bR_+) + \frac{\sigma^2}{2}\right) g(t) + \frac{\sigma^2}{2} t g'(t)
- \frac{\sigma^2}{2} (g\ast g) (t) \geq 0, \quad \forall\; t > r.
\end{equation}
This follows immediately from Theorem \ref{thm-range-description}~(iii) since the right-hand side
of \eqref{eq-cond-g3} is the derivative of the function $G$ defined by \eqref{eq-cond-g2}.}
\end{remark}

The following gives an example for a distribution in $R_\xi^+$ such that $\nu_X(\bR_+) < \infty$.
\begin{example}
{\rm Let $r\geq 0$ and let $g:[0,\infty) \to [0,\infty)$ be a function such that $g(t)=0$ for all
$t\in (0,r)$ (a void assumption if $r=0$), $g_{|[r,\infty)}$ is continuously differentiable with
derivative $g'$, such that $g$ is strictly positive on $[r,\infty)$, $\lim_{t\to\infty} g(t) = 0$
and such that $-g'$ is regularly varying at $\infty$ with index $\beta < -2$ (in particular,
$g'(t) < 0$ for large enough $t$).
Then $g$ defines a L\'evy density of a subordinator $X$ with drift 0 such that
$\nu_X(\bR_+) < \infty$ and
$\cL ( \int_0^\infty e^{-t} \, dX_t) \in R_{\sigma B_t + at}^+$ for large enough $a$.}
\end{example}

\begin{proof}
Since $-g'$ is regularly varying with index $\beta$ and $\lim_{t\to\infty} g(t) = 0$,
$g$ is regularly varying at $\infty$ with index $\beta + 1 < -1$ and $\lim_{t\to\infty}
\frac{-t g'(t)}{g(t)} = -\beta-1$ by Karamata's Theorem
(e.g. \cite[Theorem 1.5.11]{BinghamGoldieTeugels}).
In particular, $\lim_{t\to\infty} t g(t) = 0$,
further $\lim_{t\to 0} t g(t) = 0$ since $g(0) < \infty$, and $g$ is a density of a finite measure.
Next, observe that
$$
\frac{(g\ast g)(t)}{g(t)} = \int_r^{t/2} \frac{g(t-x)}{g(t)} g(x) \, dx
+ \int_{t/2}^{t-r} \frac{g(x)}{g(t)} g(t-x) \, dx, \quad t\geq 2r.
$$
But for any $\varepsilon>0$, when $t\geq t_\varepsilon$ is large enough,
we have $g(t-x)/g(t) \leq 2^{-\beta-1}+\varepsilon$ for $x\in (r,t/2]$, and
$g(x)/g(t)\leq 2^{-\beta-1} + \varepsilon$ for $x\in [t/2,t-r]$ by the uniform convergence theorem
for regularly varying functions (e.g. \cite[Theorem 1.5.2]{BinghamGoldieTeugels}).
As $\int_0^\infty g(t) \, dt <\infty$, this shows that $\limsup_{t\to\infty}
\frac{(g\ast g) (t)}{g(t)} < \infty$.
Since also $g\ast g$ as well as $|g'|$
are bounded on $[r,\infty)$, it follows that \eqref{eq-cond-g3} is satisfied for all $t\geq r$
for large enough $a$, and for $t\in (0,r)$ it is trivially satisfied.
Hence $\law(\int_0^\infty e^{-t} \, dX_t) \in R_{\sigma B_t+at}^+$ for large enough $a$.\qed
\end{proof}

Next we give some examples of selfdecomposable distributions which are not in $R_\xi^+$.

\begin{example}
{\rm  Let $\xi_t = \sigma B_t + at$, $t\geq 0$, with a standard Brownian motion
$B$ and parameters $\sigma,a > 0$.\\
(i) A selfdecomposable distribution with L\'evy density $c \mathds{1}_{(0,1)}(x) x^{-1}$
and $c>0$ is not in $R_\xi^+$ by Theorem
\ref{thm-range-finite}, since
$k(x) = \mathds{1}_{(0,1)}(x)$ satisfies $k(0+) < \infty$ but is not continuous.\\
(ii) If $X$ is a subordinator with non-trivial L\'evy measure $\nu_X$ such that
$\nu_X$ has compact support, then $\cL(\int_0^\infty e^{-t} \, dX_t)$
is not in $R_\xi^+$ by Theorem ~\ref{thm-range-finite}, since if it were then $\nu_X$ had a density $g$, and if $x_g$ denotes the right end
point of the support of $g$, then
$2x_g$ is the right endpoint of the support of $g\ast g$, showing that the function $G$
defined by \eqref{eq-cond-g2} cannot be non-decreasing on $(0,\infty)$.\\
(iii) If $X$ is a subordinator with finite L\'evy measure and non-trivial L\'evy density $g$ which is a step function
(with finitely or infinitely many steps), then $\law(\int_0^\infty e^{-t} \, dX_t)$
is not in $R_\xi^+$ by Remark \ref{rem-range-description}~(i), since $g$ must have at least one
negative jump as a consequence of $\int_0^\infty g(t) \, dt < \infty$.
}
\end{example}

\subsubsection*{Positive stable distributions}

In this subsection we characterize when a positive stable distribution is in the range
$R_\xi^+$. We also consider (finite) convolutions of positive stable distributions, i.e. distributions of the form $\law(\sum_{k=1}^n X_i)$, where $n\in \bN$
and $X_1,\ldots, X_n$ are independent positive stable distributions.

\begin{theorem} \label{thm-almostnopositivestable}
Set $\xi_t=\sigma B_t+a t$, $t\geq 0$, $a, \sigma >0$ for some standard
Brownian motion $(B_t)_{t\geq 0}$. Let $0<\alpha_1<\ldots<\alpha_n<1$ for some $n\in\NN$ and
$b_i\geq 0$, $i=1,\ldots,n$ and let $\mu$ be the distribution of $\sum_{i=1}^n X_i$ where the
$X_i$ are independent and each $X_i$ is non-trivial and positive $\alpha_i$-stable with drift $b_i$.
Then if $\mu$ is in $R_\xi^+$ it holds $b_i=0$, $i=0,\ldots, n$, $\alpha_1 \leq
(\frac{2a}{\sigma^2}\wedge \frac{1}{2})$ and $\alpha_n\leq \frac{1}{2}$.
Conversely, if $b_i=0$, $i=0,\ldots, n$ and $\alpha_n \leq (\frac{2a}{\sigma^2}\wedge \frac{1}{2})$,
then $\mu$ is in $R_\xi^+$.
\end{theorem}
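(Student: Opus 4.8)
To decide whether $\mu\in R_\xi^+$ I will use Theorem~\ref{thm-range-description}~(ii), so the first task is to make the subordinator $X=X(\mu)$ of \eqref{eq-X-mu} explicit. Since every positive stable law is selfdecomposable and a convolution of selfdecomposable laws is selfdecomposable, $\mu\in L(\bR_+)$, so $X(\mu)$ is well defined and Theorem~\ref{thm-range-description}~(ii) applies. Writing $X_i=b_i+S_i$ with $S_i$ driftless positive $\alpha_i$-stable, non-triviality of $X_i$ gives $\psi_V(u)=-\big(\sum_{i=1}^n b_i\big)u-\sum_{i=1}^n c_i u^{\alpha_i}$ for suitable constants $c_i>0$, whence $b_V=\sum_{i=1}^n b_i$ and, by \eqref{eq-laplaceonlydrift}, $\psi_X(u)=u\psi_V'(u)=-\big(\sum_i b_i\big)u-\sum_i d_i u^{\alpha_i}$ with $d_i:=c_i\alpha_i>0$. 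In both directions of the proof we will have $b_i=0$ for all $i$: in the \lq\lq only if\rq\rq\ direction this is forced by Lemma~\ref{lemma-no-drift}, which gives $b_V=0$ and hence $\sum_i b_i=0$; in the \lq\lq if\rq\rq\ direction it is part of the hypothesis. So $\psi_X(u)=-\sum_i d_i u^{\alpha_i}$, and a direct computation then yields, for $u>0$,
\[
f(u):=-\Big(a\psi_X(u)-\tfrac{\sigma^2}{2}u\psi_X'(u)-\tfrac{\sigma^2}{2}\big(\psi_X(u)\big)^2\Big)
=\sum_{i=1}^n d_i\Big(a-\tfrac{\sigma^2\alpha_i}{2}\Big)u^{\alpha_i}+\frac{\sigma^2}{2}\sum_{i,j=1}^n d_i d_j\,u^{\alpha_i+\alpha_j}.
\]
Every exponent occurring is strictly positive, so $f(0+)=0$, and by Theorem~\ref{thm-range-description}~(ii), $\mu\in R_\xi^+$ if and only if $f$ is a Bernstein function, in which case $\psi_\eta:=-f$ is the Laplace exponent of a subordinator $\eta$ with $\Phi_\xi(\cL(\eta_1))=\mu$.

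For the necessity part, suppose $\mu\in R_\xi^+$; then all $b_i=0$ as above, and $f$ is a Bernstein function, so in particular $f(u)\ge 0$ for all $u>0$. The key observation is that $\alpha_1$ is \emph{strictly} the smallest exponent in $f$: since $\alpha_i+\alpha_j\ge 2\alpha_1>\alpha_1$ for all $i,j$, the power $u^{\alpha_1}$ is produced only by the first sum, with coefficient $d_1\big(a-\tfrac{\sigma^2\alpha_1}{2}\big)$; dividing $f(u)$ by $u^{\alpha_1}$ and letting $u\downarrow 0$ gives $\lim_{u\downarrow 0}u^{-\alpha_1}f(u)=d_1\big(a-\tfrac{\sigma^2\alpha_1}{2}\big)\ge 0$, and since $d_1>0$ this forces $\alpha_1\le\tfrac{2a}{\sigma^2}$. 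For $\alpha_n\le\tfrac12$ I will invoke Example~\ref{ex-posstable}: with all $b_i=0$, the L\'evy density $x^{-1}k(x)$ of $\mu$ dominates that of its $\alpha_n$-stable summand, so $k(s)\ge c\,s^{-\alpha_n}$ for small $s>0$ and some $c>0$; if $\alpha_n>\tfrac12$ this yields $\liminf_{s\downarrow 0}k(s)s^{1/2}=+\infty$, contradicting $\mu\in R_\xi^+$ by Example~\ref{ex-posstable}. Hence $\alpha_n\le\tfrac12$, and together with $\alpha_1\le\alpha_n$ we obtain $\alpha_1\le\big(\tfrac{2a}{\sigma^2}\wedge\tfrac12\big)$ and $\alpha_n\le\tfrac12$, as claimed; note that the two necessary conditions are obtained independently of one another.

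For the sufficiency part, assume $b_i=0$ for all $i$ and $\alpha_n\le\big(\tfrac{2a}{\sigma^2}\wedge\tfrac12\big)$. Then $a-\tfrac{\sigma^2\alpha_i}{2}\ge a-\tfrac{\sigma^2\alpha_n}{2}\ge 0$ for every $i$ and $\alpha_i+\alpha_j\le 2\alpha_n\le 1$ for all $i,j$, so in the expression for $f$ every power $u^\beta$ that occurs has exponent $\beta\in(0,1]$ and a non-negative coefficient. Each such term is the negative Laplace exponent of a subordinator --- a non-negative multiple of that of a $\beta$-stable subordinator when $\beta<1$, a non-negative drift when $\beta=1$ --- and by \cite[Corollary~3.8~(i)]{rene-book} so is their sum $f$; since moreover $f(0+)=0$, $\psi_\eta:=-f$ is the Laplace exponent of a subordinator $\eta$, and Theorem~\ref{thm-range-description}~(ii) gives $\mu=\Phi_\xi(\cL(\eta_1))\in R_\xi^+$. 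I do not expect a serious obstacle here: the substantive content is already contained in the results invoked (Theorem~\ref{thm-range-description}, Lemma~\ref{lemma-no-drift}, Example~\ref{ex-posstable}), and the only slightly delicate point is the elementary fact that the $u\downarrow 0$ asymptotics of $f$ isolate exactly the coefficient of $u^{\alpha_1}$.
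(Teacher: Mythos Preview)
Your proof is correct and follows essentially the same strategy as the paper's: compute the candidate Laplace exponent explicitly from Theorem~\ref{thm-range-description} and check when it is (minus) a Bernstein function. The only cosmetic differences are that the paper works via \eqref{eq-laplacewithoutjumps} in terms of $\psi_V$ rather than $\psi_X$, deduces $\alpha_n\le\tfrac12$ directly from the at-most-linear growth of Bernstein functions (instead of invoking Example~\ref{ex-posstable}), and obtains $\alpha_1\le\tfrac{2a}{\sigma^2}$ from $\lim_{u\to 0}f'(u)\ge 0$ rather than from $\lim_{u\to 0}u^{-\alpha_1}f(u)\ge 0$.
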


\begin{proof}
Assume $\mu = \law(V)=\law(\int_0^\infty e^{-\xi_{s-}}d\eta_s)\in R_\xi^+$ for some subordinator $\eta$.
Since $\psi_V(u)=\sum_{i=1}^n \psi_{X_i}(u)$, the drift of $V$ is $\sum_{i=1}^n b_i$.
By Lemma~\ref{lemma-no-drift}, this implies $\sum_{i=1}^n b_i=0$ and hence $b_i=0$ for all $i$.
Since each $X_i$ is positive $\alpha_i$-stable with drift 0 and non-trivial, we know from
\cite[Remarks 14.4 and 21.6]{sato} that the Laplace exponent of $X_i$ is
given by
$$\psi_{X_i}(u)= \int_{(0,\infty)} (e^{-ux}-1)\nu_{X_i}(dx)=\int_0^\infty
(e^{-ux}-1) c_i x^{-1-\alpha_i} dx$$
with $c_i> 0$.
Hence
$$\psi_V(u)= \sum_{i=1}^n \int_0^\infty (e^{-ux}-1) c_i x^{-1-\alpha_i} dx,$$
such that 
\begin{align*}
\psi_V'(u)&
= -\sum_{i=1}^n c_i u^{\alpha_i-1} \Gamma(1-\alpha_i) \quad \mbox{and}\quad
\psi_V''(u)
= \sum_{i=1}^n c_i u^{\alpha_i-2} \Gamma(2-\alpha_i), \quad u > 0.
\end{align*}
Hence \eqref{eq-laplacewithoutjumps} reads
\begin{align}
\psi_\eta (u)
&= -\sum_{i=1}^n \left[ \left(\left(a -\frac{\sigma^2}{2}\right)c_i\,\Gamma(1-\alpha_i)+
\frac{\sigma^2}{2} c_i \,\Gamma(2-\alpha_i) \right) u^{\alpha_i} \right. \nonumber \\
&  \left.  \hskip 10mm  + \sigma^2 \sum_{j=1}^{i-1} c_i c_j \Gamma(1-\alpha_i) \Gamma(1-\alpha_j)
u^{\alpha_i+\alpha_j} +\frac{\sigma^2}{2} c^2_i (\Gamma(1-\alpha_i))^2 u^{2\alpha_i}  \right]
\nonumber \\
&=: - \sum_{i=1}^n \left(A_i u^{\alpha_i} + \sum_{j=1}^{i-1}B_{i,j}u^{\alpha_i+\alpha_j}
+ C_i u^{2\alpha_i}\right)=:-f(u), \quad u > 0.
\label{eq-laplace-aux1}
\end{align}
Observe that $A_i\in\RR$, and $B_{i,j}, C_i>0$ for all $i,j$.
As the left hand side of \eqref{eq-laplace-aux1} is the Laplace
exponent of a subordinator it is the negative of a Bernstein
function \cite[Theorem 3.2]{rene-book} and thus $f(u)$, $u\geq 0$,
has to be a Bernstein function if a solution to \eqref{eq-laplace-aux1} exists.
By \cite[Corollary 3.8 (viii)]{rene-book} a Bernstein function cannot grow faster than linearly,
which yields directly that $\alpha_i \in (0,1/2]$, $i=1,\ldots,n$.
As by \cite[Definition 3.1]{rene-book} the first derivative of a Bernstein function is completely
monotone, considering $\lim_{u\to 0} f'(u) \geq 0$ we further conclude that necessarily
$A_1\geq 0$, which is equivalent to $\alpha_1\leq \frac{2a}{\sigma^2}$.

Conversely, let $V$ be a non-trivial finite convolution of positive $\alpha_i$-stable distributions with
drift 0 and $0<\alpha_1< \ldots < \alpha_n\leq (\frac{2a}{\sigma^2} \wedge \frac12 )$.
Then $A_i\geq 0$ for all $i$ and the preceding calculations show that the right hand side of
\eqref{eq-laplacewithoutjumps} is given by $f(u)$, which is the Laplace exponent of a subordinator,
namely an independent sum of positive $\alpha_i$-stable subordinators (for each $A_i\geq 0$),
$(\alpha_i+\alpha_j)$-stable subordinators (for each $B_{i,j}$), $2\alpha_i$-stable subordinators
(for each $C_i$ with $\alpha_i<\frac12$) and possibly a deterministic subordinator (if $\alpha_n=1/2$).
Hence $\law (V) \in R_\xi^+$ by Theorem \ref{thm-range-condition}.\qed
\end{proof}

As a consequence of the above theorem, we can characterize which positive $\alpha$-stable distributions are in $R_\xi^+$:

\begin{corollary} \label{cor-almostnopositivestable}  Let $\xi_t=\sigma B_t+a t$, $t\geq 0$, $a, \sigma >0$ for some standard
Brownian motion $(B_t)_{t\geq 0}$. Then a non-degenerate positive $\alpha$-stable distribution $\mu$ is in $R_\xi^+$ if and only if its drift is $0$ and $\alpha \in (0,\frac{2a}{\sigma^2}\wedge \frac{1}{2}]$. If this condition is satisfied and $\mu$ has L\'evy density $x\mapsto c x^{-1-\alpha}$ on $(0,\infty)$ with $c>0$, then
$\mu=\Phi_\xi(\law(\eta_1))$, where in the case $\alpha<1/2$, $\eta$ is a subordinator with drift 0 and L\'evy density on $(0,\infty)$ given by
$$x\mapsto c\alpha \left(  a - \frac{\sigma^2}{2} \alpha \right)  x^{-\alpha-1} + \sigma^2 c^2 \frac{\alpha (\Gamma(1-\alpha))^2}{\Gamma(1-2\alpha)} x^{-2\alpha-1},$$
and in the case $\alpha=1/2 = 2a/\sigma^2$, $\eta$ is a deterministic subordinator with drift $\sigma^2 c^2 (\Gamma(1-\alpha))^2/2$.
\end{corollary}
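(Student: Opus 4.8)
The plan is to obtain the equivalence as the $n=1$ instance of Theorem~\ref{thm-almostnopositivestable} and then to pin down $\eta$ by an explicit manipulation of Laplace exponents, reusing the computation carried out in the proof of that theorem. For the first part I would specialise Theorem~\ref{thm-almostnopositivestable} to $n=1$: there $\alpha_1=\alpha_n=\alpha$, so the two necessary conditions $\alpha_1\le(\frac{2a}{\sigma^2}\wedge\frac12)$ and $\alpha_n\le\frac12$ both reduce to $\alpha\le(\frac{2a}{\sigma^2}\wedge\frac12)$, and this is exactly the sufficient condition stated there. Hence, for a non-degenerate positive $\alpha$-stable $\mu$ (possibly carrying a drift), $\mu\in R_\xi^+$ holds precisely when its drift is $0$ and $\alpha\in(0,\frac{2a}{\sigma^2}\wedge\frac12]$, which is the first assertion.

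For the explicit description of $\eta$, assume $\mu$ has drift $0$ and L\'evy density $x\mapsto cx^{-1-\alpha}$. I would compute $\psi_V(u)=-c\,\Gamma(1-\alpha)u^{\alpha}/\alpha$ together with $\psi_V'$ and $\psi_V''$, substitute into \eqref{eq-laplacewithoutjumps} (equivalently into \eqref{eq-laplace-aux1} with $n=1$), and simplify using $\Gamma(2-\alpha)=(1-\alpha)\Gamma(1-\alpha)$ to arrive at
\[
\psi_\eta(u)=-c\,\Gamma(1-\alpha)\bigl(a-\tfrac{\sigma^2}{2}\alpha\bigr)u^{\alpha}-\tfrac{\sigma^2}{2}c^2\bigl(\Gamma(1-\alpha)\bigr)^2u^{2\alpha}.
\]
Both coefficients are nonnegative, the first because $\alpha\le 2a/\sigma^2$. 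Using the elementary identity $\int_0^\infty(e^{-ux}-1)x^{-1-\beta}\,dx=-\Gamma(1-\beta)u^{\beta}/\beta$ for $\beta\in(0,1)$, I would then, when $\alpha<1/2$, recognise $\psi_\eta$ as the Laplace exponent of the independent sum of an $\alpha$-stable and a $2\alpha$-stable positive subordinator, and match constants to read off the claimed L\'evy density of $\eta$. In the borderline case $\alpha=1/2=2a/\sigma^2$ the coefficient $a-\tfrac{\sigma^2}{2}\alpha$ vanishes and $u^{2\alpha}=u$, so $\psi_\eta$ is linear and $\eta$ is deterministic with drift $\tfrac{\sigma^2}{2}c^2(\Gamma(1-\alpha))^2$. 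In both cases Theorem~\ref{thm-range-condition} yields $\Phi_\xi(\law(\eta_1))=\mu$.

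I do not expect a real obstacle beyond Theorem~\ref{thm-almostnopositivestable}; the steps needing care are the correct bookkeeping of the Gamma-function constants, the remark that the hypothesis $\alpha\le 2a/\sigma^2$ is exactly what makes the $\alpha$-stable part of $\eta$ a legitimate (nonnegative) L\'evy density so that $\eta$ genuinely is a subordinator, and the treatment of the degenerate power $u^{2\alpha}=u$ at $\alpha=1/2$, where the $2\alpha$-stable component collapses to a drift.
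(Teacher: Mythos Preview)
Your proposal is correct and follows essentially the same route as the paper: the equivalence is the $n=1$ case of Theorem~\ref{thm-almostnopositivestable}, and the explicit $\psi_\eta$ is read off from \eqref{eq-laplace-aux1} (the paper writes the $u^\alpha$-coefficient as $(a-\tfrac{\sigma^2}{2})c\,\Gamma(1-\alpha)+\tfrac{\sigma^2}{2}c\,\Gamma(2-\alpha)$, which your simplification via $\Gamma(2-\alpha)=(1-\alpha)\Gamma(1-\alpha)$ turns into $c\,\Gamma(1-\alpha)(a-\tfrac{\sigma^2}{2}\alpha)$), after which both arguments invoke the identity $\int_0^\infty(e^{-ux}-1)x^{-1-\beta}\,dx=-\Gamma(1-\beta)u^\beta/\beta$ to extract the L\'evy density and treat $\alpha=1/2=2a/\sigma^2$ separately.
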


\begin{proof}
The equivalence is immediate from Theorem \ref{thm-almostnopositivestable}. Further,
by \eqref{eq-laplace-aux1}, we have $\Phi_\xi(\law(\eta_1)) = \mu$ where the Laplace exponent of $\eta$ is given by
$$\psi_\eta (u) =
-\left(\left(a -\frac{\sigma^2}{2}\right)c\,\Gamma(1-\alpha)+
\frac{\sigma^2}{2} c \,\Gamma(2-\alpha) \right) u^{\alpha}
 -\frac{\sigma^2}{2} c^2 (\Gamma(1-\alpha))^2 u^{2\alpha} .$$
The case $\alpha=1/2=2a/\sigma^2$ now follows immediately, and for $\alpha<1/2$ observe that
 \begin{align*}\int_0^\infty (e^{-ux}-1) x^{-1-\beta} \, dx & = \int_0^u \left(\frac{d}{dv} \int_0^\infty
 (e^{-vx}-1) x^{-1-\beta} \, dx\right) \, dv \\
  & = -\int_0^u v^{\beta-1} \Gamma(1-\beta) \, dv =
 -\frac{\Gamma(1-\beta)}{\beta} u^\beta
 \end{align*}
for $\beta \in (0,1)$ und $u>0$, which gives the desired form of the drift and  L\'evy density of $\eta$ also in this case.\qed
\end{proof}

\begin{example} \label{rem-Dufresne2}
{\rm Reconsider Example \ref{rem-Dufresne}, namely,
$$
V=\int_0^\infty e^{-(\sigma B_t+a t)} dt\overset{d}= \frac{2}{\sigma^2 \Gamma_{\frac{2a}{\sigma^2}}},
$$
where $V$ has the law of a scaled inverse Gamma distributed random variable
with parameter $\frac{2a}{\sigma^2}$. In the case that $\frac{2a}{\sigma^2}=\frac{1}{2}$, or equivalently $a=\sigma^2/4$
this is a so called L\'evy distribution and it is $1/2$-stable (cf. \cite[p. 507]{steutelvanharn}).
Reassuringly, by Corollary \ref{cor-almostnopositivestable}, $\law(V)$ is a $1/2$-stable distribution if $a=\sigma^2/4$.
}
\end{example}

\begin{corollary}
Let $\xi_t=\sigma B_t+a t$, $t\geq 0$, $\sigma, a >0$ for some standard
Brownian motion $(B_t)_{t\geq 0}$.
Then $R_\xi^+$ contains the closure of all finite convolutions of positive $\alpha$-stable distributions
with drift $0$ and $\alpha\in (0,\frac{2a}{\sigma^2} \wedge \frac12 ]$,
which is characterized as the set of infinitely divisible distributions $\mu$ with Laplace exponent
\begin{equation} \label{eq-feb8-1}
\psi(u)= \int_{(0,\frac{2a}{\sigma^2} \wedge \frac12 ]} m(d\alpha) \int_0^\infty \left(e^{-ux} - 1
\right) x^{-1-\alpha} dx \, \end{equation}
where $m$ is a measure on $(0, \frac{2a}{\sigma^2} \wedge \frac12]$ such that
\begin{equation} \label{eq-feb8-2}\int_{(0,\frac{2a}{\sigma^2} \wedge \frac12 ]} \alpha^{-1} m(d\alpha) <\infty.
\end{equation}
\end{corollary}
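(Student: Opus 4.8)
The inclusion ``$R_\xi^+$ contains the closure'' is immediate. Write $\beta := \frac{2a}{\sigma^2}\wedge\frac12$ and let $\mathcal{C}$ denote the set of all finite convolutions $\law(\sum_{k=1}^n X_k)$ in which each $X_k$ is a non-trivial positive $\alpha_k$-stable random variable with drift $0$ and $\alpha_k\in(0,\beta]$. Theorem~\ref{thm-almostnopositivestable} gives $\mathcal{C}\subseteq R_\xi^+$, and $R_\xi^+$ is closed under weak convergence by Theorem~\ref{thm-cont-inverse1}(i), so the weak closure $\overline{\mathcal{C}}$ of $\mathcal{C}$ satisfies $\overline{\mathcal{C}}\subseteq R_\xi^+$. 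The real content is the identification of $\overline{\mathcal{C}}$ with the set $\mathcal{M}$ of infinitely divisible laws whose Laplace exponent has the form \eqref{eq-feb8-1} for some measure $m$ on $(0,\beta]$ satisfying \eqref{eq-feb8-2}. The plan is to prove (a) $\mathcal{C}\subseteq\mathcal{M}$; (b) $\mathcal{M}$ is closed under weak convergence; and (c) $\mathcal{M}\subseteq\overline{\mathcal{C}}$. Then $\overline{\mathcal{C}}\subseteq\mathcal{M}$ by (a) and (b), and the reverse inclusion is (c).

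The first step is to reformulate $\mathcal{M}$. By the computation in the proof of Corollary~\ref{cor-almostnopositivestable}, $\int_0^\infty(e^{-ux}-1)x^{-1-\alpha}\,dx=-\frac{\Gamma(1-\alpha)}{\alpha}u^\alpha$ for $\alpha\in(0,1)$ and $u>0$; hence the exponent in \eqref{eq-feb8-1} equals $u\mapsto-\int_{(0,\beta]}u^\alpha\,\rho(d\alpha)$ with $\rho(d\alpha):=\frac{\Gamma(1-\alpha)}{\alpha}\,m(d\alpha)$, and since $1\leq\Gamma(1-\alpha)\leq\Gamma(\frac12)$ for $\alpha\in(0,\frac12]$, condition \eqref{eq-feb8-2} holds if and only if $\rho$ is a \emph{finite} measure on $(0,\beta]$. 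Thus $\mathcal{M}$ is exactly the set of infinitely divisible laws with Laplace exponent $u\mapsto-\int_{(0,\beta]}u^\alpha\,\rho(d\alpha)$ for some finite measure $\rho$ on $(0,\beta]$; each such function genuinely is a Laplace exponent, being a finite integral mixture of the Bernstein functions $u\mapsto u^\alpha$ which vanishes at $u=0$. Now (a) is clear, as a finite convolution corresponds to a finitely supported $\rho$. For (c), fix $\mu\in\mathcal{M}$ with associated finite $\rho$ on $(0,\beta]$ and discretise dyadically: set $I_{n,i}:=((i-1)\beta/2^n,\,i\beta/2^n]$ and $\rho^{(n)}:=\sum_{i=1}^{2^n}\rho(I_{n,i})\,\delta_{i\beta/2^n}$. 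The $\rho^{(n)}$ are finitely supported on $(0,\beta]$, so the associated laws $\mu^{(n)}$ lie in $\mathcal{C}$, and uniform continuity of $\alpha\mapsto u^\alpha$ on $[0,\beta]$ gives $-\int u^\alpha\rho^{(n)}(d\alpha)\to-\int u^\alpha\rho(d\alpha)$ for every $u>0$ (in fact locally uniformly in $u$). Since the limit is continuous at $u=0$, the continuity theorem for Laplace transforms yields $\mu^{(n)}\stackrel{w}{\to}\mu$, so $\mu\in\overline{\mathcal{C}}$; together with (a) and (b) this gives $\overline{\mathcal{C}}=\mathcal{M}$.

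The crux is (b). Let $\mu_l\in\mathcal{M}$ have Laplace exponent $-\int_{(0,\beta]}u^\alpha\rho_l(d\alpha)$ with $\rho_l$ finite on $(0,\beta]$, and let $\mu_l\stackrel{w}{\to}\mu$. Weak convergence forces $\LL_{\mu_l}(u)\to\LL_\mu(u)$ for all $u\geq0$; these Laplace transforms are strictly positive, so $\psi_l:=\log\LL_{\mu_l}\to\psi:=\log\LL_\mu$ pointwise on $(0,\infty)$, where $\psi$ is the Laplace exponent of $\mu$ (infinitely divisible as a weak limit of such) and $\psi(0+)=0$. Evaluating at $u=1$ gives $\rho_l((0,\beta])=-\psi_l(1)\to-\psi(1)<\infty$, so the $\rho_l$, regarded as measures on the compact interval $[0,\beta]$, form a bounded sequence; by weak-$\ast$ compactness pass to a subsequence with $\rho_{l_j}\stackrel{w}{\to}\rho$ on $[0,\beta]$. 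As $\alpha\mapsto u^\alpha$ is continuous and bounded on $[0,\beta]$ for each fixed $u>0$, passing to the limit yields $-\psi(u)=\int_{[0,\beta]}u^\alpha\rho(d\alpha)$ for all $u>0$. Letting $u\downarrow0$ and using $\psi(0+)=0$ together with dominated convergence on $(0,\beta]$ forces $\rho(\{0\})=0$; hence $\rho$ is carried by $(0,\beta]$ and $-\psi(u)=\int_{(0,\beta]}u^\alpha\rho(d\alpha)$, i.e. $\mu\in\mathcal{M}$. The subtle point — and the one to handle with care — is the possible escape of mass of $\rho_l$ toward $\alpha=0$, which would correspond to the Laplace exponent of $\mu$ acquiring a spurious additive constant; this is ruled out precisely by the continuity of $\psi$ at the origin.
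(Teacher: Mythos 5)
Your proof is correct, and its skeleton coincides with the paper's: let $M_1$ be the finite convolutions, $M_2$ their weak closure and $M_3$ the mixture class defined by \eqref{eq-feb8-1}--\eqref{eq-feb8-2}; show $M_1\subset M_3$, $M_3$ closed, $M_3\subset M_2$, and conclude $M_2\subset R_\xi^+$ from Theorems \ref{thm-almostnopositivestable} and \ref{thm-cont-inverse1}(i). Where you genuinely differ is in how the two nontrivial facts about $M_3$ are established. The paper does not prove them directly: it invokes Sato's theory of the class $L_\infty$ \cite{sato-classL}, deducing closedness of $M_3$ from Theorem 4.1 there together with closedness of $L_\infty(\bR)$, and settling $M_3\subset M_2$ with the remark that it can be shown ``in complete analogy'' to the proof of Theorem 3.5 of that paper. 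You instead argue from scratch: rewriting the exponent as $-\int_{(0,\beta]}u^\alpha\,\rho(d\alpha)$ with $\rho$ finite (the bound $1\leq\Gamma(1-\alpha)\leq\Gamma(1/2)$ on $(0,1/2]$ correctly identifies \eqref{eq-feb8-2} with finiteness of $\rho$), you get uniform boundedness of the mixing measures by evaluating the exponents at $u=1$, extract a weak-$\ast$ limit on the compact interval $[0,\beta]$, and rule out escape of mass to $\alpha=0$ via continuity of the limiting Laplace exponent at the origin --- which is precisely the point hidden in the paper's citation; for $\mathcal{M}\subseteq\overline{\mathcal{C}}$ you discretize $\rho$ dyadically and apply the continuity theorem for Laplace transforms. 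Your route is longer but elementary and self-contained, and it makes the compactness/no-mass-at-zero mechanism explicit; the paper's is shorter and places the class inside the known $L_\infty$ framework, from which the characterization is inherited. One harmless edge case to note: if $\rho=0$ your approximants are $\delta_0$, which is not itself a finite convolution of non-trivial stable laws, but it lies in the closure anyway (e.g.\ as a limit of positive stable laws with scale tending to $0$), so nothing is lost.
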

\begin{proof}
Denote by $M_1$ the class of all finite convolutions of positive $\alpha$-stable
distributions with drift 0 and $\alpha\in (0,\frac{2a}{\sigma^2} \wedge \frac12 ]$, by $M_2$ its closure with respect to weak convergence, and by $M_3$ the class of all positive distributions on $\bR$ whose characteristic exponent can be represented in the form \eqref{eq-feb8-1} with $m$ subject to \eqref{eq-feb8-2}. We show that $M_2=M_3$, then since  $M_2\subset R_\xi^+$ by
Theorems \ref{thm-almostnopositivestable} and \ref{thm-cont-inverse1} (i), this implies the statement. To see $M_2\subset M_3$, denote by $L_\infty(\bR)$ the closure of
all finite convolutions of stable distributions on $\bR$ (cf. \cite[Theorem 3.5]{sato-classL}, where $L_\infty(\bR)$ is defined differently, but shown to be equivalent to this definition). Using the fact that $L_\infty(\bR)$ is closed, it then follows easily from \cite[Theorem 4.1]{sato-classL} that also $M_3$ is closed under weak convergence. Since obviously $M_1\subset M_3$ (take $m$ to be a measure supported on a finite set), we also have $M_2\subset M_3$. Conversely, $M_3\subset M_2$ can be shown in complete analogy to the proof of \cite[Theorem 3.5]{sato-classL}. \qed
\end{proof}

\begin{remark}
{\rm  From the proof of Theorem \ref{thm-almostnopositivestable} it is possible to obtain a necessary and sufficient condition for a finite convolution of positive, stable distributions to be in $R_\xi^+$. Indeed if the $X_i$ are such that $\psi_{X_i}(u)=-c_i u^{\alpha_i}$ with $c_i>0$ and $\alpha_i \in (0,1)$, then $\mu=\cL(\sum_{i=1}^n X_i)$ is in $R_\xi^+$ if and only if the function $f$ defined by \eqref{eq-laplace-aux1} is a Bernstein function.
 After ordering the indices, the function $f$ can be written as $\sum_{i=1}^m D_i u^{\gamma_i}$ with $0<\gamma_1< \ldots \gamma_m<2$ and coefficients $D_i \in \bR\setminus \{0\}$. Since
 $$ \sum_{i=1,\ldots, m;  \gamma_i < 1} D_i u^{\gamma_i} = \int_0^\infty (1-e^{-ux}) \sum_{i=1,\ldots, m; \gamma_i< 1} \frac{D_i \gamma_i}{\Gamma(1-\gamma_i)} x^{-1-\gamma_i} \, dx$$ as seen in the proof
 of Corollary \ref{cor-almostnopositivestable}, it follows from \cite[Corollary 3.8(viii)]{rene-book} and \cite[Example  12.3]{sato} that $f$ is a Bernstein function if and only if $\gamma_m\leq 1$, $D_m\geq 0$ and
 $$\sum_{i=1,\ldots, m; \gamma_i < 1} \frac{D_i \gamma_i}{\Gamma(1-\gamma_i)} x^{-1-\gamma_i}\geq 0, \quad \forall\; x > 0.$$}
 \end{remark}

\section*{Acknowledgement}
We would like to thank the anonymous referee for valuable suggestions which helped to improve the exposition of the manuscript. 
Makoto Maejima's research was partially supported by JSPS Grand-in-Aid for Science Research 22340021. 
%

\end{document}